\setlist{labelindent=1pt,itemsep=0.1cm}
\setlist[itemize]{leftmargin=0.7cm}
\setlist[enumerate]{itemindent=0em,leftmargin=0.7cm}
\DeclareMathOperator{\esssup}{ess\;sup}
\begin{document}
\title*{Volterra integral operators and general linear integral operators representing polynomial covariance type commutation relations on $L_p$ spaces}
\titlerunning{Integral operators representing commutation relations }        % if too long for running head

\author{Domingos Djinja \and Sergei Silvestrov \and Alex Behakanira Tumwesigye}
\authorrunning{D. Djinja, S. Silvestrov, A. B. Tumwesigye} % if too long for running head

\institute{Domingos Djinja \at
Department of Mathematics and Informatics, Faculty of Sciences, Eduardo Mondlane University, Box 257, Maputo, Mozambique \\
\email{domingos.djindja@uem.ac.mz}
\at
Division of Mathematics and Physics, School of Education, Culture and Communication, M{\"a}lardalen University, Box 883, 72123 V{\"a}ster{\aa}s, Sweden \\
\email{domingos.celso.djinja@mdu.se}
\and
Sergei Silvestrov
\at Division of Mathematics and Physics, School of Education, Culture and Commu\-nication, M{\"a}lardalen University, Box 883, 72123 V{\"a}ster{\aa}s, Sweden. \\
\email{sergei.silvestrov@mdu.se}
\and
Alex Behakanira Tumwesigye \at
Department of Mathematics, College of Natural Sciences, Makerere University, Box 7062, Kampala, Uganda. \email{alex.tumwesigye@mak.ac.ug}
}

%\date{Received: date / Accepted: date}
% The correct dates will be entered by the editor

%\index[aut]{Djinja, Domingos}
%\index[aut]{Silvestrov, Sergei}
%\index[aut]{Tumwesigye, Alex Behakanira}

\maketitle
\label{chap:DjinjaTumwSilvestVolterraintopscomrel}

\abstract*{
Conditions for linear integral operators on $L_p$ over measure spaces to satisfy the polynomial covariance type commutation relations are described in terms of defining kernels of the corresponding integral operators. Representation by integral operators are studied both for general polynomial covariance commutation relations and for important classes of polynomial covariance commutation relations associated to arbitrary monomials and to affine functions.
Representations of the covariance type commutation relations by integral Volterra operators and integral convolution operators are investigated.
\keywords{operators, covariance commutation relations, convolution, Volterra operator}\\
{\bf MSC 2020:} 47A62, 47L80, 47L65, 47G10, 45D05}

\abstract{
Conditions for linear integral operators on $L_p$ over measure spaces to satisfy the polynomial covariance type commutation relations are described in terms of defining kernels of the corresponding integral operators. Representation by integral operators are studied both for general polynomial covariance commutation relations and for important classes of polynomial covariance commutation relations associated to arbitrary monomials and to affine functions.
Representations of the covariance type commutation relations by integral Volterra operators and integral convolution operators are investigated.
\keywords{integral operators, covariance commutation relations, convolution, Volterra operator}\\
{\bf MSC 2020:} 47A62, 47L80, 47L65, 47G10, 45D05}

\section{Introduction}
Commutation relations \index{commutation relation} of the form
\begin{equation} \label{Covrelation}
  AB=B F(A)
\end{equation}
where $A, B$ are elements of an associative algebra and  $F$ is a function of the elements of the algebra, are important in many areas of Mathematics and its applications. Such commutation relations are usually called covariance relations, crossed product relations or semi-direct product relations. Elements of an algebra that satisfy \eqref{Covrelation} are called a representation of this relation.  Representations of covariance commutation relations \eqref{Covrelation} by linear operators are important for the study of actions and induced representations of groups and semigroups, crossed product operator algebras, dynamical systems, harmonic analysis, wavelets and fractals analysis and, hence have applications in physics and engineering
\cite{BratJorgIFSAMSmemo99,BratJorgbook,JorgWavSignFracbook,JorgOpRepTh88,JorMoore84,MACbook1,MACbook2,MACbook3,OstSambook,Pedbook79,Samoilenkobook}.

A description of the structure of representations for the relation \eqref{Covrelation} and more general families of self-adjoint operators satisfying such relations by bounded and unbounded self-adjoint linear operators on a Hilbert space using reordering formulas for functions of the algebra elements and operators satisfying covariance commutation relation, functional calculus and spectral representation of operators and interplay with dynamical systems generated by iteration of maps involved in the commutation relations have been considered in  \cite{BratEvansJorg2000,CarlsenSilvExpoMath07,CarlsenSilvAAM09,CarlsenSilvProcEAS10,DutkayJorg3,DJS12JFASilv,DLS09,DutSilvProcAMS,DutSilvSV,JSvT12a,JSvT12b,Mansour16,JMusondaPhdth18,JMusonda19,Musonda20,Nazaikinskii96,OstSambook,PerssonSilvestrov031,PerssonSilvestrov032,PersSilv:CommutRelDinSyst,RST16,RSST16,Samoilenkobook,SaV8894,SilPhD95,STomdynsystype1,SilWallin96,SvSJ07a,SvSJ07b,SvSJ07c,SvT09,Tomiyama87,Tomiama:SeoulLN1992,Tomiama:SeoulLN2part2000,AlexThesis2018,VaislebSa90}.

Constructions of representations of polynomial covariance commutations relations by pairs of  linear integral operators with general kernels, linear integral operators with separable kernels and linear multiplication operator  defined on some Banach spaces have been considered in \cite{DjinjaEtAll_IntOpOverMeasureSpaces,DjinjaEtAll_LinItOpInGenSepKern,DjinjaEtAll_LinMultIntOp}.

Various properties of linear Volterra integral operators have been considered in \cite{BarnesBruceVolterraOperator,EvesonNormOfVolterraOperator,GohbergKrein197067bookVolterraOpsHilbSp,KarshawOperatorNormsOfPowerVoltOp,LefevreVolterraOperatorStrictlySingular}.

In this article, we construct representations of the covariance commutation relations \eqref{Covrelation} by linear integral operators on Banach spaces $L_p$ over measure spaces, in particular, linear Volterra integral operators in
$L_p$ over the standard Lebesgue measure space in a closed interval on the real line. We derive conditions on such kernel functions so that the corresponding operators satisfy \eqref{Covrelation} for polynomial $F$ when both operators  are of linear integral type. Representation by integral operators are studied both for general polynomial covariance commutation relations and for important classes of polynomial covariance commutation relations associated to arbitrary monomials and to affine functions.
We review and improve formulations for conditions obtained in \cite{DjinjaEtAll_IntOpOverMeasureSpaces} for integral operators defined by general kernels and by convolution type kernels to satisfy the covariance type commutation relations. For convolution type kernels, these conditions are stated in terms of the zero devisors of convolution multiplication using some results from \cite{Doss1990ADS,TitchmarshDSA1929}.
Furthermore, we study the representations of commutation relations by Volterra integral operators.
We derive necessary conditions for linear Volterra type integral operators to satisfy the commutation relation \eqref{Covrelation} for  separable kernel with simple functions. We also derive sufficient conditions for linear Volterra type integral operators with separable kernels to satisfy the general commutation relation \eqref{Covrelation}.
We also derive necessary and sufficient conditions for linear Volterra type integral operators with kernels in general form to satisfy
the quantum plane relations and commutativity relations.

This paper is organized in three sections. After the introduction,  we present in Section \ref{SecPreNot} preliminaries, notations, basic definitions and two useful lemmas.
In Section \ref{SecRepreBothLI}, we present representations when both operators $A$ and $B$ are linear integral operators  acting on the Banach spaces $L_p$. In particular, we consider cases when operators are convolution type. Finally, in Section \ref{SecVoltOpRepre}  we construct representations for commutation relation \eqref{Covrelation} by linear Volterra integral operators.

\section{Preliminaries and notations}\label{SecPreNot}
In this section we present preliminaries, basic definitions and notations for this article. For further details, please read \cite{AdamsG,AkkLnearOperators,BrezisFASobolevSpaces,FollandRA,Kantarovitch,Kolmogorov,KolmogorovVol2,RudinRCA}.

Let $\mathbb{R}$ be the set of all real numbers,  $ X$ be a non-empty set. Let $(X,\Sigma, \mu)$ be a $\sigma$-finite measure space, where $\Sigma$ is a $\sigma-$algebra with measurable subsets of $X$,
and $X$ can be covered with at most countable many disjoint sets $E_1,E_2,E_3,\ldots$ such that $ E_i\in \Sigma, \,
\mu(E_i)<\infty$, $i=1,2,\ldots$  and $\mu$ is a measure. A function $f:X\to \mathbb{R}$ defined in a measure space $(X,\Sigma,\mu)$ is said to be simple if it is $\mu$-measurable and takes no more than a countable set of values.
For $1\leq p<\infty,$ we denote by $L_p(X,\mu)$, the set of all classes of equivalent (different on a set of zero measure)
measurable functions $f:X\to \mathbb{R}$ such that
$\int\limits_{X} |f(t)|^p d\mu < \infty.$
This is a Banach space  with norm
$\| f\|_p= \left( \int\limits_{X} |f(t)|^p d\mu \right)^{\frac{1}{p}}.$
We denote by $L_\infty(X,\mu)$ the set of all classes of equivalent measurable functions $f:X\to \mathbb{R}$ such that exists $\lambda>0$,
$|f(t)|\le \lambda$
almost everywhere. This is a Banach space with norm
$\displaystyle \|f\|_{\infty}=\mathop{\esssup}_{t\in X} |f(t)|.$
The support of a function $f:\, X\to\mathbb{R}$ is  ${\rm supp }\, f = \{t\in X \colon \, f(t)\not=0\}.$
We will use notation
\begin{equation*} %\label{QGpairingDefinition}
  Q_{\Lambda}(u,v)=\int\limits_{\Lambda} u(t)v(t)d\mu
\end{equation*}
for $\Lambda\in \Sigma$ and such functions $u,v:\, \Lambda\to \mathbb{R} $ that integral exists and is finite.
The convolution of functions $f:\, \mathbb{R}\to\mathbb{R}$ and $g:\, \mathbb{R}\to\mathbb{R}$ is defined by \\
$
(f\star g)(t)=\int\limits_{-\infty}^{+\infty} f(t-\tau)g(\tau)d\tau.
$

\begin{theorem}[Young \cite{BrezisFASobolevSpaces}]\label{YoungThmConvolution}
  Let $(\mathbb{R},\Sigma,\mu)$ be the standard Lebesgue measure space in $\mathbb{R}$.
  Let $f\in L_1(\mathbb{R},\mu)$ and let $g\in L_p(\mathbb{R},\mu)$, with $1\leq p \leq \infty$. Then
  for almost every $t\in \mathbb{R}$ the function $s\mapsto f(t-s)g(s) $ is integrable in $\mathbb{R}$
  and we define  $(f\star g)(t)=\int\limits_{-\infty}^{+\infty} f(t-s)g(s)ds$.
  In addition $f\star g\in L_p(\mathbb{R})$ and $\|f\star g\|_{L_p}\leq \|f\|_{L_1}\|g\|_{L_p}$.
\end{theorem}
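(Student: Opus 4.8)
The plan is to handle the two endpoint exponents $p=1$ and $p=\infty$ directly, and then to derive the general case $1<p<\infty$ by the classical Hölder splitting argument. The common measure-theoretic ingredient is that the function $(t,s)\mapsto f(t-s)g(s)$ is measurable on $\mathbb{R}^2$ with respect to the product $\sigma$-algebra: the map $(t,s)\mapsto t-s$ is continuous, hence measurable, so $(t,s)\mapsto f(t-s)$ is measurable, and multiplication by the measurable map $(t,s)\mapsto g(s)$ keeps it measurable. Consequently, Tonelli's theorem applies to the nonnegative function $(t,s)\mapsto |f(t-s)|\,|g(s)|$, and this will simultaneously yield the a.e. integrability of the sections $s\mapsto f(t-s)g(s)$, the measurability of $f\star g$, and the norm bound.

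For $p=1$, by Tonelli's theorem and translation invariance of Lebesgue measure,
\[
\int_{\mathbb{R}}\left(\int_{\mathbb{R}}|f(t-s)|\,|g(s)|\,ds\right)dt
= \int_{\mathbb{R}}|g(s)|\left(\int_{\mathbb{R}}|f(t-s)|\,dt\right)ds
= \|f\|_{L_1}\,\|g\|_{L_1}<\infty .
\]
Finiteness of the iterated integral forces the inner integral $\int_{\mathbb{R}}|f(t-s)|\,|g(s)|\,ds$ to be finite for a.e. $t$, so $s\mapsto f(t-s)g(s)$ is integrable for a.e. $t$ and $(f\star g)(t)$ is well defined; Fubini's theorem then gives measurability of $f\star g$, and $\|f\star g\|_{L_1}\le\|f\|_{L_1}\|g\|_{L_1}$. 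For $p=\infty$, for a.e. $t$ one has $\int_{\mathbb{R}}|f(t-s)|\,|g(s)|\,ds\le\|g\|_{L_\infty}\int_{\mathbb{R}}|f(t-s)|\,ds=\|g\|_{L_\infty}\|f\|_{L_1}<\infty$, which gives integrability of the section and $|(f\star g)(t)|\le\|f\|_{L_1}\|g\|_{L_\infty}$ a.e., i.e. $\|f\star g\|_{L_\infty}\le\|f\|_{L_1}\|g\|_{L_\infty}$.

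For $1<p<\infty$, let $q$ be the conjugate exponent, $\tfrac1p+\tfrac1q=1$. Writing $|f(t-s)|\,|g(s)|=|f(t-s)|^{1/q}\cdot\bigl(|f(t-s)|^{1/p}|g(s)|\bigr)$ and applying Hölder's inequality in the variable $s$,
\[
\int_{\mathbb{R}}|f(t-s)|\,|g(s)|\,ds
\le\left(\int_{\mathbb{R}}|f(t-s)|\,ds\right)^{1/q}\left(\int_{\mathbb{R}}|f(t-s)|\,|g(s)|^p\,ds\right)^{1/p}
=\|f\|_{L_1}^{1/q}\left(\int_{\mathbb{R}}|f(t-s)|\,|g(s)|^p\,ds\right)^{1/p}.
\]
Raising to the $p$-th power, integrating over $t\in\mathbb{R}$, and applying Tonelli's theorem to $(t,s)\mapsto|f(t-s)|\,|g(s)|^p$ (using $|g|^p\in L_1$) together with the identity $\tfrac pq+1=p$ yields
\[
\int_{\mathbb{R}}\left(\int_{\mathbb{R}}|f(t-s)|\,|g(s)|\,ds\right)^{p}dt
\le\|f\|_{L_1}^{p/q}\,\|f\|_{L_1}\,\|g\|_{L_p}^{p}=\|f\|_{L_1}^{p}\,\|g\|_{L_p}^{p}<\infty .
\]
As before, the finiteness of this quantity shows the sections are integrable for a.e. $t$, so $f\star g$ is defined a.e. and measurable, and taking $p$-th roots gives $\|f\star g\|_{L_p}\le\|f\|_{L_1}\|g\|_{L_p}$.

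The main obstacle is not any individual estimate — the Hölder splitting and the two endpoint computations are short — but the bookkeeping needed to turn the pointwise and iterated-integral bounds into the claimed almost-everywhere statements; once the joint measurability of $(t,s)\mapsto f(t-s)g(s)$ is recorded and Tonelli's and Fubini's theorems are invoked, everything else is routine.
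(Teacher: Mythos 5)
Your proof is correct. Note, however, that the paper does not prove this statement at all: it is quoted as a known result (Young's theorem) with attribution to Brezis's book, and is used later only as a black box to justify condition \eqref{ConditionsRepreRn} for convolution kernels. So there is no in-paper argument to compare against; what you have written is essentially the standard textbook proof (the one in Brezis), and it is sound: the endpoint cases $p=1$ and $p=\infty$ follow from Tonelli plus translation invariance, and the case $1<p<\infty$ follows from the H\"older splitting $|f(t-s)|\,|g(s)|=|f(t-s)|^{1/q}\cdot\bigl(|f(t-s)|^{1/p}|g(s)|\bigr)$ together with $\tfrac{p}{q}+1=p$. One small point worth recording explicitly: the claim that $(t,s)\mapsto f(t-s)$ is measurable does not follow merely from ``composition of a measurable function with a continuous map,'' since that composition order does not in general preserve Lebesgue measurability; the standard fix is to choose a Borel representative of $f$, for which $(t,s)\mapsto f(t-s)$ is Borel, and to observe that the preimage under $(t,s)\mapsto t-s$ of a Lebesgue null set is planar null (by Fubini), so the choice of representative is immaterial. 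With that one-line repair the argument is complete.
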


%We denote by $C[\alpha,\beta]$ the set of all continuous functions $f:[\alpha,\beta]\to \mathbb{R}$. This is a Banach
%space with norm
%\begin{gather*}
%  \|f\|=\max_{t\in [\alpha,\beta]} |f(t)|.
%\end{gather*}
%
%We denote by $C^{\infty}[\alpha,\beta]$ the set of all infinitely differentiable functions  $m$ inclusive $f:[\alpha,\beta]\to %\mathbb{R}$.  This is a Banach
%space with norm of $C[\alpha,\beta]$.
%\section{Some general and specific representations on $L_p$}

Now we will consider two useful lemmas for  integral operators which will be used throughout the article. Lemma \ref{lemEqIntForAllLpFunctFinitMeasureA} is used
in the proof of Theorem \ref{thmBothIntOPKernelsGen} and Lemma \ref{lemEqIntForAllLpFunctInfinitSets} is used in the proof of Theorem \ref{thmBothIntOPKernelsGenRn}. Lemma \ref{lemEqIntForAllLpFunctFinitMeasureA} is a generalization of a lemma in \cite{DjinjaEtAll_LinMultIntOp} which consider $X=[\alpha,\beta]$, $\alpha<\beta$, $\alpha,\beta\in\mathbb{R}$.

\begin{lemma}\label{lemEqIntForAllLpFunctFinitMeasureA}
Let $(X,\Sigma,\mu)$ be a $\sigma$-finite measure space. Let $G_1,G_2\in \Sigma$, $\mu(G_1)<\infty$, $\mu(G_2)<\infty$ and $f:G_1\to \mathbb{R}$, $g:G_2\to \mathbb{R}$ be two measurable functions such that for some $1\leq p \leq \infty$ and all $x\in L_p(X,\mu)$, the integrals
\begin{equation*}
  \int\limits_{G_1} f(t)x(t)d\mu,\quad \int\limits_{G_2} g(t)x(t)d\mu,
\end{equation*}
exist and are finite. Set
$
  G=G_1\cap G_2.
$
 Then the following statements are equivalent\textup{:}
\begin{enumerate}[label=\textup{\arabic*.}, ref=\arabic*]
  \item \label{LemmaFiniteMeasureEqLp:cond1} For some $1\le p \le \infty$ and all $x\in L_p(X,\mu)$,
\begin{equation*} % \label{EqInteDiffIntervalsLp}
 Q_{G_1}(f,x)= \int\limits_{G_1} f(t)x(t)d\mu=\int\limits_{G_2} g(t)x(t)d\mu=Q_{G_2}(g,x).
\end{equation*}
\item \label{LemmaFiniteMeasureEqLp:cond2} The following conditions hold\textup{:}
    \begin{enumerate}[label=\textup{\alph*)}]
      \item for almost every $t\in G$, $f(t)=g(t)$,
      \item for almost every $t \in G_1\setminus G,\ f(t)=0,$
      \item  for almost every $t \in G_2\setminus G,\ g(t)=0.$
    \end{enumerate}
\end{enumerate}
\end{lemma}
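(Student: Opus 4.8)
The plan is to prove the two implications separately, with $\ref{LemmaFiniteMeasureEqLp:cond2}\Rightarrow\ref{LemmaFiniteMeasureEqLp:cond1}$ being routine and $\ref{LemmaFiniteMeasureEqLp:cond1}\Rightarrow\ref{LemmaFiniteMeasureEqLp:cond2}$ relying on testing against characteristic functions of finite-measure sets. A preliminary remark that I would make first is that, since $\mu(G_1)<\infty$ and $\mu(G_2)<\infty$, every measurable subset $E\subseteq G_1\cup G_2$ has finite measure, so $\chi_E\in L_p(X,\mu)$ for \emph{every} $p\in[1,\infty]$; this is the only place the finiteness hypotheses are used, and it guarantees a supply of test functions uniformly in $p$. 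In particular, applying the assumption to $\chi_E$ with $E=\{t\in G_1:f(t)>0\}$ and $E=\{t\in G_1:f(t)<0\}$ shows $\int_{G_1}|f|\,d\mu<\infty$, i.e. $f\in L_1(G_1,\mu)$, and likewise $g\in L_1(G_2,\mu)$.

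For $\ref{LemmaFiniteMeasureEqLp:cond2}\Rightarrow\ref{LemmaFiniteMeasureEqLp:cond1}$: fix $x\in L_p(X,\mu)$. The sets $G$, $G_1\setminus G$, $G_2\setminus G$ are measurable of finite measure, and $G_1=G\sqcup(G_1\setminus G)$, $G_2=G\sqcup(G_2\setminus G)$. Since $Q_{G_1}(f,x)$ and $Q_{G_2}(g,x)$ exist and are finite, they split along these decompositions. By condition b) the integral of $f x$ over $G_1\setminus G$ vanishes, by condition c) the integral of $g x$ over $G_2\setminus G$ vanishes, and by condition a) the integrals of $f x$ and $g x$ over $G$ coincide. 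Combining these, $Q_{G_1}(f,x)=\int_G f x\,d\mu=\int_G g x\,d\mu=Q_{G_2}(g,x)$, which is condition \ref{LemmaFiniteMeasureEqLp:cond1}.

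For $\ref{LemmaFiniteMeasureEqLp:cond1}\Rightarrow\ref{LemmaFiniteMeasureEqLp:cond2}$: extend $f$ by zero on $X\setminus G_1$ to get $\tilde f\in L_1(X,\mu)$ and extend $g$ by zero on $X\setminus G_2$ to get $\tilde g\in L_1(X,\mu)$, and set $h=\tilde f-\tilde g\in L_1(X,\mu)$, noting ${\rm supp}\,h\subseteq G_1\cup G_2$ has finite measure. Condition \ref{LemmaFiniteMeasureEqLp:cond1} is exactly $\int_X h(t)x(t)\,d\mu=0$ for all $x\in L_p(X,\mu)$. Testing against $x=\chi_{E_+}$ with $E_+=\{t\in G_1\cup G_2: h(t)>0\}$, which lies in $L_p$ by the preliminary remark, forces $\mu(E_+)=0$; testing against $\chi_{E_-}$ with $E_-=\{t\in G_1\cup G_2: h(t)<0\}$ forces $\mu(E_-)=0$. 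Hence $h=0$ $\mu$-a.e., i.e. $\tilde f=\tilde g$ $\mu$-a.e. on $X$.

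Finally I would read off the three conclusions from $\tilde f=\tilde g$ $\mu$-a.e.: on $G=G_1\cap G_2$ the extensions coincide with $f$ and $g$ respectively, giving a); on $G_1\setminus G\subseteq X\setminus G_2$ one has $\tilde g=0$ and $\tilde f=f$, giving b); on $G_2\setminus G\subseteq X\setminus G_1$ one has $\tilde f=0$ and $\tilde g=g$, giving c). The only subtlety, and the step I would be most careful about, is the one already flagged — ensuring the relevant characteristic functions genuinely lie in $L_p(X,\mu)$ for the fixed $p$ in the statement — but this follows immediately from $\mu(G_1),\mu(G_2)<\infty$, so I do not anticipate any real obstacle.
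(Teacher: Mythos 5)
Your proof is correct, and the forward implication $\ref{LemmaFiniteMeasureEqLp:cond2}\Rightarrow\ref{LemmaFiniteMeasureEqLp:cond1}$ coincides with the paper's. For the converse the underlying idea is the same as the paper's (test the identity against indicator-type functions, which lie in $L_p$ for every $p$ because $\mu(G_1),\mu(G_2)<\infty$), but your organization is genuinely different and arguably tighter. The paper argues set by set: it first plugs in $I_{G_1\cup G_2}$ and $I_{G_1\setminus G}$, then deduces $f=0$ a.e.\ on $G_1\setminus G$ by running over ``arbitrary partitions'' $\{S_i\}$ of $G_1\setminus G$ into sets of positive measure (a step that is stated somewhat loosely), repeats this for $g$ on $G_2\setminus G$, and finally handles $G$ by testing against the sign function of $f-g$. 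You instead extend $f$ and $g$ by zero, reduce the whole of condition \ref{LemmaFiniteMeasureEqLp:cond1} to the single statement $\int_X h\,x\,d\mu=0$ for $h=\tilde f-\tilde g\in L_1(X,\mu)$, and kill $h$ in one stroke by testing against the indicators of its positivity and negativity sets; conditions a)--c) then all drop out of $\tilde f=\tilde g$ a.e. This buys you two things the paper does not make explicit: a clean justification that $f\in L_1(G_1)$ and $g\in L_1(G_2)$ (needed to split and recombine the integrals), and a replacement of the vague partition argument by the standard positivity/negativity-set test. The only point to keep an eye on, which you already handle, is that passing from $\int\tilde f x=\int\tilde g x$ to $\int(\tilde f-\tilde g)x=0$ uses the finiteness of both integrals, which is part of the hypothesis.
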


\begin{proof}
\noindent \ref{LemmaFiniteMeasureEqLp:cond2}$\Rightarrow$\ref{LemmaFiniteMeasureEqLp:cond1} \
By additivity of the measure of integration $\mu$ on $\Sigma$,
\begin{multline*}
\textstyle \int\limits_{G_1} f(t)x(t)d\mu = \int\limits_{G_1\setminus G} f(t)x(t)d\mu + \int\limits_{G} f(t)x(t)d\mu = \int\limits_{G} f(t)x(t)d\mu \\
\textstyle = \int\limits_{G} g(t)x(t)d\mu =  \int\limits_{G_2\setminus G} g(t)x(t)d\mu + \int\limits_{G} g(t)x(t)d\mu  =   \int\limits_{G_2} g(t)x(t)d\mu.
\end{multline*}
    %For $1.\,\longrightarrow \,2.$ we have the following.
\noindent \ref{LemmaFiniteMeasureEqLp:cond1}$\Rightarrow$\ref{LemmaFiniteMeasureEqLp:cond2}\  For
     the indicator function $x(t)=I_{H_1}(t)$ of the set $H_1=G_1\cup G_2$,
\[ \textstyle \int\limits_{G_1} f(t)x(t)d\mu=\int\limits_{G_2} g(t)x(t)d\mu=
\int\limits_{G_1} f (t)d\mu=\int\limits_{G_2} g (t)d\mu =\eta,
\]
where $\eta$ is a constant. Now by taking $x(t)=I_{G_1\setminus G}$ we get
\[ \textstyle
\int\limits_{G_1} f(t)x(t)d\mu=\int\limits_{G_2} g(t)x(t)d\mu= \int\limits_{G_1\setminus G} f (t)d\mu=\int\limits_{G_2} g (t)\cdot 0d\mu =0.
\]
Then
    $
      \int\limits_{G_1\setminus G} f (t)d\mu =0.
    $
    Analogously by taking $x(t)=I_{G_2\setminus G}(t)$ we get
    $
      \int\limits_{G_2\setminus G} g (t)d\mu=0.
    $
    We claim that $f(t)=0$ for almost every $t\in G_1\setminus G$ and
    $g(t)=0$ for almost every $t\in G_2\setminus G$. We take an arbitrary partition of the set  $G_1\setminus G=\bigcup S_i$
    such that  $S_i\cap S_j=\emptyset$, for $i\not=j$ and each set $S_i$ has positive measure. For each $x(t)=I_{S_i}(t)$ we have
    $
    \int\limits_{G_1} f(t)x(t)d\mu=\int\limits_{G_2} g(t)x(t)d\mu=
    \int\limits_{S_i} f (t)d\mu=\int\limits_{G_2} g (t)\cdot 0d\mu =0.
    $
    Thus, for each $S_i$,
    $
          \int\limits_{S_i} f (t)d\mu=0.
    $
    Since we can choose arbitrary partition with positive measure on each of its elements,
    $
      f(t)=0 \ \mbox{ for almost every } t\in G_1\setminus G.
    $
    Analogously,
    $
      g(t)=0 \  \mbox{ for almost every } t\in G_2\setminus G.
    $
    Therefore
    $
      \eta = \int\limits_{G_1} f (t)d\mu=\int\limits_{G_2} g (t)d\mu =\int\limits_G f (t)d\mu=\int\limits_G g (t)d\mu.
    $
    Then, for all function $x\in L_p(X,\mu)$ we have
    $
      \int\limits_G f (t)x(t)d\mu=\int\limits_G g (t)x(t)d\mu \Leftrightarrow   \int\limits_G [f(t)-g(t)]x(t)d\mu=0.
    $
    By taking $x(t)=\left\{\begin{array}{cc} 1, & \mbox{if } f(t)-g(t)>0, \\ -1,  & \mbox{ if } f(t)-g(t)<0,\,   \end{array}\right.$
    almost every $t\in G$ and $x(t)=0$ for almost every $t\in X\setminus G$, we get
    $\int_{G} |f(t)-g(t)|dt =0$.
    This implies that $f(t)=g(t)$ for almost every $t\in G$.
    \qed
  \end{proof}

%Let $n$ be a positive integer, $(\mathbb{R}^n,\Sigma,\mu)$ be the standard Lebesgue measure space and $\Omega \in \Sigma$.
%We denote by $C(\Omega)$ the set of all continuous functions $f:\Omega\to \mathbb{R}$. This is a Banach
% space with norm
%$
%  \|f\|=\max_{t\in \Omega} |f(t)|.
%$
%We denote by $C_c(\mathbb{R}^n)$ the set of all continuous functions with compact support.

%In the case $X=\mathbb{R}^n$, to be more precise, we define support of a function as follows.
%\begin{definition}
%Let  $f:\, \mathbb{R}^n\to\mathbb{R}$ be a function.
%Consider the family of open sets in $\mathbb{R}^n$, $\{ \Lambda_i\}_{i\in I}$, where $I$ is a set of indexes, such that $f=0$ almost everywhere on $\Lambda_i$, $i\in I$.
%Set $\Lambda=\bigcup_{i\in I} \Lambda_i$. The support of $f$ is the complement of the set $\Lambda$ and we denote it by
%$
%\supp{\rm supp }\, \, f.
%$
% \end{definition}
%Note that this is not

The following statement is similar to Lemma \ref{lemEqIntForAllLpFunctFinitMeasureA} under conditions that  $X=\mathbb{R}^l$, $f,g\in L_q(\mathbb{R}^l,\mu)$, $1<q\leq\infty$ and  the sets $G_1$, $G_2$ that can have infinite measure.
It was proved in \cite{DjinjaEtAll_LinItOpInGenSepKern}. %This lemma will be used in Theorem \ref{thmBothIntOPKernelsGenRn}.
\begin{lemma}[\hspace{-0.1mm}\cite{DjinjaEtAll_LinItOpInGenSepKern}]\label{lemEqIntForAllLpFunctInfinitSets}
Let  $(\mathbb{R}^l,\Sigma, \mu)$ be the standard Lebesgue measure space, $1 < q\leq \infty$, $f, g\in L_q(\mathbb{R}^l,\mu)$,
%$f\in L_q[\alpha_1, \beta_1]$, $g\in L_q[\alpha_2, \beta_2]$.
 $G_1,\, G_2\in \Sigma$
%$$
%\mathbb{R},\, \mbox{ or }\, (-\infty,\alpha_1], \mbox{ or }\, (-\infty,\beta_1], \mbox{ or }\, [\alpha_1, \infty), \mbox{ or }\, [\beta_1, \infty), %\mbox{ or } [\alpha_1,\beta_1],\, \mbox{ or } \, [\alpha_2,\beta_2],
%$$
and
$
  G=G_1\cap G_2.
$
 Then the following statements are equivalent\textup{:}
\begin{enumerate}[label=\textup{\arabic*.}, ref=\arabic*]
  \item \label{LemmaAllowInfSetsEqLp:cond1} For $1 \leq p < \infty$ such that $\dfrac{1}{p}+\dfrac{1}{q}=1$ and all $x\in L_p(\mathbb{R}^l,\mu)$,
\begin{gather*}%\label{EqInteDiffIntervalsLp}
  Q_{G_1}(f,x)=\int\limits_{G_1} f(t)x(t)d\mu=\int\limits_{G_2} g(t)x(t)d\mu=Q_{G_2}(g,x);
\end{gather*}
\item \label{LemmaAllowInfSetsEqLp:cond2} The following conditions hold\textup{:}
    \begin{enumerate}[label=\textup{\alph*)}]
      \item for almost every $t\in G$, $f(t)=g(t)$;
      \item for almost every $t \in G_1\setminus G$,
$ f(t)=0, $
 \item  for almost every $t \in G_2\setminus G$,
$
  g(t)=0.
$
    \end{enumerate}
\end{enumerate}
\end{lemma}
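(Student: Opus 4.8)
The plan is to reduce the statement to the finite-measure case already established in Lemma~\ref{lemEqIntForAllLpFunctFinitMeasureA}, exploiting that the standard Lebesgue measure on $\mathbb{R}^l$ is $\sigma$-finite in order to cut $G_1$ and $G_2$ into countably many finite-measure pieces and then reassemble the almost-everywhere conclusions. The direction \ref{LemmaAllowInfSetsEqLp:cond2}$\Rightarrow$\ref{LemmaAllowInfSetsEqLp:cond1} needs no new idea: since $f,g\in L_q(\mathbb{R}^l,\mu)$ and $\frac{1}{p}+\frac{1}{q}=1$, H\"older's inequality makes $fx$ and $gx$ integrable over every measurable set for each $x\in L_p(\mathbb{R}^l,\mu)$, so one splits $\int_{G_1}fx\,d\mu$ into the integrals over $G_1\setminus G$ and $G$, kills the first by b), rewrites the second by a), and reattaches the vanishing integral over $G_2\setminus G$ by c), arriving at $\int_{G_2}gx\,d\mu$ --- exactly as in the proof of Lemma~\ref{lemEqIntForAllLpFunctFinitMeasureA}.

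For \ref{LemmaAllowInfSetsEqLp:cond1}$\Rightarrow$\ref{LemmaAllowInfSetsEqLp:cond2} I would fix a countable disjoint cover $\mathbb{R}^l=\bigcup_{n\ge 1}E_n$ with $E_n\in\Sigma$, $\mu(E_n)<\infty$, put $G_1^n=G_1\cap E_n$, $G_2^n=G_2\cap E_n$ (both of finite measure), and take as kernels the restrictions $f|_{G_1^n}$, $g|_{G_2^n}$, which still belong to $L_q$. The point is that the hypothesis \ref{LemmaAllowInfSetsEqLp:cond1}, applied with the truncated test function $x\,I_{E_n}\in L_p(\mathbb{R}^l,\mu)$ in place of $x$, gives
\begin{equation*}
\int\limits_{G_1^n} f x\,d\mu=\int\limits_{G_1} f x\,I_{E_n}\,d\mu=\int\limits_{G_2} g x\,I_{E_n}\,d\mu=\int\limits_{G_2^n} g x\,d\mu
\end{equation*}
for all $x\in L_p(\mathbb{R}^l,\mu)$, with all four integrals finite by H\"older. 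Thus Lemma~\ref{lemEqIntForAllLpFunctFinitMeasureA} applies to the pair $G_1^n,G_2^n$, and, using the set identities $G_1^n\cap G_2^n=G\cap E_n$, $G_1^n\setminus(G\cap E_n)=(G_1\setminus G)\cap E_n$ and $G_2^n\setminus(G\cap E_n)=(G_2\setminus G)\cap E_n$, its conclusion reads: $f=g$ a.e.\ on $G\cap E_n$, $f=0$ a.e.\ on $(G_1\setminus G)\cap E_n$, and $g=0$ a.e.\ on $(G_2\setminus G)\cap E_n$. Since a countable union of null sets is null, taking $\bigcup_n$ of these exceptional sets yields a), b), c).

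I expect the one delicate point to be precisely the transfer of the hypothesis to the truncated sets $G_i^n$: this is where a naive imitation of the finite-measure proof breaks down, because $G_1\cup G_2$ may have infinite measure and then $I_{G_1\cup G_2}\notin L_p$; it works here only because multiplication by $I_{E_n}$ preserves $L_p(\mathbb{R}^l,\mu)$, so one is entitled to test \ref{LemmaAllowInfSetsEqLp:cond1} against truncated functions. Everything else is routine bookkeeping --- H\"older's inequality (including the endpoint case $q=\infty$, $p=1$) for local integrability, and $\sigma$-finiteness for the exhaustion $\{E_n\}$. If one prefers not to invoke Lemma~\ref{lemEqIntForAllLpFunctFinitMeasureA}, the same argument can be carried out directly: testing \ref{LemmaAllowInfSetsEqLp:cond1} against $I_S$ for finite-measure sets $S$ lying respectively inside $G_1\setminus G$, $G_2\setminus G$ and $G$ forces $\int_S f\,d\mu=0$, $\int_S g\,d\mu=0$ and $\int_S(f-g)\,d\mu=0$, and since $f,g$ are integrable on each such $S$ by H\"older, the standard ``vanishing on every subset'' argument delivers b), c) and a).
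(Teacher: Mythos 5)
Your argument is correct. Note, however, that this paper does not actually contain a proof of Lemma~\ref{lemEqIntForAllLpFunctInfinitSets}: it is only stated and attributed to the reference \cite{DjinjaEtAll_LinItOpInGenSepKern}, so there is no in-paper proof to compare against. On its own merits, your reduction works: the direction \ref{LemmaAllowInfSetsEqLp:cond2}$\Rightarrow$\ref{LemmaAllowInfSetsEqLp:cond1} is the same additivity computation as in Lemma~\ref{lemEqIntForAllLpFunctFinitMeasureA}, with H\"older (using $f,g\in L_q$ and $x\in L_p$) guaranteeing that every integral involved is finite; and for \ref{LemmaAllowInfSetsEqLp:cond1}$\Rightarrow$\ref{LemmaAllowInfSetsEqLp:cond2} the truncation $x\mapsto x\,I_{E_n}$ correctly transfers the hypothesis to the finite-measure sets $G_i\cap E_n$, because $\int_{G_1\cap E_n}fx\,d\mu=\int_{G_1}f\,(xI_{E_n})\,d\mu$ and $xI_{E_n}\in L_p(\mathbb{R}^l,\mu)$, so Lemma~\ref{lemEqIntForAllLpFunctFinitMeasureA} applies on each piece and the exceptional sets accumulate to a null set over the countable exhaustion. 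You also correctly identify the one genuinely delicate point, namely that $I_{G_1\cup G_2}$ need not lie in $L_p$ when the sets have infinite measure, which is exactly why the $L_q$ integrability of $f,g$ and the $\sigma$-finite exhaustion are needed here but not in the finite-measure lemma. The only cosmetic slip is the phrase that $f|_{G_1^n}$, $g|_{G_2^n}$ ``still belong to $L_q$'': Lemma~\ref{lemEqIntForAllLpFunctFinitMeasureA} does not ask for that, only for finiteness of the pairings $Q_{G_1^n}(f,x)$ and $Q_{G_2^n}(g,x)$, which you have already secured via H\"older.
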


\section{Representations by linear integral operators}\label{SecRepreBothLI}

In this section, we construct representations of the polynomial covariance commutation relations \eqref{Covrelation} by bounded linear integral operators on Banach spaces $L_p$ over measure spaces. When $B=0$, the relation \eqref{Covrelation} is trivially satisfied for any $A$. If $A=0$ then the relation \eqref{Covrelation} reduces to $F(0)B=0$. This implies either ($F(0)=0$ and $B$ can be any well defined operator) or $B=0$. Thus, we focus on construction and properties of non-zero representations of \eqref{Covrelation}.
% We derive conditions on such kernel functions so that the corresponding operators satisfy \eqref{Covrelation} for polynomial $F$ when both operators  are of linear integral type.  In particular, we prove that there are no nonzero onesided convolution linear integral operators representing covariance type commutation relation for monomial $t^m$, $m$ a nonnegative integer except 1.
%
% This paper is organized in four sections. After the introduction,  we present in Section \ref{SecPreNot} some preliminaries, notations, basic definitions and two lemmas used throughout the article.
%In Section \ref{SecRepreBothLI}, we present some representations when both operators $A$ and $B$ are linear integral operators  acting on the Banach spaces $L_p$. In particular, we consider cases when operator are convolution type and operators with separable kernels.

Let $(X,\Sigma,\mu)$ be a  $\sigma$-finite measure space. In this section we consider  representations of the covariance type commutation relation \eqref{Covrelation} when both $A$ and $B$ are linear integral operators  acting from the Banach space $L_p(X,\mu)$ to itself for a fixed $p$ such that $1\le p\le\infty$ defined as follows:
\begin{equation*}
  (Ax)(t)= \int\limits_{G_A} k_A(t,s)x(s)d\mu_s,\quad (Bx)(t)= \int\limits_{G_B}{k}_B(t,s)x(s)d\mu_s,
\end{equation*}
almost everywhere, where the index in $\mu_s$ indicates the variable of integration, $G_A,G_B\in \Sigma$, $\mu(G_A)<\infty$, $\mu(G_B)<\infty$,  $ k_A(t,s):X\times G_A\to \mathbb{R} $, ${k}_B(t,s):\, X\times G_B\to \mathbb{R}$   are measurable functions such that operators are well defined from $L_p(X,\mu)$ to $L_p(X,\mu)$. We will not focus on such conditions here, referring on this to \cite{ConwayFunctionalAnalysis,FollandRA}.
This allows to consider broader classes of integral operators and in this way the following theorem and its corollaries generalize the corresponding theorem and corollaries in \cite{DjinjaEtAll_IntOpOverMeasureSpaces}. Moreover, here we give a more precise proof.

\begin{theorem}\label{thmBothIntOPKernelsGen}
Let $(X,\Sigma,\mu)$ be a  $\sigma$-finite measure space and $1\le p\le\infty$, and let $A:L_p(X,\mu)\to L_p(X,\mu)$ and  $B:\, L_p(X,\mu)\to L_p(X,\mu)$ be nonzero linear operators  defined, for almost every $t$, by
\begin{align*}
& (Ax)(t)= \int\limits_{G_A} k_A(t,s)x(s)d\mu_s,\quad (Bx)(t)= \int\limits_{G_B} {k}_B(t,s)x(s)d\mu_s, \\
& \hspace{1cm} \mbox{{\rm (}the subscript $s$ in $\mu_s$ indicates the variable of integration{\rm )}}
\end{align*}
where $G_A,\, G_B \in \Sigma$, $\mu(G_A)<\infty$, $\mu(G_B)<\infty$, and $ k_A(t,s):X\times G_A\to \mathbb{R} $ and ${k}_B(t,s):X\times G_B\to \mathbb{R}$  are measurable functions.
Let
$F(z)=\sum\limits_{j=0}^{n} \delta_j z^j$, where $\delta_j \in\mathbb{R}$, $j=0,\ldots,n$. Set $G=G_A\cap G_B$ and
\begin{gather*}
k_{0,A}(t,s)=k_A(t,s), \quad
 k_{m,A}(t,s)=\int\limits_{G_A} k_A(t,\tau)k_{m-1,A}(\tau,s)d{\mu_\tau},\quad m=1,\ldots,n, \\
 F_0(k_A(t,s))=0,\quad F_n(k_A(t,s))=\sum_{j=1}^{n} \delta_j k_{j-1}(t,s),\mbox{ if } n\ge 1.
\end{gather*}
Then $AB=BF(A)$
if and only if  the following conditions are fulfilled:
 \begin{enumerate}[label=\textup{\arabic*.}, ref=\arabic*]
   \item \label{thmBothIntOPKernelsGen:cond1} for almost every $(t,\tau)\in X\times G$,
   \begin{equation*}
      \int\limits_{G_A} k_A(t,s){k}_B(s,\tau)d\mu_s-\delta_0{k}_B(t,\tau)  = \int\limits_{G_B} {k}_B(t,s) F_n(k_A(s,\tau))d\mu_s;
   \end{equation*}
   \item \label{thmBothIntOPKernelsGen:cond2} for almost every $(t,\tau)\in X\times(G_B\setminus G)$;
   \begin{equation*}
     \int\limits_{G_A} k_A(t,s){k}_B(s,\tau)d\mu_s=\delta_0 {k}_B(t,\tau),
   \end{equation*}
   \item \label{thmBothIntOPKernelsGen:cond3} for almost every $(t,\tau)\in X\times(G_A\setminus G)$,
   \begin{equation*}
     \int\limits_{G_B} {k}_B(t,s) F_n(k_A(s,\tau))d\mu_s=0.
   \end{equation*}
 \end{enumerate}
\end{theorem}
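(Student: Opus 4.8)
The plan is to reduce the operator identity $AB=BF(A)$ to an equality between the integral kernels of its two sides and then to invoke Lemma~\ref{lemEqIntForAllLpFunctFinitMeasureA} with the outer variable frozen. The first step is to identify the kernel of each power of $A$: under the standing assumption that the operators, and hence all compositions occurring below, are well defined on $L_p(X,\mu)$, Fubini's theorem (see \cite{ConwayFunctionalAnalysis,FollandRA}) justifies the interchanges of the order of integration, and an induction on $m$ (using $A^{m+1}=A\circ A^{m}$ together with $\int_{G_A}k_A(t,\tau)k_{m-1,A}(\tau,s)\,d\mu_\tau=k_{m,A}(t,s)$) shows that $A^{m+1}$ is the integral operator over $G_A$ with kernel $k_{m,A}$. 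Hence $\sum_{j=1}^{n}\delta_j A^{j}$ is the integral operator over $G_A$ with kernel $F_n(k_A(t,s))=\sum_{j=1}^{n}\delta_j k_{j-1,A}(t,s)$, and therefore $F(A)=\delta_0 I+\widetilde F$, where $I$ is the identity operator and $\widetilde F$ is the integral operator over $G_A$ with kernel $F_n(k_A(\cdot,\cdot))$.

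Next I would compute the kernels of $AB$ and of $BF(A)=\delta_0 B+B\widetilde F$. Writing out $(ABx)(t)$ and interchanging the order of integration shows that $AB$ is the integral operator over $G_B$ with kernel $\int_{G_A}k_A(t,s){k}_B(s,\tau)\,d\mu_s$; in the same way $B\widetilde F$ is the integral operator over $G_A$ with kernel $\int_{G_B}{k}_B(t,s)F_n(k_A(s,\tau))\,d\mu_s$, while $\delta_0 B$ is the integral operator over $G_B$ with kernel $\delta_0{k}_B(t,\tau)$. Consequently $AB=BF(A)$ holds if and only if for every $x\in L_p(X,\mu)$ and almost every $t$
\begin{multline*}
\int\limits_{G_B}\Bigl[\,\int\limits_{G_A}k_A(t,s){k}_B(s,\tau)\,d\mu_s-\delta_0{k}_B(t,\tau)\Bigr]x(\tau)\,d\mu_\tau \\
=\int\limits_{G_A}\Bigl[\,\int\limits_{G_B}{k}_B(t,s)F_n(k_A(s,\tau))\,d\mu_s\Bigr]x(\tau)\,d\mu_\tau .
\end{multline*}

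Finally, for each fixed admissible $t$ I would apply Lemma~\ref{lemEqIntForAllLpFunctFinitMeasureA} with $G_1=G_B$ and $G_2=G_A$, so that $G_1\cap G_2=G$, taking $f(\tau)$ to be the bracketed function on the left-hand side above and $g(\tau)$ to be the bracketed function on the right-hand side; the integrability hypotheses of the lemma hold because $AB$, $B$ and $B\widetilde F$ are well-defined operators on $L_p(X,\mu)$. The equivalence \ref{LemmaFiniteMeasureEqLp:cond1}$\Leftrightarrow$\ref{LemmaFiniteMeasureEqLp:cond2} of that lemma then yields exactly the three asserted conditions: $f=g$ almost everywhere on $G$ is condition~\ref{thmBothIntOPKernelsGen:cond1}, $f=0$ almost everywhere on $G_B\setminus G$ is condition~\ref{thmBothIntOPKernelsGen:cond2}, and $g=0$ almost everywhere on $G_A\setminus G$ is condition~\ref{thmBothIntOPKernelsGen:cond3}; passing from these ``for almost every $\tau$, at each fixed $t$'' statements to the claimed statements for almost every $(t,\tau)$ is done by applying Tonelli's theorem to the jointly measurable exceptional sets.

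I expect the main obstacle to lie not in the algebra — identifying the kernels of the powers of $A$ and of the two sides of \eqref{Covrelation} is routine — but in the measure-theoretic bookkeeping: justifying the interchanges of integration order that define $k_{m,A}$ and the kernels of $AB$ and $B\widetilde F$ (which is precisely what the well-definedness and boundedness hypotheses are for, and why the paper defers such conditions to \cite{ConwayFunctionalAnalysis,FollandRA}), and, in the ``only if'' direction, fixing a single conull set of values of $t$ on which the displayed equality holds simultaneously for all the test functions $x$ that the proof of Lemma~\ref{lemEqIntForAllLpFunctFinitMeasureA} needs, so that the pointwise-in-$t$ application of that lemma becomes legitimate.
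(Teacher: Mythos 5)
Your proposal is correct and follows essentially the same route as the paper's proof: compute the iterated kernels of $A^m$, identify the kernels of $AB$ and $BF(A)=\delta_0 B+B\widetilde F$ via Fubini, reduce the operator identity to the displayed equality of integrals over $G_B$ and $G_A$, and apply Lemma~\ref{lemEqIntForAllLpFunctFinitMeasureA} with $G_1=G_B$, $G_2=G_A$ to obtain the three conditions. Your closing remarks on fixing a conull set of $t$ and using Tonelli on the exceptional sets supply bookkeeping the paper leaves implicit, but they do not change the argument.
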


\begin{proof}
  By applying Fubini theorem from \cite{AdamsG} and iterative kernels from  \cite{KrasnolskZabreyko} we have
 % \begin{eqnarray*}
%  (A^2x)(t)&=&\int\limits_{G_A} k_A(t,s)(Ax)(s)d\mu_s=
%  \int\limits_{G_A} k_A(t,s)\left(\int\limits_{G_A} k_A(s,\tau)x(\tau)d{\mu_\tau}\right)d\mu_s\\
%  &=&\int\limits_{G_A} \left(\int\limits_{G_A} k_A(t,s)k(s,\tau)d\mu_s\right)x(\tau) d{\mu_\tau} =
%  \int\limits_{G_A} k_{1,A}(t,\tau)x(\tau)d{\mu_\tau},\  \mbox{ where }
%  \vspace{-0.7cm}
%  \end{eqnarray*}
%  \begin{equation*}\vspace{-0.7cm}
%  k_{1,A}(t,s)=\int\limits_{G_A} k_A(t,\tau)k_A(\tau,s)d{\mu_\tau};
%   \end{equation*}
%  \begin{eqnarray*}
%  (A^3x)(t)&=&\int\limits_{G_A} k_A(t,s)(A^2x)(s)d\mu_s
%  =\int\limits_{G_A} k_A(t,s)\left(\int\limits_{G_A}  k_{1,A}(s,\tau)x(\tau)d\mu_{\tau}\right)d\mu_s\\
%   &=&\int\limits_{G_A} \left(\int\limits_{G_A}  k_A(t,s)k_1(s,\tau)d\mu_s\right)x(\tau)d\mu_{\tau}=\int\limits_{G_A} k_{2,A}(t,\tau)x(\tau)d{\mu_\tau},\ \mbox{ where }
%   %\vspace{-0.7cm}
%   \end{eqnarray*}
%   \begin{equation*}%\vspace{-0.7cm}
%   k_{2,A}(t,s)=\int\limits_{G_A} k_A(t,\tau)k_{1,A}(\tau,s)d{\mu_\tau}.
%  \end{equation*}
for all $m\ge 1$,
  \begin{gather*} %\label{AoncompositionIntegralOperator}
  (A^mx)(t)=\int\limits_{G_A} k_{{m-1},A}(t,s)x(s)d\mu_s,\quad \mbox{ where }\\  \nonumber
  k_{m,A}(t,s)=\int\limits_{G_A} k_A(t,\tau)k_{m-1,A}(\tau,s)d{\mu_\tau},\quad m=1,\ldots,n,\quad  % \\
  %k_1(t,s)=\int\limits_{\alpha_1}^{\beta_1} k(t,\tau)k(\tau,s)d\tau, \quad
  k_{0,A}(t,s)=k_A(t,s).
\end{gather*}
 It follows that
 \begin{eqnarray*}
   (F(A)x)(t)&=&\delta_0 x(t)+ \sum\limits_{j=1}^{n} \delta_j (A^j x)(t)\\
   % \label{FAgeneralkernelIntegralOperator}
   &=&\delta_0 x(t)+\sum\limits_{j=1}^{n} \delta_{j} \int\limits_{G_A} k_{j-1,A}(t,s)x(s)d\mu_s \\
   &=&\delta_0 x(t)+\int\limits_{G_A} F_n(k_A(t,s))x(s)d\mu_s,\ \mbox{ where } \\
 F_0(k_A(t,s))=0,&&\quad F_n(k_A(t,s))=\sum_{j=1}^{n} \delta_j k_{j-1,A}(t,s),\mbox{ if } n\ge 1.
   \end{eqnarray*}
Computation of $BF(A)x$ and $(AB)x$ yields
  \begin{eqnarray*}
     (BF(A)x)(t)&=&\int\limits_{G_B} {k}_B(t,s)(F(A)x)(s)d\mu_s \\
   &=&\int\limits_{G_B} {k}_B(t,s) \big(\delta_0 x(s)+\int\limits_{G_A}  F_n(k_A(s,\tau)x(\tau)d{\mu_\tau}) \big)d\mu_s \\
   &=& \delta_0\int\limits_{G_B} {k}_B(t,s)x(s)d\mu_s+ \int\limits_{G_A} \big(\ \int\limits_{G_B} {k}_B(t,s)F_n(k_A(s,\tau))d\mu_s\big)x(\tau) d\mu_{\tau}\\
   &=& \delta_0\int\limits_{G_B} {k}_B(t,s)x(s)d\mu_s+ \int\limits_{G_A} k_{BF(A)}(t,\tau)x(\tau)d\mu_\tau, \\
   && \quad \text{where}\quad k_{BF(A)}(t,\tau)=\int\limits_{G_B} {k}_B(t,s) F_n(k_A(s,\tau))d\mu_s, \\
   (ABx)(t)&=&\int\limits_{G_B} k_A(t,s)(Bx)(s)d\mu_s=\int\limits_{G_A} k_A(t,s)\big(\ \int\limits_{G_B} {k}_B(s,\tau)x(\tau)d\mu_\tau \big)d\mu_s\\
   &=&\int\limits_{G_B} \big(\ \int\limits_{G_A} k_A(t,s){k}_B(s,\tau)d\mu_s\big)x(\tau)d\mu_\tau =\int\limits_{G_B} k_{AB}(t,\tau) x(\tau)d\mu_\tau,\\
  && \quad \text{where}\quad k_{AB}(t,\tau)=\int\limits_{G_A} k_A(t,s){k}_B(s,\tau)d\mu_s.
  \end{eqnarray*}

 We thus have  $(ABx)(t)=(BF(A)x)(t)$ for all $x\in L_p(X,\mu)$ if and only if
 \begin{equation*}
   \int\limits_{G_B} ( k_{AB}(t,\tau)-\delta_0 {k}_B(t,\tau)) x(\tau)d\mu_\tau=\int\limits_{G_A} k_{BF(A)}(t,\tau)x(\tau)d\mu_\tau.
 \end{equation*}
 By applying Lemma \ref{lemEqIntForAllLpFunctFinitMeasureA},    %\ref{lemEqIntForAllLpFunct}
 we conclude that $AB=BF(A)$ if and only if
 the following conditions hold:
 \begin{enumerate}[label=\textup{\arabic*.}, ref=\arabic*]
   \item for almost every $(t,\tau)\in X\times G$,% $G=(\alpha_1,\beta_1)\cap(\alpha_2,\beta_2)$,
   \begin{equation*}
      \int\limits_{G_A} k_A(t,s){k}_B(s,\tau)d\mu_s-\delta_0 {k}_B(t,\tau)  = \int\limits_{G_B} {k}_B(t,s) F_n(k_A(s,\tau))d\mu_s;
   \end{equation*}
   \item for almost every $(t,\tau)\in X\times(G_B\setminus G)$,
   \begin{equation*}
     \int\limits_{G_A} k_A(t,s){k}_B(s,\tau)d\mu_s=\delta_0{k}_B(t,\tau);
   \end{equation*}
   \item for almost every $(t,\tau)\in X\times(G_A\setminus G)$,
   \begin{equation*}
     \int\limits_{G_B} {k}_B(t,s) F_n(k_A(s,\tau))d\mu_s=0. \tag*{\qed}
   \end{equation*}
 \end{enumerate}
 \end{proof}

\begin{remark} (\hspace{-0,1mm}\cite{DjinjaEtAll_IntOpOverMeasureSpaces}) \label{RemOpDefInSameInterval}
{\rm In Theorem \ref{thmBothIntOPKernelsGen} when $G_A=G_B=G$ conditions \ref{thmBothIntOPKernelsGen:cond2} and \ref{thmBothIntOPKernelsGen:cond3} are taken on set of measure zero so we can ignore them. Thus, we only remain with condition \ref{thmBothIntOPKernelsGen:cond1}. When $G_A\not=G_B$ we need to check also conditions \ref{thmBothIntOPKernelsGen:cond2} and \ref{thmBothIntOPKernelsGen:cond3} outside the intersection  $G=G_A\cap G_B$. Moreover condition \ref{thmBothIntOPKernelsGen:cond3}, which is, for almost every $(t,\tau)\in X\times(G_A\setminus G)$,
   \begin{equation}\label{Condition3ThmBothIntOpGenEx}
     \int\limits_{G_B} {k}_B(t,s) F_n(k_A(s,\tau))d\mu_s=0,
   \end{equation}
   does not imply $\displaystyle B\left(\sum_{k=1}^{n} \delta_k A^k\right)=0$ because the kernel of the integral operator $\displaystyle B\left(\sum_{k=1}^{n} \delta_k A^k\right)$ has to satisfy \eqref{Condition3ThmBothIntOpGenEx} only
   on the set $X\times (G_A\setminus G)$ and not on the whole domain of definition $X\times G_{A}$ of $k_A$. On the other hand, the same kernel has to satisfy
   condition \ref{thmBothIntOPKernelsGen:cond1}, which is,
    for almost every $(t,\tau)\in X\times G$,% $G=(\alpha_1,\beta_1)\cap(\alpha_2,\beta_2)$,
   \begin{equation*}
      \int\limits_{G_A} k_A(t,s){k}_B(s,\tau)d\mu_s-\delta_0{k}_B(t,\tau)  = \int\limits_{G_B} {k}_A(t,s) F_n(k_A(s,\tau))d\mu_s.
   \end{equation*}
   Note that Theorem \ref{thmBothIntOPKernelsGen} does not imply $\displaystyle\sum_{k=1}^n \delta_k A^k=0$. In fact, $\displaystyle\sum_{k=1}^n \delta_k A^k=0$ implies $B\left(\sum\limits_{k=1}^{n} \delta_k A^k\right)=0$, but as mentioned above $B\left(\sum\limits_{k=1}^{n} \delta_k A^k\right)$ can be nonzero in general even when $A$ and $B$ satisfy the commutation relation $AB=BF(A)$.
   }
\end{remark}

\begin{theorem}[\hspace{-0.1mm}\cite{HutsonPym}]\label{TheoremHutsonPymOpwellDefinedInterval}
Let $([\alpha,\beta],\Sigma,\mu)$ be the standard Lebesgue measure in the interval $[\alpha,\beta]$, $\alpha,\beta\in\mathbb{R}$ and $\alpha<\beta$. Let $(Ax)(t)=\int\limits_{\alpha}^{\beta} k_A(t,s)x(s)d\mu_s$, where $k_A(t,s):[\alpha,\beta]^2\to \mathbb{R}$ is a function, and the subscript $s$ in $\mu_s$ denotes the variable of integration. Suppose that $k_A$ is measurable and the numbers $p,q$ are such that $1\leq p \leq \infty$, $1\leq q \leq \infty$ and $\frac{1}{p}+\frac{1}{q}=1$. Set
\begin{eqnarray*}
 \|k_A\|_{L_1}  &=& \sup_{s\in [\alpha,\beta]} \int\limits_{\alpha}^{\beta} |k(t,s)|d\mu_t, \\
  \|k_A\|_{L_p}  &=& \left(\int\limits_{\alpha}^{\beta}\left( \int\limits_{\alpha}^{\beta} |k(t,s)|^qd\mu_s\right)^{\frac{p}{q}}\right)^{\frac{1}{p}},\ (1<p<\infty), \\
   \|k_A\|_{L_\infty}  &=& \sup_{t\in [\alpha,\beta]} \int\limits_{\alpha}^{\beta} |k(t,s)|d\mu_s. \\
\end{eqnarray*}
If for some $1\leq p \leq \infty$, the quantity $\|k_A\|_{L_p}$ is finite, then the linear operator $A:L_p([\alpha,\beta])\to L_p([\alpha,\beta])$ is bounded, and $\|A\|_{L_p}\leq \|k_A\|_{L_p}$.
\end{theorem}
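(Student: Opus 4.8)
The plan is to split the argument according to the three ranges $p=1$, $1<p<\infty$, $p=\infty$, since the quantities $\|k_A\|_{L_1}$, $\|k_A\|_{L_p}$, $\|k_A\|_{L_\infty}$ are defined by three different formulas. In each case I would first estimate $|(Ax)(t)|$ (for fixed $t$) by Hölder's inequality in the variable $s$, and then integrate the resulting inequality in $t$; the same chain of estimates simultaneously shows that the defining integral converges for almost every $t$ and that $Ax$ is measurable, so that $A$ is genuinely a well-defined bounded linear operator $L_p([\alpha,\beta])\to L_p([\alpha,\beta])$.

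For $1<p<\infty$, let $q$ be the conjugate exponent and fix $x\in L_p([\alpha,\beta])$. For every $t$ with $\int_\alpha^\beta|k_A(t,s)|^q\,d\mu_s<\infty$, Hölder's inequality gives
\[
|(Ax)(t)|\le\int_\alpha^\beta|k_A(t,s)||x(s)|\,d\mu_s\le\Big(\int_\alpha^\beta|k_A(t,s)|^q\,d\mu_s\Big)^{1/q}\|x\|_{p}.
\]
By Tonelli's theorem the map $t\mapsto\int_\alpha^\beta|k_A(t,s)|^q\,d\mu_s$ is measurable, and since $\|k_A\|_{L_p}<\infty$ it is finite for almost every $t$, so the displayed inequality holds a.e. Raising it to the $p$-th power and integrating in $t$ yields
\[
\|Ax\|_{p}^p\le\|x\|_{p}^p\int_\alpha^\beta\Big(\int_\alpha^\beta|k_A(t,s)|^q\,d\mu_s\Big)^{p/q}d\mu_t=\|k_A\|_{L_p}^p\,\|x\|_{p}^p,
\]
whence $Ax\in L_p$ and $\|A\|_{L_p}\le\|k_A\|_{L_p}$; here measurability of $t\mapsto(Ax)(t)$ follows from Fubini's theorem, which applies because $(t,s)\mapsto k_A(t,s)x(s)$ is integrable on $[\alpha,\beta]^2$ — this last fact coming from the first display together with Hölder's inequality in $t$ and the finiteness of the Lebesgue measure of $[\alpha,\beta]$.

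For $p=\infty$ one estimates directly: for $x\in L_\infty([\alpha,\beta])$,
\[
|(Ax)(t)|\le\|x\|_\infty\int_\alpha^\beta|k_A(t,s)|\,d\mu_s\le\|x\|_\infty\,\|k_A\|_{L_\infty}\qquad\text{for a.e. }t,
\]
so $\|A\|_{L_\infty}\le\|k_A\|_{L_\infty}$. For $p=1$, instead of a pointwise bound one estimates the $L_1$-norm directly: for $x\in L_1([\alpha,\beta])$,
\[
\|Ax\|_1\le\int_\alpha^\beta\!\!\int_\alpha^\beta|k_A(t,s)||x(s)|\,d\mu_s\,d\mu_t=\int_\alpha^\beta|x(s)|\Big(\int_\alpha^\beta|k_A(t,s)|\,d\mu_t\Big)d\mu_s\le\|k_A\|_{L_1}\|x\|_1,
\]
where the interchange of the order of integration is Tonelli's theorem, valid since the integrand is nonnegative and product-measurable and the Lebesgue measure on $[\alpha,\beta]$ is $\sigma$-finite. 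The chain of inequalities itself is entirely routine; the only points that require care — and hence the main thing to get right — are the measurability of $t\mapsto(Ax)(t)$ and of the inner-integral functions, together with the Fubini–Tonelli justifications, all of which rest solely on product-measurability of $k_A$ and on $\sigma$-finiteness (indeed finiteness) of Lebesgue measure on $[\alpha,\beta]$.
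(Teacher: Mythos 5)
Your argument is correct. Note first that the paper itself gives no proof of this statement: it is quoted verbatim as a known result from the reference \cite{HutsonPym}, so there is no in-paper argument to compare against. Your proof is the standard one for this Schur/Young-type boundedness test: H\"older's inequality in $s$ for $1<p<\infty$, the direct sup bound for $p=\infty$, and Tonelli's theorem with the order of integration exchanged for $p=1$, each yielding exactly the corresponding quantity $\|k_A\|_{L_p}$ as the operator-norm bound. You also correctly handle the only delicate points, namely the a.e.\ finiteness of the defining integral and the measurability of $t\mapsto(Ax)(t)$, via the finiteness of the double integral of $|k_A(t,s)x(s)|$ (which in the case $1<p<\infty$ uses H\"older in $t$ together with the finiteness of the measure of $[\alpha,\beta]$) and Fubini--Tonelli. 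Nothing is missing.
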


\begin{example} {\rm(}\hspace{-0,1mm}\cite{DjinjaEtAll_IntOpOverMeasureSpaces}{\rm)} \label{ExampleTheoremIntOpRepGenKernLp}
{\rm
Let $(\mathbb{R}, \Sigma,\mu)$ be the standard Lebesgue measure space.  Consider  integral operators acting on $L_p(\mathbb{R},\mu)$ for $1<p<\infty$.
Let $A:L_p(\mathbb{R},\mu)\to L_p(\mathbb{R},\mu)$,  $B:L_p(\mathbb{R},\mu)\to L_p(\mathbb{R},\mu)$ be defined, for almost every $t$, by
\begin{equation*}
  (Ax)(t)= \int\limits_0^\pi k_A(t,s)x(s)d\mu_s,\quad (Bx)(t)= \int\limits_0^\pi {k}_B(t,s)x(s)d\mu_s,
\end{equation*}
where the subscript $s$ in $\mu_s$ indicates the variable of integration, and
\begin{align*}
    k_A(t,s)=\, &I_{[\alpha,\beta]}(t)\frac{2}{\pi}(\cos t \cos s+\sin t\sin s+\cos t\sin s)\\
    {k}_B(t,s)=\, & I_{[\alpha,\beta]}(t)\frac{2}{\pi}(\cos t \cos s+2\sin t\sin s)
\end{align*}
for  almost every $(t,s)\in \mathbb{R}\times[0,\pi]$, $\alpha,\,\beta$ are real constants such that $\alpha\le 0$, $\beta\ge \pi$ and $I_{E}(t)$ is the indicator function of the set $E$. The operators $A$ and $B$ are well defined and bounded on $L_p(\mathbb{R},\mu)$ by  Theorem \ref{TheoremHutsonPymOpwellDefinedInterval},
since kernels $K_A$ and $K_B$ have compact support in $\mathbb{R}\times [0,\pi]$, and
\begin{eqnarray*}
  \int\limits_{\mathbb{R}}\left(\int\limits_{0}^{\pi}|k_A(t,s)|^q d\mu_s\right)^{\frac{p}{q}} d\mu_t
 %  \int\limits_{\alpha}^\beta\left(\int\limits_{0}^{\pi}\left|\frac{2}{\pi}(\cos t \cos s \right.\right.  \\
%  &  & + \left.\left. \sin t\sin s+\cos t\sin s)\right|^q d\mu_s\right)^{\frac{p}{q}}d\mu_t\\
%  & \le & \int\limits_{\alpha}^\beta \frac{6^p}{\pi^{p-1}}dt=\frac{6^p(\beta-\alpha)}{\pi^{p-1}}  <\infty \\
%  \int\limits_{\mathbb{R}}\left(\int\limits_{0}^{\pi}|{k}_B(t,s)|^q d\mu_s\right)^{p/q}d\mu_t
%  & =&\int\limits_{\alpha}^\beta\left(\int\limits_{0}^{\pi}\left|\frac{2}{\pi}(\cos t \cos s+2\sin t\sin s)\right|^q d\mu_s\right)^{p/q}d\mu_t\\
%
&\le & \int\limits_{\alpha}^\beta {6^p}\pi dt={6^p(\beta-\alpha)}\pi  <\infty,\\
   \int\limits_{\mathbb{R}}\left(\int\limits_{0}^{\pi}|k_B(t,s)|^q d\mu_s\right)^{\frac{p}{q}} d\mu_t
&\le & \int\limits_{\alpha}^\beta {6^p}\pi dt={6^p(\beta-\alpha)}\pi  <\infty,
\end{eqnarray*}
where $q\ge 1$ such that $\displaystyle \frac{1}{p}+\frac{1}{q}=1$. %In the estimations above we used the following inequalities:
%\begin{eqnarray*}
%  |2(\cos t \cos s+\sin t\sin s+\cos t\sin s)|^q &\le&   2^q\cdot 3^q=6^q,\quad 1<q<\infty \\
%     |2(\cos t \cos s+2\sin t\sin s)|^q & \le & 2^q\cdot 3^q=6^q,
%     \  1<q<\infty.
%\end{eqnarray*}

Note that in this case, the conditions \ref{thmBothIntOPKernelsGen:cond1}, \ref{thmBothIntOPKernelsGen:cond2} and \ref{thmBothIntOPKernelsGen:cond3} of Theorem \ref{thmBothIntOPKernelsGen}
reduce just to the condition \ref{thmBothIntOPKernelsGen:cond1} because $G_A=G_B=[0,\pi]$, and so, $G=[0,\pi]$, $G_A\setminus G=G_B\setminus G=\emptyset$. %Therefore, according to Remark \ref{RemOpDefInSameInterval} conditions \ref{thmBothIntOPKernelsGen:cond2} and \ref{thmBothIntOPKernelsGen:cond3} are taken on a set of measure zero, they are fulfilled.

Consider the polynomial $F(z)=z^2$. These operators satisfy $AB=BF(A)$. %In fact, by applying Theorem \ref{thmBothIntOPKernelsGen} we have $\delta_0=\delta_1=0$, $\delta_2=1$, $n=2$,
Moreover, $BA^2\not=0$ as mentioned in Remark \ref{RemOpDefInSameInterval}. %in fact
%\begin{equation*}
%  (BA^2x)(t)=\frac{2}{\pi}I_{[\alpha,\beta]}(t)\int\limits_{0}^{\pi/2}  ({\cos t\cos\tau}+2\cos t\sin \tau+2\sin t \sin \tau)x(\tau)d\mu_\tau
%\end{equation*}
%almost everywhere.
}
\end{example}

The following corollary follows from Theorem \ref{thmBothIntOPKernelsGen} for the deformed Heisenberg-Lie commutation relations corresponding to the case of the polynomial $F$ of degree one, or in other words to the affine mapping $F$.
\begin{corollary}
Let $(X,\Sigma,\mu)$ be a  $\sigma$-finite measure space, $1\le p\le\infty$, and let $A:L_p(X,\mu)\to L_p(X,\mu)$, $B:L_p(X,\mu)\to L_p(X,\mu)$ be nonzero linear operators defined, for almost every $t$, by
\begin{align*}
  & (Ax)(t)= \int\limits_{G_A} k_A(t,s)x(s)d\mu_s,\quad (Bx)(t)= \int\limits_{G_B} {k}_B(t,s)x(s)d\mu_s,\\
  & \quad \mbox{{\rm(}the subscript $s$ in $\mu_s$ indicates variable of integration{\rm)} }
\end{align*}
where $G_A,\ G_B\in \Sigma$, $\mu(G_A)<\infty$, $\mu(G_B)<\infty$, and $k_A(t,s):X\times G_A\to \mathbb{R} $ and ${k}_B(t,s):X\times G_B\to \mathbb{R}$ are measurable functions. Let
$F(z)=\delta_0+\delta_1 z$,\ $\delta_0,\ \delta_1\in \mathbb{R}$. Set
$G=G_A\cap G_B.$
Then,
\begin{equation*}
  AB-\delta_1 BA=\delta_0 B
\end{equation*}
if and only if  the following conditions are fulfilled:
 \begin{enumerate}[label=\textup{\arabic*.}, ref=\arabic*]
   \item for almost every $(t,\tau)\in X\times G$,% $G=(\alpha_1,\beta_1)\cap(\alpha_2,\beta_2)$,
   \begin{equation*}
      \int\limits_{G_A} k_A(t,s){k}_B(s,\tau)d\mu_s-\delta_0 {k}_B(t,\tau)  = \delta_1\int\limits_{G_B} {k}_B(t,s) k_A(s,\tau)d\mu_s;
   \end{equation*}
   \item for almost every $(t,\tau)\in X\times(G_B\setminus G)$,
   \begin{equation*}
     \int\limits_{G_A} k_A(t,s){k}_B(s,\tau)d\mu_s=\delta_0 {k}_B(t,\tau);
   \end{equation*}
   \item for almost every $(t,\tau)\in X\times(G_A\setminus G)$,
   \begin{equation*}
     \delta_1\int\limits_{G_B} k_B(t,s) k_A(s,\tau)d\mu_s=0.
   \end{equation*}
 \end{enumerate}
\end{corollary}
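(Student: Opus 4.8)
The plan is to obtain this corollary as the special case $n=1$ of Theorem \ref{thmBothIntOPKernelsGen}. First I would take $F(z)=\delta_0+\delta_1 z$, so that in the notation of the theorem we have $\delta_j=0$ for $j\ge 2$ and $n=1$. Then the iterated kernels reduce to $k_{0,A}(t,s)=k_A(t,s)$ and the polynomial-in-kernel function becomes
\begin{equation*}
F_n(k_A(t,s))=F_1(k_A(t,s))=\sum_{j=1}^{1}\delta_j k_{j-1,A}(t,s)=\delta_1 k_{0,A}(t,s)=\delta_1 k_A(t,s).
\end{equation*}
Substituting this expression for $F_n(k_A(\cdot,\cdot))$ into conditions \ref{thmBothIntOPKernelsGen:cond1}, \ref{thmBothIntOPKernelsGen:cond2} and \ref{thmBothIntOPKernelsGen:cond3} of Theorem \ref{thmBothIntOPKernelsGen} and pulling the constant $\delta_1$ out of the integral over $G_B$ yields verbatim the three conditions in the statement of the corollary.

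Next I would translate the operator identity. For $F(z)=\delta_0+\delta_1 z$ we have $F(A)=\delta_0 I+\delta_1 A$ as a bounded linear operator on $L_p(X,\mu)$, hence
\begin{equation*}
BF(A)=B(\delta_0 I+\delta_1 A)=\delta_0 B+\delta_1 BA,
\end{equation*}
so the relation $AB=BF(A)$ from Theorem \ref{thmBothIntOPKernelsGen} is equivalent to $AB-\delta_1 BA=\delta_0 B$. Combining this equivalence with the equivalence supplied by the theorem (between $AB=BF(A)$ and the three kernel conditions) gives the claimed chain of equivalences, completing the proof.

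There is essentially no obstacle here beyond bookkeeping: the only point requiring minor care is checking that the degree-one specialization of the auxiliary functions $k_{m,A}$ and $F_n$ is carried out consistently with the conventions fixed before Theorem \ref{thmBothIntOPKernelsGen} (in particular that $F_0\equiv 0$ is not the relevant branch, since $n=1\ge 1$), and that the rewriting $BF(A)=\delta_0 B+\delta_1 BA$ is legitimate, which it is because $A$ and $B$ are bounded operators on the same space $L_p(X,\mu)$ so that composition and linear combination are well defined.

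\QEDB
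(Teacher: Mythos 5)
Your proposal is correct and coincides with the paper's own treatment: the paper states that this corollary follows from Theorem \ref{thmBothIntOPKernelsGen} by specializing to the affine polynomial $F(z)=\delta_0+\delta_1 z$ (so $n=1$, $F_1(k_A(t,s))=\delta_1 k_A(t,s)$), exactly as you carry out. Your additional bookkeeping on the iterated kernels and the identity $BF(A)=\delta_0 B+\delta_1 BA$ is accurate and fills in the routine details the paper omits.
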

The following corollary follows from Theorem \ref{thmBothIntOPKernelsGen} for the covariance commutation relations corresponding to the arbitrary monomial $F$. Note that for monomial of degree one, one gets the quantum plane commutation relation and if moreover the coefficient is equal to one, then one gets the commutativity relation.
\begin{corollary}\label{CorIntOpRepGenKernWRelation}
Let $(X,\Sigma,\mu)$ be a  $\sigma$-finite measure space, $1\le  p\le\infty$, and let $A:L_p(X,\mu)\to L_p(X,\mu)$,  $B:L_p(X,\mu)\to L_p(X,\mu)$ be nonzero linear operators defined, for almost every $t$, by
\begin{align*}
 & (Ax)(t)= \int\limits_{G_A} k_A(t,s)x(s)d\mu_s,\quad (Bx)(t)= \int\limits_{G_B} {k}_B(t,s)x(s)d\mu_s,\\
& \quad \mbox{{\rm(}the subscript $s$ in $\mu_s$ indicates variable of integration{\rm)} }
\end{align*}
where $G_A,\ G_B\in \Sigma$, $\mu(G_A)<\infty$, $\mu(G_B)<\infty$,  and $k_A(t,s):X\times G_A\to \mathbb{R} $ and ${k}_B(t,s):X\times G_B\to \mathbb{R}$   are measurable functions. Let
$F(z)=\delta z^d$, where $\delta\not=0$ is a nonzero real number, and $d>0$ is a positive integer. Set $G=G_A\cap G_B$ and
\begin{gather*}
k_{0,A}(t,s)=k_A(t,s), \quad
   k_{m,A}(t,s)=\int\limits_{G_A} k_A(t,\tau)k_{m-1,A}(\tau,s)d\mu_\tau,\quad m=1,\ldots,d.
\end{gather*}
Then,
\begin{equation*}
  AB=\delta BA^d
\end{equation*}
if and only if  the following conditions are fulfilled:
 \begin{enumerate}[label=\textup{\arabic*.}, ref=\arabic*]
   \item for almost every $(t,\tau)\in X\times G$,% $G=(\alpha_1,\beta_1)\cap(\alpha_2,\beta_2)$,
   \begin{equation*}
      \int\limits_{G_A} k_A(t,s){k}_B(s,\tau)d\mu_s = \delta\int\limits_{G_B} {k}_B(t,s) k_{d-1,A}(s,\tau)d\mu_s.
   \end{equation*}
   \item for almost every $(t,\tau)\in X\times(G_B\setminus G)$,
   \begin{equation*}
     \int\limits_{G_A} k_A(t,s){k}_B(s,\tau)d\mu_s=0.
   \end{equation*}
   \item for almost every $(t,\tau)\in X\times(G_A\setminus G)$,
   \begin{equation*}
     \int\limits_{G_B} {k}_B(t,s) k_{d-1,A}(s,\tau)d\mu_s=0.
   \end{equation*}
 \end{enumerate}
\end{corollary}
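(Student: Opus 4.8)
The plan is to obtain this corollary directly from Theorem~\ref{thmBothIntOPKernelsGen} by specializing the polynomial. First I would note that the relation $AB=\delta BA^d$ is exactly the covariance relation $AB=BF(A)$ for $F(z)=\delta z^d=\sum_{j=0}^{n}\delta_j z^j$ with $n=d$, $\delta_d=\delta$ and $\delta_j=0$ for every $j\neq d$; in particular $\delta_0=0$. The iterated kernels $k_{m,A}$ appearing in the corollary coincide with those defined in Theorem~\ref{thmBothIntOPKernelsGen} (with $n$ replaced by $d$), so Theorem~\ref{thmBothIntOPKernelsGen} applies verbatim with this choice of $F$.

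Next I would compute the auxiliary kernel $F_n(k_A(t,s))$ of Theorem~\ref{thmBothIntOPKernelsGen}. Since $F_n(k_A(t,s))=\sum_{j=1}^{n}\delta_j k_{j-1,A}(t,s)$ and only the coefficient $\delta_d$ is nonzero, this reduces to $F_n(k_A(t,s))=\delta\,k_{d-1,A}(t,s)$. When $d=1$ this reads $F_n(k_A(t,s))=\delta\,k_{0,A}(t,s)=\delta\,k_A(t,s)$, which is consistent with the bookkeeping of the iterated kernels and recovers the quantum plane relation, and for $\delta=1$ the commutativity relation, as stated before the corollary.

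Then I would substitute $\delta_0=0$ and $F_n(k_A(s,\tau))=\delta\,k_{d-1,A}(s,\tau)$ into conditions \ref{thmBothIntOPKernelsGen:cond1}, \ref{thmBothIntOPKernelsGen:cond2}, \ref{thmBothIntOPKernelsGen:cond3} of Theorem~\ref{thmBothIntOPKernelsGen}. Condition~\ref{thmBothIntOPKernelsGen:cond1} becomes, for almost every $(t,\tau)\in X\times G$,
\[
\int\limits_{G_A} k_A(t,s){k}_B(s,\tau)\,d\mu_s=\delta\int\limits_{G_B} {k}_B(t,s)k_{d-1,A}(s,\tau)\,d\mu_s;
\]
condition~\ref{thmBothIntOPKernelsGen:cond2} becomes $\int_{G_A} k_A(t,s){k}_B(s,\tau)\,d\mu_s=0$ for almost every $(t,\tau)\in X\times(G_B\setminus G)$; and condition~\ref{thmBothIntOPKernelsGen:cond3} becomes $\delta\int_{G_B} {k}_B(t,s)k_{d-1,A}(s,\tau)\,d\mu_s=0$ for almost every $(t,\tau)\in X\times(G_A\setminus G)$. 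Since $\delta\neq 0$ by hypothesis, this last identity is equivalent to $\int_{G_B} {k}_B(t,s)k_{d-1,A}(s,\tau)\,d\mu_s=0$. The three resulting conditions are precisely those in the statement of the corollary, which finishes the argument.

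There is no substantive obstacle here: the corollary is a straightforward specialization of the already established Theorem~\ref{thmBothIntOPKernelsGen}. The only points that need a little attention are the bookkeeping of the iterated kernels (in particular consistency at $d=1$, where $k_{d-1,A}=k_{0,A}=k_A$) and the cancellation of the factor $\delta$ in the third condition, which is legitimate precisely because the hypothesis excludes $\delta=0$.
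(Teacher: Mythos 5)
Your proposal is correct and matches the paper's approach: the paper's own proof is the one-line remark that the corollary follows from Theorem \ref{thmBothIntOPKernelsGen}, and you have simply spelled out the specialization $\delta_0=0$, $F_n(k_A)=\delta\,k_{d-1,A}$, and the cancellation of $\delta\neq 0$ in the third condition. Nothing further is needed.
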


\begin{proof}
  This follows by Theorem \ref{thmBothIntOPKernelsGen}. \qed
 \end{proof}

\begin{remark}{\rm
Example \ref{ExampleTheoremIntOpRepGenKernLp} describes a specific case for Corollary \ref{CorIntOpRepGenKernWRelation} when $G_A=G_B=[0,\pi]$, $\delta=1$, $d=2$.
}\end{remark}

Consider now the case when $X=\mathbb{R}^l$ and $\mu$ is the Lebesgue mesuare. In the following theorem we allow the sets $G_A$ and $G_B$ to have infinite measure. In this case we cannot make use of Lemma \ref{lemEqIntForAllLpFunctFinitMeasureA}, and instead, we can use Lemma \ref{lemEqIntForAllLpFunctInfinitSets}. Because of that we need conditions \eqref{ConditionsRepreRn} in Theorem \ref{thmBothIntOPKernelsGenRn}. Theorem
\ref{thmBothIntOPKernelsGenRn} has been considered in \cite{DjinjaEtAll_IntOpOverMeasureSpaces}. However, here
we give more precise conditions.
\begin{theorem}\label{thmBothIntOPKernelsGenRn}
Let $(\mathbb{R}^l,\Sigma,\mu)$ be the standard Lebesgue measure space, $1\le p<\infty$, and let $A:L_p(\mathbb{R}^l,\mu)\to L_p(\mathbb{R}^l,\mu)$ and $B:L_p(\mathbb{R}^l,\mu)\to L_p(\mathbb{R}^l,\mu)$ be nonzero operators  defined as follows, for almost very $t$,
\begin{align*}
 & (Ax)(t)= \int\limits_{G_A} k_A(t,s)x(s)d\mu_s,\quad (Bx)(t)= \int\limits_{G_B} {k}_B(t,s)x(s)d\mu_s, \\
& \quad \mbox{{\rm(}the subscript $s$ in $\mu_s$ indicates variable of integration{\rm)} }
\end{align*}
where $G_A\in\Sigma$, $G_B\in\Sigma$, and $ k_A(t,s):\mathbb{R}^l\times G_A\to \mathbb{R} $ and ${k}_B(t,s):\mathbb{R}^l\times G_B\to \mathbb{R}$   are measurable functions. Let
$F(z)=\sum\limits_{j=0}^{n} \delta_j z^j$, where $\delta_j \in\mathbb{R}$ for $j=0,\ldots,n$. Set $G=G_A\cap G_B$, and
\begin{gather*}
k_{A,0}(t,s)=k_A(t,s), \quad
 %k_1(t,s)=\int\limits_{\alpha_1}^{\beta_1} k(t,\tau)k(\tau,s)d\tau\\
   k_{A,m}(t,s)=\int\limits_{G_A} k_A(t,\tau)k_{A,{m-1}}(\tau,s)d\mu_\tau,\quad m=1,\ldots,n \\
 F_0(k_A(t,s))=0,\quad F_m(k_A(t,s))=\sum_{j=1}^{m} \delta_j k_{A,{j-1}}(t,s), \quad 1\leq m\leq n.
\end{gather*}
If $1<q\leq\infty$ with $\frac{1}{p}+\frac{1}{q}=1$, and  for almost every $t\in \mathbb{R}^l$,
\begin{gather}\label{ConditionsRepreRn}
\left.\begin{array}{c}
  I_{G_B}(\tau)\int\limits_{G_A} k_A(t,s){k}_B(s,\tau)d\mu_s \in L_q(\mathbb{R}^l,\mu), \
I_{G_B}(\cdot){k}_B(t,\cdot) \in L_q(\mathbb{R}^l,\mu), \\
I_{G_A}(\tau)\int\limits_{G_B} {k}_B(t,s)F_n(k_A(s,\tau))ds\in L_q(\mathbb{R}^l,\mu),
\end{array}\right.
\end{gather}
 then $ AB=BF(A)$ if and only if  the following conditions are satisfied:
 \begin{enumerate}[label=\textup{\arabic*.}, ref=\arabic*]
   \item \label{thmBothIntOPKernelsGenRn:cond1} for almost every $(t,\tau)\in \mathbb{R}^l\times G$,% $G=(\alpha_1,\beta_1)\cap(\alpha_2,\beta_2)$,
   \begin{equation*}
      \int\limits_{G_A} k_A(t,s){k}_B(s,\tau)d\mu_s-\delta_0{k}_B(t,\tau)
       = \int\limits_{G_B} {k}_B(t,s) F_n(k_A(s,\tau))d\mu_s;
   \end{equation*}
   \item \label{thmBothIntOPKernelsGenRn:cond2} for almost every $(t,\tau)\in \mathbb{R}^l\times (G_B\setminus G)$,
   \begin{equation*}
     \int\limits_{G_A} k_A(t,s){k}_B(s,\tau)d\mu_s=\delta_0{k}_B(t,\tau);
   \end{equation*}
   \item \label{thmBothIntOPKernelsGenRn:cond3} for almost every $(t,\tau)\in \mathbb{R}^l\times(G_A\setminus G)$,
   \begin{equation*}
     \int\limits_{G_B} {k}_B(t,s) F_n(k_A(s,\tau))d\mu_s=0.
   \end{equation*}
 \end{enumerate}
\end{theorem}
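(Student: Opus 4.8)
The plan is to follow the same computational scheme as in the proof of Theorem \ref{thmBothIntOPKernelsGen}, replacing the appeal to Lemma \ref{lemEqIntForAllLpFunctFinitMeasureA} by an appeal to Lemma \ref{lemEqIntForAllLpFunctInfinitSets}, which is exactly the tool designed to handle subsets of $\mathbb{R}^l$ of possibly infinite measure. First I would record, via the Fubini theorem and the iterated-kernel formalism, that for every $m\ge 1$ the power $A^m$ is again an integral operator with kernel $k_{A,m-1}$, so that $(F(A)x)(t)=\delta_0 x(t)+\int_{G_A} F_n(k_A(t,s))x(s)\,d\mu_s$. Composing, I would compute the kernel $k_{AB}(t,\tau)=\int_{G_A}k_A(t,s){k}_B(s,\tau)\,d\mu_s$ of $AB$ and the kernel $k_{BF(A)}(t,\tau)=\int_{G_B}{k}_B(t,s)F_n(k_A(s,\tau))\,d\mu_s$ of $BF(A)$, obtaining that $AB=BF(A)$ is equivalent to
\[
\int\limits_{G_B}\bigl(k_{AB}(t,\tau)-\delta_0 {k}_B(t,\tau)\bigr)x(\tau)\,d\mu_\tau=\int\limits_{G_A}k_{BF(A)}(t,\tau)x(\tau)\,d\mu_\tau
\]
for every $x\in L_p(\mathbb{R}^l,\mu)$ and almost every $t$.

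Next, for each fixed $t$ I would read this identity as the hypothesis \ref{LemmaAllowInfSetsEqLp:cond1} of Lemma \ref{lemEqIntForAllLpFunctInfinitSets} with $f(\tau)=I_{G_B}(\tau)\bigl(k_{AB}(t,\tau)-\delta_0 {k}_B(t,\tau)\bigr)$ and $g(\tau)=I_{G_A}(\tau)k_{BF(A)}(t,\tau)$, the roles of $G_1,G_2$ being played by $G_B,G_A$. The integrability hypotheses \eqref{ConditionsRepreRn} are precisely what guarantees that $f,g\in L_q(\mathbb{R}^l,\mu)$ for almost every $t$ (the middle condition of \eqref{ConditionsRepreRn} taking care of the $\delta_0{k}_B$ term), so the lemma applies and yields, for almost every $t$: equality of the two kernels on $G=G_A\cap G_B$, vanishing of $f$ on $G_B\setminus G$, and vanishing of $g$ on $G_A\setminus G$. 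Unwinding these three statements and passing, via Fubini--Tonelli, from "for almost every $t$, almost every $\tau$" to "for almost every $(t,\tau)$" gives precisely conditions \ref{thmBothIntOPKernelsGenRn:cond1}--\ref{thmBothIntOPKernelsGenRn:cond3}. Conversely, assuming \ref{thmBothIntOPKernelsGenRn:cond1}--\ref{thmBothIntOPKernelsGenRn:cond3}, the implication \ref{LemmaAllowInfSetsEqLp:cond2}$\Rightarrow$\ref{LemmaAllowInfSetsEqLp:cond1} of Lemma \ref{lemEqIntForAllLpFunctInfinitSets} returns the displayed integral identity, hence $AB=BF(A)$.

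The routine computations are the same as in Theorem \ref{thmBothIntOPKernelsGen}; the genuinely new points, and the places where care is needed, are two. First, since $G_A$ and $G_B$ may have infinite measure, the applications of Fubini's theorem in forming the iterated kernels and in the compositions $AB$ and $BF(A)$ are no longer automatic from finiteness of the measure, and must be justified from the standing assumption that $A$ and $B$ (hence the relevant compositions) are well-defined bounded operators on $L_p(\mathbb{R}^l,\mu)$ together with the integrability conditions \eqref{ConditionsRepreRn}. Second, one must verify that Lemma \ref{lemEqIntForAllLpFunctInfinitSets} is genuinely applicable: it requires $1<q\le\infty$, which is exactly why $p=\infty$ is excluded and only $1\le p<\infty$ is admitted. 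I expect the bookkeeping that ensures $f,g\in L_q$ for almost every $t$ --- that is, checking that \eqref{ConditionsRepreRn} is precisely the right hypothesis and tracking the exceptional null sets in $t$ --- to be the main, though not deep, obstacle.
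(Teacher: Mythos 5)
Your proposal is correct and follows essentially the same route as the paper's own proof: compute the kernels of $A^m$, $BF(A)$ and $AB$ via Fubini and iterated kernels, reduce $AB=BF(A)$ to the equality of two integral functionals for almost every $t$, and then invoke Lemma \ref{lemEqIntForAllLpFunctInfinitSets} (with $G_1=G_B$, $G_2=G_A$) in place of Lemma \ref{lemEqIntForAllLpFunctFinitMeasureA}, the hypotheses \eqref{ConditionsRepreRn} supplying exactly the $L_q$ membership the lemma requires. Your identification of why $p=\infty$ must be excluded and of where the conditions \eqref{ConditionsRepreRn} enter matches the paper's reasoning.
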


\begin{proof}
  By applying Fubini theorem from \cite{AdamsG} and iterative kernels from  \cite{KrasnolskZabreyko}, we have
  by induction that for all $m\ge 1$,
  \begin{gather}\nonumber
  (A^mx)(t)=\int\limits_{G_A} k_{{m-1},A}(t,s)x(s)d\mu_s,\quad \mbox{ where }\\  \nonumber
  k_{m,A}(t,s)=\int\limits_{G_A} k_A(t,\tau)k_{{m-1},A}(\tau,s)d\mu_\tau,\quad m=1,\ldots,n,\quad
  k_{0,A}(t,s)=k_A(t,s).
\end{gather}
 It follows that
 \begin{eqnarray}\nonumber
   (F(A)x)(t)&=&\delta_0 x(t)+ \sum\limits_{j=1}^{n} \delta_j (A^j x)(t)\\ \nonumber
   &=&\delta_0 x(t)+\sum\limits_{j=1}^{n} \delta_{j} \int\limits_{G_A} k_{j-1,A}(t,s)x(s)d\mu_s \\ \nonumber
   &=&\delta_0 x(t)+\int\limits_{G_A} F_n(k_A(t,s))x(s)d\mu_s,\ \mbox{ where } \\ \nonumber
  F_0(K_A(t,s))=0, && F_n(k_A(t,s))=\sum_{j=1}^{n} \delta_j k_{{j-1},A}(t,s),\mbox{ if } n\ge 1
   \end{eqnarray}
 We now compute $BF(A)x$ and $(AB)x$. We have
 \begin{eqnarray*}
 (BF(A)x)(t)&=&\int\limits_{G_B} {k}_B(t,s)(F(A)x)(s)d\mu_s \\
  &=&\int\limits_{G_B} {k}_B(t,s) \big(\delta_0 x(s)+\int\limits_{G_A}  F_n(k_A(s,\tau))x(\tau)d\mu_\tau \big)d\mu_s\\
   &=& \delta_0\int\limits_{G_2} {k}_B(t,s)x(s)d\mu_s+ \int\limits_{G_A} \big(\ \int\limits_{G_B} {k}_B(t,s)F_n(k_B(s,\tau))d\mu_s\big)x(\tau) d\mu_\tau\\
   &=& \delta_0\int\limits_{G_B} {k}_B(t,s)x(s)d\mu_s+ \int\limits_{G_A} k_{BF}(t,\tau)x(\tau)d\mu_\tau
   \end{eqnarray*}
   where
   \begin{equation*}
   k_{BF(A)}(t,\tau)=\int\limits_{G_B} {k}_B(t,s) F_n(k_A(s,\tau))d\mu_s,
  \end{equation*}
  and
  \begin{eqnarray*}
   (ABx)(t)&=&\int\limits_{G_B} k_A(t,s)(Bx)(s)d\mu_s=\int\limits_{G_A} k_A(t,s)\big(\ \int\limits_{G_B} {k}_B(s,\tau)x(\tau)d\mu_\tau \big)d\mu_s\\
   &=&\int\limits_{G_B} \big(\int\limits_{G_A} k_A(t,s){k}_B(s,\tau)d\mu_s\big)x(\tau)d\mu_\tau =
  \int\limits_{G_B} k_{AB}(t,\tau) x(\tau)d\mu_\tau,
  \end{eqnarray*}
  where
  \begin{equation*}
    k_{AB}(t,\tau)=\int\limits_{G_A} k_A(t,s){k}_B(s,\tau)d\mu_s.
 \end{equation*}
 Thus, for all $x\in L_p(\mathbb{R}^l,\mu)$,  we have  $(ABx)(t)=(BF(A)x)(t)$ for almost every $t$  if and only if
 \begin{equation*}
   \int\limits_{G_B} ( k_{AB}(t,\tau)-\delta_0 {k}_B(t,\tau)) x(\tau)d\mu_\tau=\int\limits_{G_A} k_{BF(A)}(t,\tau)x(\tau)d\mu_\tau,
 \end{equation*}
 for almost every $t$. By applying Lemma \ref{lemEqIntForAllLpFunctInfinitSets},
 we conclude that $AB=BF(A)$ if and only if
 the following conditions are satisfied:
 \begin{enumerate}[label=\textup{\arabic*.}, ref=\arabic*]
   \item for almost every $(t,\tau)\in \mathbb{R}^l\times G$,
   \begin{equation*}
      \int\limits_{G_A} k_A(t,s){k}_B(s,\tau)d\mu_s-\delta_0{k}_B(t,\tau)  = \int\limits_{G_B} {k}_B(t,s) F_n(k_A(s,\tau))d\mu_s;
   \end{equation*}
   \item for almost every $(t,\tau)\in \mathbb{R}^l\times(G_B\setminus G)$,
   \begin{equation*}
     \int\limits_{G_A} k_A(t,s){k}_B(s,\tau)d\mu_s=\delta_0{k}_B(t,\tau);
   \end{equation*}
   \item for almost every $(t,\tau)\in \mathbb{R}^l\times(G_A\setminus G)$,
   \begin{equation*}
     \int\limits_{G_B} {k}_B(t,s) F_n(k_A(s,\tau))d\mu_s=0.
   \end{equation*}
 \end{enumerate}
 \qed
\end{proof}

The  Propositions \ref{PropConvolutionGeneralPolyR}, \ref{PropConvolutionMonomialR} and \ref{PropConvolutionGenPolyOnesided}  were proved in \cite{DjinjaEtAll_IntOpOverMeasureSpaces}.
\begin{proposition}\label{PropConvolutionGeneralPolyR}
For  $1< p<\infty$ and $L_p(\mathbb{R},\mu)$ on the standard Lebesgue measure space $(\mathbb{R},\Sigma,\mu)$,
let $A:L_p(\mathbb{R},\mu)\to L_p(\mathbb{R},\mu)$ and $B:L_p(\mathbb{R},\mu)\to L_p(\mathbb{R},\mu)$ be nonzero linear operators defined, for almost every $t$, by
\begin{equation} \label{OperatorsAandBinconvolutionForm}
  (Ax)(t)= \int\limits_{\mathbb{R}} \tilde{k}_A(t-s)x(s)d\mu_s,\quad (Bx)(t)= \int\limits_{\mathbb{R}} \tilde{k}_B(t-s)x(s)d\mu_s,
\end{equation}
where $\tilde{k}_A(\cdot)\in L_1(\mathbb{R},\mu) $, $\tilde{k}_B(\cdot)\in L_1(\mathbb{R},\mu)$, that is,
 \begin{gather*} %\label{IntegrabilityConditionKernelLpConvolution}
  \int\limits_{\mathbb{R}}|\tilde{k}_A(t)|d\mu_t <\infty,\qquad
    \int\limits_{\mathbb{R}}|\tilde{k}_B(t)|d\mu_t <\infty.
\end{gather*}
Let
$F(z)=\sum\limits_{j=1}^{n} \delta_j z^j$, where $\delta_j \in\mathbb{R}$ for $j=1,\ldots,n$.
Suppose that
\begin{gather*}
K_A(s)=\int\limits_{-\infty}^{\infty} \exp({-st})\tilde{k}_A(t)d\mu_t,\quad K_B(s)=\int\limits_{-\infty}^{\infty} \exp({-st})\tilde{k}_B(t)d\mu_t,
\end{gather*}
and the domains of $K_A(\cdot)$ and $K_B(\cdot)$, where the integrals exist and finite, are equal up to a set of measure zero.
If $k_B(\cdot)\in L_q(\mathbb{R},\mu)$,
$1<q<\infty$, $\frac{1}{p}+\frac{1}{q}=1$,
then
$
  AB=BF(A)
$
if and only if \
$
{\rm supp }\, K_B \, \cap \, {\rm supp }\, \big( K_A -\sum\limits_{j=1}^n \delta_j K_A^j\big)
$
has measure zero in $\mathbb{R}$,
\end{proposition}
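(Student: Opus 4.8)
The plan is to recognize that $A$ and $B$ are convolution operators, to reduce the operator identity $AB=BF(A)$ to a single convolution identity in $L_1(\mathbb{R},\mu)$ by means of Theorem \ref{thmBothIntOPKernelsGenRn}, and then to pass to the transform side, where convolution turns into pointwise multiplication.

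First I would take $k_A(t,s)=\tilde{k}_A(t-s)$, $k_B(t,s)=\tilde{k}_B(t-s)$ and $G_A=G_B=\mathbb{R}$, so that $G_A\setminus G=G_B\setminus G=\emptyset$ and conditions \ref{thmBothIntOPKernelsGenRn:cond2}, \ref{thmBothIntOPKernelsGenRn:cond3} of Theorem \ref{thmBothIntOPKernelsGenRn} become vacuous. Because $\tilde{k}_A,\tilde{k}_B\in L_1(\mathbb{R},\mu)$, Young's inequality (Theorem \ref{YoungThmConvolution}) shows that the iterated kernels $\tilde{k}_A^{\star j}$ (the $j$-fold convolution power), the combination $F_n(\tilde{k}_A)=\sum_{j=1}^n\delta_j\tilde{k}_A^{\star j}$, and the convolutions $\tilde{k}_A\star\tilde{k}_B$ and $\tilde{k}_B\star F_n(\tilde{k}_A)$ all lie in $L_1(\mathbb{R},\mu)$; together with the hypothesis $\tilde{k}_B\in L_q(\mathbb{R},\mu)$, which by translation and reflection invariance of the $L_q$ norm and again by Young also places $\tilde{k}_A\star\tilde{k}_B$ and $\tilde{k}_B\star F_n(\tilde{k}_A)$ in $L_q(\mathbb{R},\mu)$, this verifies the integrability requirements \eqref{ConditionsRepreRn}. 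Since $\delta_0=0$ here, Theorem \ref{thmBothIntOPKernelsGenRn} reduces $AB=BF(A)$ to the statement that $(\tilde{k}_A\star\tilde{k}_B)(t-\tau)=(\tilde{k}_B\star F_n(\tilde{k}_A))(t-\tau)$ for almost every $(t,\tau)\in\mathbb{R}^2$, which by the change of variables $(t,\tau)\mapsto(t-\tau,\tau)$ and Fubini's theorem is equivalent to the single identity $\tilde{k}_A\star\tilde{k}_B=\tilde{k}_B\star F_n(\tilde{k}_A)$ in $L_1(\mathbb{R},\mu)$; using commutativity, associativity and linearity of convolution, this in turn is the same as $\tilde{k}_B\star h_A=0$, where $h_A:=\tilde{k}_A-\sum_{j=1}^n\delta_j\tilde{k}_A^{\star j}\in L_1(\mathbb{R},\mu)$.

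Next I would move to the transform side. Since $\widehat{\tilde{k}_A^{\star j}}=K_A^j$ (by Fubini, where the defining integrals converge absolutely), on the common domain $D$ of $K_A$ and $K_B$ the transform of $h_A$ equals $K_A-\sum_{j=1}^n\delta_j K_A^j$ and the transform of $\tilde{k}_B\star h_A$ equals $K_B\big(K_A-\sum_{j=1}^n\delta_j K_A^j\big)$. Because the transform is injective on $L_1(\mathbb{R},\mu)$ and a product of measurable functions vanishes almost everywhere exactly when the set on which both factors are nonzero is a null set --- which is precisely the characterization of the zero divisors of convolution used in \cite{Doss1990ADS,TitchmarshDSA1929} --- the identity $\tilde{k}_B\star h_A=0$ holds if and only if ${\rm supp}\,K_B\cap{\rm supp}\big(K_A-\sum_{j=1}^n\delta_j K_A^j\big)$ has measure zero in $\mathbb{R}$, which is the asserted equivalence.

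The main obstacle is the transform step: one must be careful that the common domain $D$ appearing in the statement is exactly the set on which the convolution theorem $\widehat{f\star g}=\widehat f\,\widehat g$ and the identities $\widehat{\tilde{k}_A^{\star j}}=K_A^j$ are legitimate, that injectivity of the transform is available there in the appropriate sense, and that the passage from the support of a product to the intersection of the supports of its factors respects null sets; the hypotheses that $\tilde{k}_B\in L_q(\mathbb{R},\mu)$ and that the domains of $K_A$ and $K_B$ coincide up to a null set are precisely what makes these points work. The remaining ingredients --- stability of $L_1$ under convolution, the reduction through Theorem \ref{thmBothIntOPKernelsGenRn}, and the elementary statement about supports of products --- are routine.
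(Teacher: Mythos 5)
Your proposal is correct and follows exactly the route the paper intends: it invokes Theorem \ref{thmBothIntOPKernelsGenRn} with $G_A=G_B=\mathbb{R}$ (using Young's inequality together with the added hypothesis $\tilde{k}_B\in L_q(\mathbb{R},\mu)$ to secure \eqref{ConditionsRepreRn}, precisely as the paper's remark indicates), reduces the commutation relation to $\tilde{k}_B\star\big(\tilde{k}_A-\sum_{j=1}^n\delta_j\tilde{k}_A^{\star j}\big)=0$, and then characterizes this via the transforms $K_A,K_B$ and the zero-divisor results of \cite{Doss1990ADS,TitchmarshDSA1929}. The paper itself defers the detailed argument to \cite{DjinjaEtAll_IntOpOverMeasureSpaces}, but your write-up supplies the same steps, including the one genuine delicacy (injectivity of the transform and the passage from a vanishing product to a null intersection of supports), which you correctly isolate.
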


\begin{proposition}\label{PropConvolutionMonomialR}
For  $1< p<\infty$ and $L_p(\mathbb{R},\mu)$ on the standard Lebesgue measure space $(\mathbb{R},\Sigma,\mu)$,
let $A:L_p(\mathbb{R},\mu)\to L_p(\mathbb{R},\mu),\  B:L_p(\mathbb{R},\mu)\to L_p(\mathbb{R},\mu)$
be nonzero linear operators defined, for almost every $t$, by
\begin{equation*} %\label{OperatorsAandBinconvolutionForm2}
  (Ax)(t)= \int\limits_{\mathbb{R}} \tilde{k}_A(t-s)x(s)d\mu_s,\quad (Bx)(t)= \int\limits_{\mathbb{R}} \tilde{k}_B(t-s)x(s)d\mu_s,
\end{equation*}
where  $\tilde{k}_A(\cdot)\in L_1(\mathbb{R},\mu) $, $\tilde{k}_B(\cdot)\in L_1(\mathbb{R},\mu)$, that is,
 \begin{equation*} %\label{IntegrabilityConditionKernelLpConvolution2}
  \int\limits_{\mathbb{R}}|\tilde{k}_A(t)|d\mu_t <\infty,\quad \int\limits_{\mathbb{R}}|\tilde{k}_B(t)|d\mu_t <\infty
\end{equation*}
and the subscripts in $\mu_s$ and $\mu_t$ indicate the variable of integration.
Suppose that
\[
  \int\limits_{-\infty}^{\infty} \exp({-st})\tilde{k}_A(t)d\mu_t={K}_A(s),\quad \int\limits_{-\infty}^{\infty} \exp({-st})\tilde{k}_B(t)d\mu_t={K}_B(s)
\]
and the domains of $K_A(\cdot)$ and $K_B(\cdot)$, where the integrals exist and finite, are equal up to a set of measure zero.
If $k_B(\cdot)\in L_q(\mathbb{R},\mu)$, %$(\tilde{k}_A \star\tilde{k}_B)(\cdot)\in L_q(\mathbb{R})$, $\tilde{k}_B\star \left(\underbrace{\tilde{k}_A\star\tilde{k}_A\star \ldots \star\tilde{k}_A}_{n\ \mbox{times}}\right)(\cdot)\in L_q(\mathbb{R})$,
 $1<q<\infty$, $\frac{1}{p}+\frac{1}{q}=1$,
 then, %there are no non-zero kernels $\tilde{k}_A(\cdot)$ and $\tilde{k}_B(\cdot)$ such that
$AB=\delta BA^n$,
for a fixed $n\in\mathbb{Z},\ n\ge 2$ and $\delta\in\mathbb{R}\setminus\{0\}$ if and only if $(\tilde{k}_A \star \tilde{k}_B)(t)=0$ almost everywhere.
\end{proposition}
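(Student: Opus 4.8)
The plan is to transcribe the operator identity $AB=\delta BA^{n}$ into a pointwise identity for the transforms $K_{A},K_{B}$ and then to exploit the analyticity of $K_{A}$. As in the proof of Proposition~\ref{PropConvolutionGeneralPolyR}, $A^{n}$ is the convolution operator with kernel the $n$-fold convolution $\tilde{k}_{A}^{\star n}$, which lies in $L_{1}(\mathbb{R},\mu)$ by Young's inequality (Theorem~\ref{YoungThmConvolution}); hence $AB$ and $BA^{n}$ are the bounded convolution operators with kernels $\tilde{k}_{A}\star\tilde{k}_{B}$ and $\tilde{k}_{B}\star\tilde{k}_{A}^{\star n}$, respectively. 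Since a bounded convolution operator on $L_{p}(\mathbb{R},\mu)$, $1<p<\infty$, determines its $L_{1}$ convolution kernel uniquely, $AB=\delta BA^{n}$ is equivalent to $\tilde{k}_{A}\star\tilde{k}_{B}=\delta\,\tilde{k}_{B}\star\tilde{k}_{A}^{\star n}$ almost everywhere. Applying the transform $h\mapsto\int e^{-st}h(t)\,d\mu_{t}$, which carries convolution to pointwise product and is injective on $L_{1}(\mathbb{R},\mu)$, and using that $K_{A}$ and $K_{B}$ share, up to a null set, the same domain, this is in turn equivalent to $K_{A}K_{B}=\delta K_{A}^{\,n}K_{B}$ on that domain, i.e. to
\[
K_{A}(s)\,K_{B}(s)\,\big(1-\delta K_{A}(s)^{\,n-1}\big)=0
\]
for almost every $s$ there. (Alternatively, the same identity follows from Proposition~\ref{PropConvolutionGeneralPolyR} specialized to $F(z)=\delta z^{n}$, upon noting that $\mathrm{supp}(K_{A}-\delta K_{A}^{n})=\{\,K_{A}\neq0\,\}\setminus\{\,K_{A}^{\,n-1}=1/\delta\,\}$.)

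From this the implication ``$\Leftarrow$'' is immediate: if $(\tilde{k}_{A}\star\tilde{k}_{B})(t)=0$ almost everywhere, then $K_{A}K_{B}\equiv0$ on the common domain, the displayed identity holds, and so $AB=\delta BA^{n}$. For ``$\Rightarrow$'', assume the displayed identity and suppose, towards a contradiction, that $\tilde{k}_{A}\star\tilde{k}_{B}$ is not the zero element of $L_{1}(\mathbb{R},\mu)$. Then $K_{A}K_{B}$ is not identically zero, so the set $\{\,s:\;K_{A}(s)K_{B}(s)\neq0\,\}$ is non-null, and on it we necessarily have $1-\delta K_{A}(s)^{\,n-1}=0$, that is, $K_{A}(s)^{\,n-1}=1/\delta$.

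The decisive step is to exclude this. By the hypothesis on the domains of $K_{A}$ and $K_{B}$, the common domain is a nondegenerate region, which may be taken connected, and on it $K_{A}$ is given by the Laplace-type integral $\int e^{-st}\tilde{k}_{A}(t)\,d\mu_{t}$ and is therefore analytic. If the analytic function $K_{A}^{\,n-1}-1/\delta$ vanishes on a non-null subset, it vanishes on a set with an accumulation point, hence identically; thus $K_{A}$ is constant on the domain and equal to an $(n-1)$-th root of $1/\delta$, in particular a nonzero constant. But $\tilde{k}_{A}\in L_{1}(\mathbb{R},\mu)$, so the Riemann--Lebesgue lemma forces $K_{A}(s)\to0$ as $s$ tends to infinity along the imaginary axis, where the defining integral is exactly the Fourier transform of $\tilde{k}_{A}$; this contradicts $K_{A}$ being a nonzero constant. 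Hence $\tilde{k}_{A}\star\tilde{k}_{B}=0$ almost everywhere, as required. I expect this analyticity argument to be the main obstacle: it is the only place where the assumption on the Laplace-transform domains is genuinely used, and it also explains the hypothesis $n\ge2$, since for $n=1$ the factor $1-\delta K_{A}^{\,n-1}$ collapses to the constant $1-\delta$ and the mechanism degenerates. The remaining ingredients --- the iterated-kernel formula for $A^{n}$, Young's inequality, uniqueness of convolution kernels and elementary properties of the Laplace transform --- are routine and already appear in the proofs of Theorem~\ref{thmBothIntOPKernelsGen} and Proposition~\ref{PropConvolutionGeneralPolyR}.
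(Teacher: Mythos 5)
The paper itself does not print a proof of Proposition~\ref{PropConvolutionMonomialR}: it states that this proposition ``was proved in'' \cite{DjinjaEtAll_IntOpOverMeasureSpaces}. Measured against the apparatus the paper does set up, your route is exactly the intended one. You reduce the operator identity to the kernel identity $\tilde{k}_A\star\tilde{k}_B=\delta\,\tilde{k}_B\star\tilde{k}_A^{\star n}$ (the paper would do this through Theorem~\ref{thmBothIntOPKernelsGenRn} and Lemma~\ref{lemEqIntForAllLpFunctInfinitSets}, which is why the hypothesis $\tilde{k}_B\in L_q$ is present; your approximate-identity uniqueness argument achieves the same thing without it), pass to the transform to obtain $K_AK_B\bigl(1-\delta K_A^{\,n-1}\bigr)=0$ --- which is precisely Proposition~\ref{PropConvolutionGeneralPolyR} specialized to $F(z)=\delta z^n$, as you note --- and then eliminate the branch $K_A^{\,n-1}=1/\delta$ by the identity theorem plus Riemann--Lebesgue along the imaginary axis. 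That last step is indeed the decisive one, your computation of $\mathrm{supp}(K_A-\delta K_A^n)$ is correct, the sufficiency direction is correct, and your remark about why $n\ge 2$ matters is apt.

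The one genuine soft spot is the assertion that ``the common domain is a nondegenerate region.'' The stated hypothesis only says that the two convergence domains coincide up to a null set; it does not say they have positive length. For $\tilde{k}_A,\tilde{k}_B\in L_1(\mathbb{R},\mu)$ without exponential decay the bilateral Laplace integrals converge only at $s=0$, the two domains are then equal (both $\{0\}$), and every step that invokes injectivity of the transform or analyticity collapses. This is not merely a gap in your write-up: under that literal reading the proposition itself fails. Take $\tilde{k}_A$ to be the inverse Fourier transform of a continuous trapezoid equal to $1$ on $[-1,1]$ and supported in $[-2,2]$, and $\tilde{k}_B$ the inverse Fourier transform of a nonzero $C_c^\infty$ function supported in $[-1,1]$; both kernels lie in $L_1\cap L_q$, neither decays exponentially (so both Laplace domains are $\{0\}$ and hence equal), the Fourier transforms satisfy $\widehat{\tilde{k}}_B\,\widehat{\tilde{k}}_A\,(1-\widehat{\tilde{k}}_A)\equiv 0$, so $AB=BA^2$, yet $\tilde{k}_A\star\tilde{k}_B\ne 0$. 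You should therefore state explicitly that you read the domain hypothesis as ``the common domain of convergence is an interval of positive length containing $0$'' --- evidently the authors' intent, since otherwise the support conditions in Proposition~\ref{PropConvolutionGeneralPolyR} are vacuous --- and observe that your identity-theorem argument is exactly where that strengthened hypothesis is consumed. With that reading made explicit, your proof is complete and correct.
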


\begin{remark}{\rm
Let $(\mathbb{R},\Sigma,\mu)$ be the standard Lebesgue measure space.
Consider operators $A$ and $B$ defined in \eqref{OperatorsAandBinconvolutionForm}, that is, $A:\, L_p(\mathbb{R},\mu)\to L_p(\mathbb{R},\mu)$,
$B:\, L_p(\mathbb{R},\mu)\to L_p(\mathbb{R},\mu)$, $1< p<\infty$, and for almost every $t$,
\begin{align*}
&  (Ax)(t)= \int\limits_{\mathbb{R}} \tilde{k}_A(t-s)x(s)d\mu_s,\quad (Bx)(t)= \int\limits_{\mathbb{R}} \tilde{k}_B(t-s)x(s)d\mu_s,\\
& \quad \mbox{{\rm(}the subscript $s$ in $\mu_s$ indicates variable of integration{\rm)} }
\end{align*}
with $\tilde{k}_A(\cdot)\in L_1(\mathbb{R},\mu)$, $\tilde{k}_B(\cdot)\in L_1(\mathbb{R},\mu)$. If $(\tilde{k}_A\star \tilde{k}_B)(\cdot)\in L_q(\mathbb{R},\mu)$,  $1<q<\infty$, $\frac{1}{p}+\frac{1}{q}=1$, then
  $AB=BA$. In fact, by applying Fubini theorem for composition of operators $A$, $B$ and Lemma  \ref{lemEqIntForAllLpFunctInfinitSets}
we have $AB=BA$ if and only if
\begin{eqnarray*}
\int\limits_{\mathbb{R}} \tilde{k}_A(t-s) \tilde{k}_B(s-\tau)ds &=&\int\limits_{\mathbb{R}} \tilde{k}_B(t-s) \tilde{k}_A(s-\tau)d\mu_s
 \Longleftrightarrow \\
(\tilde{k}_A\star \tilde{k}_B)(t-\tau)&=&(\tilde{k}_B\star \tilde{k}_A)(t-\tau)
\end{eqnarray*}
for almost every $(t,\tau)\in\mathbb{R}^2$. This holds by the commutativity property of convolution.
If $\tilde{k}_A(\cdot)\in L_q(\mathbb{R},\mu)$ and $\tilde{k}_B(\cdot) \in L_q(\mathbb{R},\mu)$, then the condition $(\tilde{k}_A\star \tilde{k}_B)(\cdot)\in L_q(\mathbb{R},\mu)$ can be replaced by the sufficient condition $\tilde{k}_B(\cdot)\in L_q(\mathbb{R},\mu)$. It follows by \cite[Theorem 4.15]{BrezisFASobolevSpaces} that $(\tilde{k}_A\star \tilde{k}_B)(\cdot)\in L_q(\mathbb{R},\mu)$. However, this is a sufficient condition and not necessary. So, condition $(\tilde{k}_A\star \tilde{k}_B)(\cdot)\in L_q(\mathbb{R},\mu)$ is weaker than to require that $\tilde{k}_B(\cdot)\in L_q(\mathbb{R},\mu)$.
}\end{remark}

\begin{proposition}\label{PropConvolutionGenPolyOnesided}
For  $1< p<\infty$ and $L_p([0,\infty[,\mu)$ on the standard Lebesgue measure space $([0,\infty[,\Sigma,\mu)$,
let \[A:L_p([0,\infty[,\mu)\to L_p([0,\infty[,\mu),\  B:L_p([0,\infty[,\mu)\to L_p([0,\infty[,\mu)\]
be non-zero linear operators defined, for almost every $t$, by
\begin{align*}
&
\begin{array}{lll}
(Ax)(t) &=& \int\limits_{0}^\infty \tilde{k}_A(t-s)I_{[0,\infty[}(t-s)x(s)d\mu_s, \\
(Bx)(t) &=& \int\limits_{0}^\infty \tilde{k}_B(t-s)I_{[0,\infty[}(t-s)x(s)d\mu_s,
\end{array}
%\label{OperatorsAandBinconvolutionFormHalfInt}
\\
&\tilde{k}_A(\cdot)\in L_1([0,\infty[,\mu),\ \tilde{k}_B(\cdot)\in L_1([0,\infty[,\mu)   \\
& \text{that is,}\ \textstyle \int\limits_{0}^\infty|\tilde{k}_A(t)|d\mu_t <\infty,\ \int\limits_{0}^\infty|\tilde{k}_B(t)|d\mu_t <\infty,
\end{align*}
where $I_{E}(\cdot)$ is the indicator function of the set $E$.
If $\tilde{k}_B(\cdot)\in L_q([0,\infty[,\mu)$, %$(\tilde{k}_A \star\tilde{k}_B)(\cdot)\in L_q(\mathbb{R})$, $\tilde{k}_B\star(\underbrace{\tilde{k}_A\star\tilde{k}_A{\star} \ldots {\star}\tilde{k}_A}_{n\ \mbox{times}})(\cdot)\in L_q(\mathbb{R})$,
$1<q<\infty$, $\frac{1}{p}+\frac{1}{q}=1$,
 then, there are no non-zero operators  $A$ and $B$ satisfying
$AB=\delta BA^n
$
for a fixed $n\in \mathbb{Z},\ n\ge 2$, $\delta\in\mathbb{R}\setminus\{0\}$.  %the following conditions are fulfilled:
\end{proposition}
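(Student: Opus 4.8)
The plan is to reduce the operator identity $AB=\delta BA^{n}$ to an identity between the underlying convolution kernels, and then to exploit the fact that the one-sided Laplace transform of an $L_{1}$ function is analytic on a half-plane — precisely the feature that is absent for the two-sided transform on $\mathbb{R}$ used in Proposition \ref{PropConvolutionMonomialR}, and which forces nontrivial zero divisors not to exist here.

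First I would identify the kernels of the operators in play. Composing the one-sided (Volterra-type) convolution operators and applying Fubini's theorem, together with Young's inequality (Theorem \ref{YoungThmConvolution}) to ensure that the $m$-fold convolution power $\tilde{k}_{A}^{\star m}:=\tilde{k}_{A}\star\cdots\star\tilde{k}_{A}$ lies in $L_{1}([0,\infty[,\mu)$ for every $m\ge 1$, one sees that $A^{n}$, $AB$ and $BA^{n}$ are again one-sided convolution operators, with kernels $\tilde{k}_{A}^{\star n}(t-s)I_{[0,\infty[}(t-s)$, $(\tilde{k}_{A}\star\tilde{k}_{B})(t-s)I_{[0,\infty[}(t-s)$ and $(\tilde{k}_{B}\star\tilde{k}_{A}^{\star n})(t-s)I_{[0,\infty[}(t-s)$. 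Since $\tilde{k}_{B}\in L_{q}([0,\infty[,\mu)$, Young's inequality gives $\tilde{k}_{A}\star\tilde{k}_{B}\in L_{q}$ and $\tilde{k}_{B}\star\tilde{k}_{A}^{\star n}\in L_{q}$, so for every $t\ge 0$ the two kernels, viewed as functions of $s$, lie in $L_{q}([0,\infty[,\mu)$. Arguing exactly as in the proof of Theorem \ref{thmBothIntOPKernelsGenRn} — Lemma \ref{lemEqIntForAllLpFunctInfinitSets} applied with $G_{1}=G_{2}=[0,\infty[$ (so the exceptional sets are empty), followed by Fubini's theorem — the relation $AB=\delta BA^{n}$ is equivalent to $(\tilde{k}_{A}\star\tilde{k}_{B})(u)=\delta(\tilde{k}_{B}\star\tilde{k}_{A}^{\star n})(u)$ for almost every $u\ge 0$. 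Setting $h=\tilde{k}_{A}\star\tilde{k}_{B}$ and using commutativity and associativity of convolution, this becomes $h=\delta\, h\star\tilde{k}_{A}^{\star(n-1)}$.

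Next I would take one-sided Laplace transforms $K_{A}(z)=\int_{0}^{\infty}e^{-zt}\tilde{k}_{A}(t)\,d\mu_{t}$ and $K_{B}(z)=\int_{0}^{\infty}e^{-zt}\tilde{k}_{B}(t)\,d\mu_{t}$, which for $L_{1}([0,\infty[,\mu)$ kernels are bounded and continuous on $\{\mathrm{Re}\, z\ge 0\}$ and analytic on the connected open half-plane $D=\{\mathrm{Re}\, z>0\}$, and turn convolution into multiplication. The identity $h=\delta\, h\star\tilde{k}_{A}^{\star(n-1)}$ transforms into $K_{A}(z)K_{B}(z)\bigl(1-\delta K_{A}(z)^{\,n-1}\bigr)=0$ on $D$. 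Because the ring of analytic functions on the connected domain $D$ has no zero divisors, one of the three factors vanishes identically on $D$: either $K_{A}\equiv 0$, or $K_{B}\equiv 0$, or $\delta K_{A}(z)^{n-1}\equiv 1$. The last alternative is impossible, since $K_{A}(z)\to 0$ as $z\to+\infty$ along the real axis by dominated convergence, whence $\delta K_{A}(z)^{n-1}\to 0\ne 1$. By injectivity of the Laplace transform on $L_{1}([0,\infty[,\mu)$, $K_{A}\equiv 0$ forces $\tilde{k}_{A}=0$ a.e. and hence $A=0$, while $K_{B}\equiv 0$ forces $B=0$; either conclusion contradicts the non-triviality of $A$ and $B$, proving the proposition. (Alternatively, $h=\delta\, h\star\tilde{k}_{A}^{\star(n-1)}$ implies $h\equiv 0$, and then $\tilde{k}_{A}\star\tilde{k}_{B}\equiv 0$ on $[0,\infty[$ forces $\tilde{k}_{A}=0$ or $\tilde{k}_{B}=0$ a.e. by the Titchmarsh convolution theorem \cite{TitchmarshDSA1929,Doss1990ADS}.)

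The crux — and the only place where the one-sidedness is genuinely used — is the passage from $h=\delta\, h\star\tilde{k}_{A}^{\star(n-1)}$ to the conclusion that $A$ or $B$ must vanish: this is where analyticity on a half-plane (equivalently, the Titchmarsh theorem) is needed, in sharp contrast to the two-sided setting of Proposition \ref{PropConvolutionMonomialR}, in which $\widehat{\tilde{k}_{A}}$ can vanish on a set of positive measure without $\tilde{k}_{A}$ vanishing and genuine nonzero representations exist. The reduction to the kernel identity is routine but does require the integrability bookkeeping (Young's inequality placing the kernels in $L_{q}$, so that Lemma \ref{lemEqIntForAllLpFunctInfinitSets} applies) together with a Fubini argument to descend to an almost-everywhere identity in the single variable $u=t-s$.
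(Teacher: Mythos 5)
Your argument is correct and follows essentially the route the paper intends for this statement (the proof itself is deferred to \cite{DjinjaEtAll_IntOpOverMeasureSpaces}): reduce $AB=\delta BA^{n}$ to the kernel identity $\tilde{k}_A\star\tilde{k}_B=\delta\,\tilde{k}_B\star\tilde{k}_A^{\star n}$ via Fubini, Young's inequality and Lemma \ref{lemEqIntForAllLpFunctInfinitSets}, and then rule out nonzero solutions using the one-sided Laplace transform and the absence of zero divisors for one-sided convolution (Titchmarsh--Doss), exactly the mechanism the paper invokes for Propositions \ref{PropConvolutionGeneralPolyR}--\ref{PropConvolutionGenPolyOnesided}. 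The only cosmetic caveat is that in your parenthetical alternative the implication ``$h=\delta\,h\star\tilde{k}_A^{\star(n-1)}\Rightarrow h\equiv 0$'' still needs the analyticity (or an iteration) argument, since Titchmarsh alone only yields $\inf\operatorname{supp}\tilde{k}_A^{\star(n-1)}=0$; your main Laplace-transform argument already supplies this.
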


\begin{remark}
Propositions \ref{PropConvolutionGeneralPolyR},  \ref{PropConvolutionMonomialR} and  \ref{PropConvolutionGenPolyOnesided} have been considered in
\cite{DjinjaEtAll_IntOpOverMeasureSpaces}, however here we add the condition $k_B(\cdot)\in L_q(\mathbb{R},\mu)$. By Young theorem (Theorem \ref{YoungThmConvolution}), this condition implies \eqref{ConditionsRepreRn}, hence both Theorem \ref{thmBothIntOPKernelsGenRn} and Lemma \ref{lemEqIntForAllLpFunctInfinitSets} can be applied.
\end{remark}

\section{Representations of commutation relation by Volterra operators }\label{SecVoltOpRepre}

Let $([\alpha,\beta],\Sigma,\mu)$ be the standard Lebesgue measure in the interval $[\alpha,\beta]$, $\alpha,\beta\in\mathbb{R}$, $\alpha<\beta$. From now on we denote $L_p([\alpha,\beta],\mu)=L_p[\alpha,\beta]$, $1\leq p\leq \infty$ and $d\mu_s=ds$.
We consider simple functions,
\begin{equation*}
\resizebox{0.95\hsize}{!}{$\displaystyle a(t)=\sum_{k=1}^{\infty} a_k I_{D_{a_k}}(t), \, b(t)=\sum_{i=1}^{\infty} b_i I_{D_{a_i}}(t),\, c(t)=\sum_{j=1}^{\infty} c_j I_{D_{c_j}}(t),\, e(t)=\sum_{l=1}^{\infty} e_l I_{D_{e_l}}(t), $}
\end{equation*}
where $\{D_{a_k}\}$, $\{D_{b_i}\}$, $\{ D_{c_j}\}$, $\{D_{e_l}\}$ are measurable partitions of $[\alpha,\beta]$,
$a_k$, $b_i$, $c_j$, $e_l$ are constants, $I_E$ is the indicator (characteristic) function of the set $E$.
%\begin{definition}
%We say that partition $\{D_{a_k}\}$ of interval $[\alpha,\beta]$ is monotonic increasing if for all $k_1,k_2\in \mathbb{Z}_{>0}$, with $k_1<k_2$ and, for all $t_1\in D_{a_{k_1}}$ and all $t_2\in D_{a_{k_2}}$, $t_1\leq t_2$.
%
%if for each pair of the partition satisfies $\sup D_{a_{k_1}} \leq \inf D_{a_{k_2}}$ for all $k_1,k_2$ with $k_1< k_2$.
%\end{definition}
 Whenever we consider such simple functions, we assume that
each pair of sets in each partition is such that
$\sup D_{a_{k_1}} \leq \inf D_{a_{k_2}}$ for all $k_1,k_2$ with $k_1< k_2$,
%$t_1\leq t_2$ for all $t_1\in D_{a_{k_1}}$ and for all $t_2\in D_{a_{k_2}}$, and all $k_1,k_2$ satisfying $k_1 < k_2$.
and moreover
$$ \bigcup\limits_{i,j,k,l=1}^{\infty}\{ D_{a_k} \cap D_{b_i} \cap D_{c_j}\cap D_{e_l} \}=\bigcup\limits_{m=1}^\infty P_m $$ is a partition of $[\alpha,\beta]$ such that
each pair of set in the  partition satisfies the following:  %$ t_1\leq t_2$ for all $t_1\in P_{m_{1}}$ and for all $t_2\in P_{m_{2}}$ and, all $m_1,m_2$ satisfying $m_1 < m_2$.
$\sup P_{m_{1}} \leq \inf P_{m_{2}}$ for all $m_1,m_2$ with $m_1< m_2$.

\begin{proposition}\label{propOpIntVoltTypeSimple}
Let  $1\leq p\leq\infty$, $1\leq q\leq \infty$ be such that $\frac{1}{p}+\frac{1}{q}=1$, and
$$A:\,L_p(\mathbb{[\alpha,\beta]})\to L_p(\mathbb{[\alpha,\beta]}),\quad B:\,L_p(\mathbb{[\alpha,\beta]})\to L_p(\mathbb{[\alpha,\beta]})$$
be linear operators defined, for almost every $t$, by
\begin{gather*} % \label{LinearVolterraOperatorSplittedKernels}
(Ax)(t)= \int\limits_{\alpha}^{t} a(t)c(s)x(s)ds,\quad (Bx)(t)= \int\limits_{\alpha}^{t} b(t)e(s)x(s)ds,
\end{gather*}
where $\alpha$ is a real number, $a, b, c, e$ are measurable simple functions,  $a, b \in L_p(\mathbb{[\alpha,\beta]})$ and $c, e\in L_q(\mathbb{[\alpha,\beta]})$. Let  $F(z)=\sum\limits_{n=0}^{\deg(F)}\delta_n z^n$, $\delta_n\in\mathbb{R}$, $n=0,\ldots,\deg(F)$, $\deg(F)\ge 2$.

If  $AB=BF(A)$, then
$
 {\rm supp}\, \{ abce\}
$
is a set of measure zero.
\end{proposition}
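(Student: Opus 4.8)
The plan is to realize $A$ and $B$ as bounded linear integral operators on $L_p[\alpha,\beta]$ with kernels $k_A(t,s)=a(t)c(s)I_{[\alpha,t]}(s)$ and $k_B(t,s)=b(t)e(s)I_{[\alpha,t]}(s)$ (boundedness follows from H\"older's inequality, using $a,b\in L_p[\alpha,\beta]$ and $c,e\in L_q[\alpha,\beta]$), and then to apply Theorem~\ref{thmBothIntOPKernelsGen} with $G_A=G_B=[\alpha,\beta]$. Since then $G_A\setminus G=G_B\setminus G=\emptyset$, only condition~\ref{thmBothIntOPKernelsGen:cond1} is non-vacuous, so $AB=BF(A)$ is equivalent to that single condition. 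To spell it out I would first establish, by induction on $n$ via Fubini's theorem together with the substitution $w=\Phi(r)-\Phi(s)$, where $\Phi(t)=\int_{\alpha}^{t}a(r)c(r)\,dr$, the closed form of the iterated Volterra kernel
\[
k_{A^{n}}(t,s)=a(t)c(s)\,\frac{(\Phi(t)-\Phi(s))^{\,n-1}}{(n-1)!}\,I_{[\alpha,t]}(s),\qquad n\ge 1 ;
\]
equivalently, one may invoke the iterated-kernel computation already carried out in the proof of Theorem~\ref{thmBothIntOPKernelsGen}.

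Substituting $k_A$, $k_B$ and these $k_{A^n}$ into condition~\ref{thmBothIntOPKernelsGen:cond1} and restricting to the support of the indicator functions, the relation $AB=BF(A)$ becomes the requirement that, for almost every $(t,s)$ with $\alpha\le s\le t\le\beta$,
\[
a(t)e(s)\int_{s}^{t}b(r)c(r)\,dr=\delta_0\,b(t)e(s)+b(t)c(s)\sum_{n=1}^{N}\frac{\delta_n}{(n-1)!}\int_{s}^{t}a(r)e(r)(\Phi(r)-\Phi(s))^{\,n-1}\,dr,
\]
where $N=\deg(F)\ge 2$ and $\delta_N\ne 0$.

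The heart of the argument is then to localize this identity to the blocks $P_m$ of the refined partition of $[\alpha,\beta]$. By the standing ordering hypothesis each $P_m$ coincides, up to a set of measure zero, with an interval, so $a,b,c,e$ are constant on $P_m$ with values $\hat a_m,\hat b_m,\hat c_m,\hat e_m$, and for $s\le t$ in $P_m$ one has $[s,t]\subseteq P_m$ and $\Phi(r)-\Phi(s)=\hat a_m\hat c_m(r-s)$. All the integrals above are then elementary, and, writing $u=t-s$, the identity collapses (all the $t$-dependence being absorbed into $u$) to
\[
\hat a_m\hat b_m\hat c_m\hat e_m\,u=\delta_0\hat b_m\hat e_m+\hat a_m\hat b_m\hat c_m\hat e_m\sum_{n=1}^{N}\frac{\delta_n}{n!}\,(\hat a_m\hat c_m)^{n-1}u^{n},
\]
which must hold for almost every $u$ in an interval of positive length whenever $\mu(P_m)>0$, hence identically in $u$. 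Comparing the coefficients of $u^{N}$ — this is precisely where $\deg(F)\ge 2$ and $\delta_N\ne 0$ are used — forces $\hat a_m^{N}\hat b_m\hat c_m^{N}\hat e_m=0$, so one of the four constants vanishes and $abce\equiv 0$ on $P_m$. Since $[\alpha,\beta]=\bigcup_m P_m$ up to a null set, ${\rm supp}\,\{abce\}$ is contained in the union of those $P_m$ with $\mu(P_m)=0$, which is a null set; therefore ${\rm supp}\,\{abce\}$ has measure zero.

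The routine parts are the kernel bookkeeping and the elementary integrations over $P_m$. The two points demanding care are the induction establishing the closed form of $k_{A^{n}}$, and the passage from almost-everywhere equality on the triangle $\{s\le t\}\cap(P_m\times P_m)$ to a genuine polynomial identity in $u$: this last step is exactly where the hypothesis that the refined blocks $P_m$ are essentially intervals is needed, both so that $[s,t]\subseteq P_m$ (hence $a,b,c,e$ and $\Phi$ have the simple form above on $[s,t]$) and so that $u=t-s$ ranges over a full interval of positive length.
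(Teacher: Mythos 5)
Your argument is correct, and it takes a genuinely different route from the paper's. The paper proves this proposition by brute force: it expands $ABx$, $A^nx$ and $BA^nx$ as explicit sums over the partition blocks, evaluates at the single test function $x\equiv 1$, and observes that on the first block $P_1$ the function $(ABx)(t)$ is a polynomial of degree $2$ in $t$ while $(BF(A)x)(t)$ contains the term $\delta_{\deg F}\,b_1e_1a_1^{\deg F}c_1^{\deg F}\,t^{\deg F+1}/(\deg F+1)!$, whose coefficient must therefore vanish; it then runs an induction over the ordered blocks $P_m$, splitting every integral at $t_m=\sup P_m$ and carrying along the boundary terms $\gamma_{c_j,b_i,e_l}$, etc. You instead encode the full operator identity once and for all as the kernel identity of condition \ref{thmBothIntOPKernelsGen:cond1} of Theorem \ref{thmBothIntOPKernelsGen} (legitimate here since $G_A=G_B=[\alpha,\beta]$ has finite measure and the other two conditions are vacuous), use the closed form $k_{A^n}(t,s)=a(t)c(s)(\Phi(t)-\Phi(s))^{n-1}/(n-1)!$ of the iterated Volterra kernel, and localize to a single block: because the integrals are $\int_s^t$ with $s,t\in P_m$ and the standing ordering hypothesis makes each $P_m$ an interval, the identity collapses to a polynomial identity in $u=t-s$ on each block separately, and comparing the coefficient of $u^{\deg F}$ (absent on the left since $\deg F\ge 2$) gives $\hat a_m\hat b_m\hat c_m\hat e_m=0$ directly. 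This eliminates both the choice of test function and the induction over blocks with its $\gamma$-bookkeeping, and as a bonus your block identity also yields the extra constraint $\delta_0\hat b_m\hat e_m=0$ from the constant term; the price is the (routine, but necessary to verify) induction establishing the closed form of $k_{A^n}$ via the absolute continuity of $\Phi$. Both proofs ultimately rest on the same mechanism — the unmatched top-degree coefficient coming from $\delta_{\deg F}\neq 0$ — so the two arguments are consistent, and yours is the tidier of the two.
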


\begin{proof}
 Let $n\ge 1$ and we compute $AB$, $A^n$ and $BA^n$. Let $a$, $b$, $c$ and $e$ be measurable simple functions defined as follows
\begin{equation*}
\resizebox{0.95\hsize}{!}{$\displaystyle a(t)=\sum_{k=1}^{\infty} a_k I_{D_{a_k}}(t), \, b(t)=\sum_{i=1}^{\infty} b_i I_{D_{a_i}}(t),\, c(t)=\sum_{j=1}^{\infty} c_j I_{D_{c_j}}(t),\, e(t)=\sum_{l=1}^{\infty} e_l I_{D_{e_l}}(t), $}
\end{equation*}
where $\{D_{a_k}\}$, $\{D_{b_i}\}$, $\{ D_{c_j}\}$, $\{D_{e_l}\}$ are measurable partitions of $[\alpha,\beta]$,
$a_k$, $b_i$, $c_j$, $e_l$ are constants, $I_E$ is the indicator (characteristic) function of the set $E$.
We have
\begin{eqnarray*}
 && (ABx)(t)=\int\limits_{\alpha}^{t} a(t)c(s)(Bx)(s)ds=\int\limits_{\alpha}^{t} a(t)c(s_1)\big(\int\limits_{\alpha}^{s_1} b(s_1)e(s_2)x(s_2)ds_2\big)ds_1 \\
  &&=\int\limits_{\alpha}^{t} \sum_{k=1}^\infty \sum_{j=1}^\infty a_kc_j I_{D_{a_k}}(t)
  I_{D_{c_j}}(s_1)
  \big(\int\limits_{\alpha}^{s_1} \sum_{i=1}^\infty \sum_{l=1}^\infty b_ie_l I_{D_{b_i}}(s_1)
  I_{D_{e_l}}(s_2)x(s_2)ds_2\big)ds_1,
  \end{eqnarray*}
  almost everywhere. We compute $A^n$, starting with $A^2$ and $A^3$. For almost every $t$,
  \begin{align*}
  &(A^2x)(t)=\int\limits_{\alpha}^{t} a(t)c(s)(Ax)(s)ds=\int\limits_{\alpha}^{t} a(t)c(s)\big(\int\limits_{\alpha}^{s}a(s)c(\tau)x(\tau)d\tau\big)ds\\
  &=\int\limits_{\alpha}^{t} \sum_{k=1}^\infty \sum_{j=1}^\infty a_kc_j I_{D_{a_k}}(t)
  I_{D_{c_j}}(s_1)\\ 
  &\hspace{2cm} \cdot \big(\int\limits_{\alpha}^{s_1}\sum_{k=1}^\infty \sum_{j=1}^\infty a_kc_j I_{D_{a_k}}(s_1)
  I_{D_{c_j}}(s_2)x(s_2)ds_2\big)ds_1, \\
  &(A^3x)(t)=\int\limits_{\alpha}^{t} a(t)c(s)(A^2x)(s)ds
  =\hspace{0mm}\int\limits_{\alpha}^{t} a(t)c(s_1)\\
  & \hspace{1cm} \cdot \big(\int\limits_{\alpha}^{s_1}a(s_1)c(s_2)\hspace{0mm}
  \hspace{0cm} \big(\int\limits_\alpha^{s_2}a(s_2)c(s_3)
  x(s_3)ds_3\big)
 ds_2\big)ds_1
 \\
 &=
  \int\limits_{\alpha}^{t} \sum_{k=1}^\infty \sum_{j=1}^\infty a_kc_j I_{D_{a_k}}(t)
  I_{D_{c_j}}(s_1)\\
  & \hspace{1cm} \cdot \bigg(\int\limits_{\alpha}^{s_1} \sum_{k=1}^\infty \sum_{j=1}^\infty a_kc_j I_{D_{a_k}}(s_1) I_{D_{c_j}}(s_2)
  \\
   & \hspace{2cm}
  \cdot \big(\int\limits_\alpha^{s_2} \sum_{k=1}^\infty \sum_{j=1}^\infty a_kc_j I_{D_{a_k}}(s_2)
  I_{D_{c_j}}(s_3)x(s_3)ds_3\big)ds_2\bigg)ds_1.
  \end{align*}
  Therefore, for $n\ge 1$ we have
  \begin{eqnarray*}
 && (A^n x)(t)=\int\limits_{\alpha}^{t} a(t)b(s_1)(A^{n-1}x)(s_1)ds_1=\int\limits_{\alpha}^{t} a(t)b(s_1) ds_1
   \\
   & &
 \hspace{3cm}  \cdot \left(
  \prod\limits_{i=1}^{n-2} \int\limits_{\alpha}^{s_i}a(s_i)b(s_{i+1})ds_{i+1}\right)
  \int\limits_{\alpha}^{s_{n-1}}a(s_{n-1})b(s_n)x(s_n)ds_n\\
  &&
  = \int\limits_{\alpha}^{t} \sum_{k=1}^\infty \sum_{j=1}^\infty a_kc_j I_{D_{a_k}}(t)  I_{D_{c_j}}(s_1)ds_1 \\
&&\hspace{2cm} \cdot\big(\prod\limits_{m=1}^{n-2} \int\limits_{\alpha}^{s_m} \sum_{k=1}^\infty \sum_{j=1}^\infty a_kc_j I_{D_{a_k}}(s_{m})
  I_{D_{c_j}}(s_{m+1})ds_{m+1}\big)\\
  & &
  \hspace{3cm}
  \cdot
  \int\limits_{\alpha}^{s_{n-1}} \sum_{k=1}^\infty \sum_{j=1}^\infty a_kc_j I_{D_{a_k}}(s_{n-1})
  I_{D_{c_j}}(s_n)x(s_n)ds_n.
  \end{eqnarray*}
  Hence,  for $n\ge 1$ and for almost every $t$,
  \begin{eqnarray*}\hspace{-0.1cm}
 && (BA^n x)(t)=\int\limits_{\alpha}^{t} c(t)e(s_1) ds_1\big(
  \prod\limits_{m=1}^{n-1} \int\limits_{\alpha}^{s_i}a(s_i)b(s_{m+1})ds_{m+1}\big)\\
 &&
 \hspace{0cm} \cdot\int\limits_{\alpha}^{s_{n}}a(s_{n})b(s_{n+1})
   x(s_{n+1})ds_{n+1}= \int\limits_{\alpha}^{t} \sum_{i=1}^\infty \sum_{l=1}^\infty b_ie_l I_{D_{b_i}}(s_1)
  I_{D_{e_l}}(s_1)ds_1\\
  &&
 \hspace{4cm} \cdot\big(
  \prod\limits_{m=1}^{n-1} \int\limits_{\alpha}^{s_m} \sum_{k=1}^\infty \sum_{j=1}^\infty a_kc_j I_{D_{a_k}}(s_{m})
   I_{D_{c_j}}(s_{m+1})ds_{m+1}\big)\\
   &&
\hspace{3cm}   \cdot
  \int\limits_{\alpha}^{s_{n}} \sum_{k=1}^\infty \sum_{j=1}^\infty a_kc_j I_{D_{a_k}}(s_{n})
  I_{D_{c_j}}(s_{n+1})x(s_{n+1})ds_{n+1}.
\end{eqnarray*}
If $(AB)x=BF(A)x$  for all $x\in L_p[\alpha,\beta]$, $1< p<\infty$, then
\begin{eqnarray*} %\nonumber
&& \hspace{-1mm} \int\limits_{\alpha}^{t} \sum_{k=1}^\infty \sum_{j=1}^\infty a_kc_j I_{D_{a_k}}(t)
  I_{D_{c_j}}(s_1)
  \big(\int\limits_{\alpha}^{s_1} \sum_{i=1}^\infty \sum_{l=1}^\infty b_ie_l I_{D_{b_i}}(s_1)
  I_{D_{e_l}}(s_1)x(s_2)ds_2\big)ds_1\\ %\nonumber
&&\hspace{-1mm}  =
 \delta \int\limits_{\alpha}^{t} \sum_{i=1}^\infty \sum_{l=1}^\infty b_ie_l I_{D_{b_i}}(t)
  I_{D_{e_l}}(s_1)ds_1\hspace{0mm}\\ %\nonumber
  &&
 \hspace{2cm} \cdot\big(
  \prod\limits_{m=1}^{n-1} \int\limits_{\alpha}^{s_m} \sum_{k=1}^\infty \sum_{j=1}^\infty a_kc_j I_{D_{a_k}}(s_{m})
  I_{D_{c_j}}(s_{m+1})ds_{m+1}\big)\hspace{-0.5mm} \\
 &&
 \hspace{3cm}
 \ \cdot
  \int\limits_{\alpha}^{s_{n}} \sum_{k=1}^\infty \sum_{j=1}^\infty a_kc_j I_{D_{a_k}}(s_{n})
  I_{D_{c_j}}(s_{n+1})x(s_{n+1})ds_{n+1}.  %\label{EqVolterraOpSplitKernelsNegativeProp}
\end{eqnarray*}

 For $x(\cdot)=1$ and almost every $ t\in  [\alpha,\beta]$,
\begin{eqnarray*}
 && (ABx)(t)=(B(F(A))x)(t)\, \Leftrightarrow
  \int\limits_{\alpha}^{t} \lim_{n_a,n_c\to\infty} \sum_{k=1}^{n_a} \sum_{j=1}^{n_c} a_kc_j I_{D_{a_k}}(t)
  I_{D_{c_j}}(s_1)
\\
&& \hspace{2cm}
\cdot\big(\int\limits_{\alpha}^{s_1} \lim_{n_b,n_e\to\infty}\sum_{i=1}^{n_b} \sum_{l=1}^{n_e} b_ie_l I_{D_{b_i}}(s_1)I_{D_{e_l}}(s_2)ds_2\big)ds_1
 \\
 &&
  =
\delta \int\limits_{\alpha}^{t} \lim_{n_b,n_e\to \infty}\sum_{i=1}^{n_b} \sum_{l=1}^{n_e} b_ie_l I_{D_{b_i}}(t)
 I_{D_{e_l}}(s_1)ds_1
 \\
 &&
\hspace{1cm}
\cdot \big(  \prod\limits_{m=1}^{n-1} \int\limits_{\alpha}^{s_m} \lim_{n_a,n_c\to\infty} \sum_{k=1}^{n_a} \sum_{j=1}^{n_c} a_kc_j I_{D_{a_k}}(s_{m})
  I_{D_{c_j}}(s_{m+1})ds_{m+1}\big) \\
&&  \hspace{2cm} \cdot
  \int\limits_{\alpha}^{s_{n}} \lim_{n_a,n_c\to\infty} \sum_{k=1}^{n_a} \sum_{j=1}^{n_c} a_kc_j I_{D_{a_k}}(s_{n})
  I_{D_{c_j}}(s_{n+1})ds_{n+1}. %\\
\end{eqnarray*}
Note that $ \bigcup\limits_{i,j,k,l=1}^{\infty}\{ D_{a_k} \cap D_{b_i} \cap D_{c_j}\cap D_{e_l} \}=\bigcup\limits_{m=1}^\infty P_m $ is a partition of $[\alpha,\beta]$. Therefore, for $x(\cdot)=1$, $t\in P_1=D_{a_1} \cap D_{b_1} \cap D_{c_1}\cap D_{e_1}$ and by applying the dominated convergence theorem we have
\begin{align*}
& \hspace{0mm}(ABx)(t)=\hspace{0mm}\int\limits_{\alpha}^{t}  a_1c_1 I_{D_{a_1}}(t)
  I_{D_{c_1}}(s_1) %\\
\hspace{0mm}  b_1e_1 (s_1I_{D_{e_1}}(s_1)-\gamma_{e_1})I_{D_{b_1}}(s_1) ds_1\\
&=
a_1c_1b_1e_1
\big(\frac{t^2}{2} I_{D_{b_1}\cap D_{e_1}\cap D_{a_1}\cap D_{c_1} }(t)
-\frac{\gamma_{c_1,b_1,e_1}^2}{2}I_{D_{e_1}\cap D_{c_1}\cap D_{b_1}}(\gamma_{c_1,b_1,e_1})I_{D_{a_1}}(t)
\\
&
- t\cdot \gamma_{e_1}I_{D_{b_1}\cap D_{e_1}\cap D_{a_1}\cap D_{c_1} }(t)
 +\gamma_{e_l}
\gamma_{c_1,b_1,e_1}I_{D_{e_1}\cap D_{c_1}\cap D_{b_1}}(\gamma_{c_1,b_1,e_1})I_{D_{a_1}}(t)\big)
\\
&
= a_1c_1b_1e_1
\big(\frac{t^2}{2} I_{D_{b_1}\cap D_{e_1}\cap D_{a_1}\cap D_{c_1} }(t)
-\frac{\gamma_{c_1,b_1,e_1}^2}{2}I_{D_{e_1}\cap D_{c_1}\cap D_{b_1}}(\gamma_{c_1,b_1,e_1})I_{D_{a_1}}(t)
\\
&
- t\cdot \gamma_{e_1}I_{D_{b_1}\cap D_{e_1}\cap D_{a_1}\cap D_{c_1} }(t)
  +\gamma_{e_1}
\gamma_{c_1,b_1,e_1}I_{D_{e_1}\cap D_{c_1}\cap D_{b_1}}(\gamma_{c_1,b_1,e_1})I_{D_{a_1}}(t)\big),
\end{align*}
where $\gamma_{e_l}=\inf D_{e_l}$, $\,\gamma_{c_j,b_i, e_l}=\left\{\begin{array}{cc}
 \inf \{D_{c_j}\cap D_{b_i}\cap D_{e_l}\},\ & \mbox{ if } \mu(D_{c_j}\cap D_{b_i}\cap D_{e_l})>0\\
 \beta +1,         &   \mbox{otherwise }.
\end{array}  \right.$

Similarly, for $t\in P_1$, $(Bx)(t)=b_1e_1 tI_{D_{b_1}\cap D_{e_1}}(t)-\gamma_{e_1}e_1b_1 I_{D_{b_1}}(t)$.
We now compute $(BA^n)x(t)$. By applying dominated convergence theorem, we have for $n=2$ and for
$x(\cdot)=1$  and $t\in D_{a_1} \cap D_{b_1} \cap D_{c_1}\cap D_{e_1}$,
\begin{eqnarray*}
&&  (BA^2x)(t)= b_1 I_{D_{b_1}}(t)\int\limits_{\alpha}^{t}
 e_1 I_{D_{e_1}}(s_1)ds_1\cdot
\big(   \int\limits_{\alpha}^{s_1}  a_{1}c_1
 I_{D_{a_{1}}}(s_{1})
  I_{D_{c_1}}(s_{2})ds_{2}\big)\\
 && \resizebox{0.97\hsize}{!}{$\displaystyle
 \cdot \int\limits_{\alpha}^{s_{2}} a_{1}c_1 I_{D_{a_{1}}}(s_{2})
  I_{D_{c_1}}(s_{3})ds_{3}
   =
  b_1 c^2_1a^2_1 e_1 I_{D_{b_1}}(t) \int\limits_{\alpha}^t I_{D_{e_1}}(s_1)I_{D_{a_1}}(s_1)\big(\frac{s^2_1}{2} I_{D_{a_1}\cap D_{c_1}}(s_1)
  $}
  \\
&&  - \frac{\gamma^2_{c_1,a_1}}{2}I_{D_{a_1}\cap D_{c_1}}(\gamma_{c_1,a_1})-\gamma_{c_1}s_1 I_{D_{a_1}\cap  D_{c_1}}(s_1)+\gamma_{c_1}\cdot \gamma_{c_1,a_1}I_{D_{a_1}\cap D_{c_1}}(\gamma_{c_1,a_1})\big)ds_1
\\
&&\  =
  b_1 c^2_1a^2_1 e_1 I_{D_{b_1}}(t)\big(\frac{t^3}{3!}I_{D_{a_1}\cap D_{e_1}\cap D_{c_1} }(t)
  -\frac{\gamma_{a_1,c_1,e_1}^3}{3!}I_{D_{a_1}\cap D_{e_1}\cap D_{c_1} }(\gamma_{a_1,c_1,e_1})
 \\
 && \hspace{1cm}
  -\frac{t^2}{2!}\gamma_{c_1} I_{D_{a_1}\cap D_{e_1} \cap D_{c_1}}(t)+\gamma_{c_1}\frac{\gamma^2_{a_1,c_1,e_1}}{2!}
    I_{D_{a_1}\cap D_{e_1}\cap  D_{c_1}}(\gamma_{a_1,c_1,e_1})
    \\
  &&\hspace{2cm}
  + t\gamma_{c_1}\gamma_{c_1,a_1}I_{D_{a_1}\cap D_{c_1}}(\gamma_{c_1,a_1})I_{D_{a_1}\cap D_{e_1}\cap D_{c_1}}(t)\\
  &&\hspace{3cm}
   -\gamma_{c_1} \gamma_{c_1,a_1}\gamma_{c_1,e_1}I_{D_{a_1}\cap D_{c_1}}(\gamma_{c_1,a_1})I_{D_{c_1}\cap D_{e_1}}(\gamma_{c_1,e_1}) \big)
 \end{eqnarray*}
 where $\gamma_{c_j}=\inf D_{c_j}$, $\,\gamma_{a_k,c_j}=\left\{\begin{array}{cc}
 \inf \{D_{a_k}\cap D_{c_j}\},\ & \mbox{ if } \mu(D_{a_k}\cap D_{c_j})>0\\
 \beta +1,         &   \mbox{otherwise },
\end{array}  \right.$ \\
$\,\gamma_{c_j,e_l}=\left\{\begin{array}{cc}
 \inf \{D_{c_j}\cap D_{e_l}\},\ & \mbox{ if } \mu(D_{c_j}\cap D_{e_l})>0\\
 \beta +1,         &   \mbox{otherwise },
\end{array}  \right.$\\
 $\gamma_{a_k,c_j, e_l}=\left\{\begin{array}{cc}
 \inf \{D_{a_k}\cap D_{c_j}\cap D_{e_l}\},\ & \mbox{ if } \mu(D_{a_k}\cap D_{c_j}\cap D_{e_l})>0\\
 \beta +1,         &   \mbox{otherwise }.
\end{array}  \right.$

Therefore, for $n\ge 2$, $x(\cdot)=1$  and $t\in P_1=D_{a_1} \cap D_{b_1} \cap D_{c_1}\cap D_{e_1}$ we get
\begin{multline*}
 %\resizebox{0.94\hsize}{!}{$\displaystyle
\hspace{-4mm} (BA^nx)(t) = b_1 I_{D_{b_1}}(t)\hspace{-1mm}\int\limits_{\alpha}^{t} \hspace{-2mm}
 e_1 I_{D_{e_1}}(s_1)ds_1\big( \prod\limits_{m=1}^{n-1}   \int\limits_{\alpha}^{s_m}  a_1c_1  I_{D_{a_{1}}}(s_{m})
  I_{D_{c_1}\cap D_{a_{1}}}(s_{m+1})ds_{m+1}\big) \\
 %  \resizebox{0.97\hsize}{!}{$\displaystyle
 \hspace{2cm} \cdot\int\limits_{\alpha}^{s_{n}}  a_1c_1 I_{D_{a_{1}}}(s_{n}) I_{D_{c_1}}(s_{n+1})ds_{n+1}
\\
  = b_1e_1a^n_1c^n_1\big(\frac{t^{n+1}}{(n+1)!}I_{D_{b_1}\cap D_{a_1}\cap D_{c_1}\cap D_{e_1}}(t) + I_{D_{b_1}}(t)\Theta_{i,j,k,l}(t)\big),
\end{multline*}
where $\Theta_{i,j,k,l}(t)$ is a polynomial of degree $n$. Thus,
\begin{align*}
   & BF(A)x(t)=\sum_{n=0}^{\deg(F)} b_1e_1a^n_1c^n_1\big(\frac{t^{n+1}}{(n+1)!}I_{D_{b_1}\cap D_{a_1}\cap D_{c_1}\cap D_{e_1}}(t) + I_{D_{b_1}}(t)\Theta_{i,j,k,l,n}(t)\big)
   \\
   &
   =\sum_{n=0}^{\deg(F)} \big(\frac{t^{n+1}}{(n+1)!}b_1e_1a^n_1c^n_1I_{D_{b_1}\cap D_{a_1}\cap D_{c_1}\cap D_{e_1}}(t) + b_1e_1a^n_1c^n_1I_{D_{b_1}}(t)\Theta_{i,j,k,l,n}(t)\big),
\end{align*}
for almost every $t\in P_1=D_{a_1} \cap D_{b_1} \cap D_{c_1}\cap D_{e_1}$, where $\Theta_{i,j,k,l,n}(t)$ is a polynomial of degree $n$.
Thus, if $\deg(F)\ge 2$ and $(AB)x(t)=(BF(A))x(t)$ is satisfied for almost every $t\in D_{a_1} \cap D_{b_1} \cap D_{c_1}\cap D_{e_1}$, then
$\delta_{\deg(F)} b_1e_1a^n_1c^n_1I_{D_{b_1}\cap D_{a_1}\cap D_{c_1}\cap D_{e_1}}(t)=0$. Since $\delta_{\deg(F)}\not=0$, this implies
$ b_1e_1a_1c_1I_{D_{b_1}\cap D_{a_1}\cap D_{c_1}\cap D_{e_1}}(t)=0$.

%%%%%%%%%% second step
%%%%%%%%%%%%%%%  AB t in P_m
Suppose that $m\ge 2$, $\sum\limits_{i,j,k,l=1}^{\infty} a_kb_ic_je_l I_{D_{a_k}\cap D_{b_i}\cap D_{c_j}\cap D_{e_l}}(t)=0$, for almost all $t\in P_r$, for all $r=1,\ldots, m$.
  By applying the dominated convergence theorem  and setting $t_m=\sup P_m$, we have for almost every $t\in P_{m+1}$,
\begin{align*}
& \hspace{0mm}(ABx)(t)=\hspace{0mm}
\sum_{k,i,j,l=1}^{\infty} \varphi(\gamma_{c_j,b_i,e_l},\gamma_{e_l})I_{\{\gamma_{c_j,b_i,e_l}\leq t_m\}}(t)I_{\{\gamma_{e_l}\leq t_m\}}(t)I_{D_{a_k}}(t)\\
&
\hspace{2cm}
+ \lim_{n_a,n_c\to\infty} \sum_{k=1}^{n_a} \sum_{j=1}^{n_c}\int\limits_{t_{m}}^{t}  a_kc_j I_{D_{a_k}}(t)
  I_{D_{c_j}}(s_1) \\
&
\hspace{3cm} \cdot\big( \lim_{n_b,n_e\to\infty}\sum_{i=1}^{n_b} \sum_{l=1}^{n_e} b_ie_l (s_1I_{D_{e_l}}(s_1)-\gamma_{e_l})
 I_{D_{b_i}}(s_1) ds_1\big)\\
 &
=\sum_{k,i,j,l=1}^{\infty} \varphi(\gamma_{c_j,b_i,e_l},\gamma_{e_l})I_{\{\gamma_{c_j,b_i,e_l}\leq t_m\}}(t)I_{\{\gamma_{e_l}\leq t_m\}}(t)I_{D_{a_k}}(t)\\
&
+\lim_{n_a,n_b,n_c, n_e\to\infty} \sum_{k=1}^{n_a} \sum_{j=1}^{n_c}\sum_{i=1}^{n_b} \sum_{l=1}^{n_e}
a_kc_jb_ie_l
\big(\frac{t^2}{2} I_{D_{b_i}\cap D_{e_l}\cap D_{a_k}\cap D_{c_j} }(t)
\\
&
-\frac{\gamma_{c_j,b_i,e_l}^2}{2}I_{D_{e_l}\cap D_{c_j}\cap D_{b_i}}(\gamma_{c_j,b_i,e_l})I_{D_{a_k}}(t)
- t\cdot \gamma_{e_l}I_{D_{b_i}\cap D_{e_l}\cap D_{a_k}\cap D_{c_j} }(t)\\
&  +\gamma_{e_l}
\gamma_{c_j,b_i,e_l}I_{D_{e_l}\cap D_{c_j}\cap D_{b_i}}(\gamma_{c_j,b_i,e_l})I_{D_{a_k}}(t)\big)
\\
&
=
\sum_{k,i,j,l=1}^{\infty} \varphi(\gamma_{c_j,b_i,e_l},\gamma_{e_l})I_{\{\gamma_{c_j,b_i,e_l}\leq t_m\}}(t)I_{\{\gamma_{e_l}\leq t_m\}}(t)I_{D_{a_k}}(t)
\\
&
+
\sum_{k,i,j,l=1}^{\infty} a_kc_jb_ie_l
\big(\frac{t^2}{2} I_{D_{b_i}\cap D_{e_l}\cap D_{a_k}\cap D_{c_j} }(t)
-\frac{\gamma_{c_j,b_i,e_l}^2}{2}I_{D_{e_l}\cap D_{c_j}\cap D_{b_i}}(\gamma_{c_j,b_i,e_l})
\\
&
\cdot I_{D_{a_k}}(t)
- t\cdot \gamma_{e_l}I_{D_{b_i}\cap D_{e_l}\cap D_{a_k}\cap D_{c_j} }(t)
  +\gamma_{e_l}
\gamma_{c_j,b_i,e_l}I_{D_{e_l}\cap D_{c_j}\cap D_{b_i}}(\gamma_{c_j,b_i,e_l})I_{D_{a_k}}(t)\big),
\end{align*}
where $ \varphi(\gamma_{c_j,b_i,e_l},\gamma_{e_l})$ is a real valued function.

%%%%%%%%%%%%%%%%%%%%% BA^2 para t in other sets
Similarly for $t\in P_{m+1}$, $m\ge 2$ we have %$(Bx)(t)=b_1e_1 tI_{D_{b_1}\cap D_{e_1}}(t)-\gamma_{e_1}e_1b_1 I_{D_{b_1}}(t) $.
\begin{align*}
& \resizebox{0.98\hsize}{!}{$\displaystyle
(Bx)(t)=\sum_{i,l=1}^{\infty} %[b_ie_l tI_{D_{b_i}\cap D_{e_l}}(t)I_{\{t_m\leq t\}}(t)+
\xi(\gamma_{e_l})I_{\{\gamma_{e_l}\leq t_m\}}(t)I_{D_{b_i}}(t)%]
+\lim_{n_b,n_e\to \infty}\sum_{i=1}^{n_b} \sum_{l=1}^{n_e} b_i I_{D_{b_i}}(t)\int\limits_{t_{m}}^{t}
 e_l I_{D_{e_l}}(s_1)ds_1 $}
 \\
 & \resizebox{0.98\hsize}{!}{$\displaystyle
 =\sum_{i,l=1}^{\infty} %I_{\{t_m\leq t\}}(t)b_ie_l tI_{D_{b_i}\cap D_{e_l}}(t)
  %\sum_{i,l=1}^{\infty}
  \xi(\gamma_{e_l})I_{\{\gamma_{e_l}\leq t_m\}}(t)I_{D_{b_i}}(t)
 + \lim_{n_b,n_e\to \infty}\sum_{i=1}^{n_b} \sum_{l=1}^{n_e}  b_ie_l I_{D_{b_i}}(t)(t I_{D_{e_l}}(t)-\gamma_{e_l}I_{D_{e_l}}(\gamma_{e_l}))
 $}
 \\
 &
  =\sum_{i,l=1}^{\infty} %I_{\{t_m\leq t\}}(t)b_ie_l tI_{D_{b_i}\cap D_{e_l}}(t)
  %\sum_{i,l=1}^{\infty}
  \xi(\gamma_{e_l})I_{\{\gamma_{e_l}\leq t_m\}}(t)I_{D_{b_i}}(t)
 + \sum_{i,l=1}^{\infty} b_ie_l I_{D_{b_i}}(t)(t I_{D_{e_l}}(t)-\gamma_{e_l}I_{D_{e_l}}(\gamma_{e_l}))
\end{align*}
where $\xi(\cdot)$ is a real valued function.

For $t\in P_{m+1}$, $m\ge 2$, we have
\begin{align*}
& \resizebox{1\hsize}{!}{$\displaystyle  (BA^2x)(t)=\sum_{k,i,j,l=1}^{\infty} \psi(\gamma_{a_k,c_j,e_l},\gamma_{a_k,c_j},\gamma_{c_j})I_{\{\gamma_{a_k,c_j,e_l}\leq t_m\}}(t)I_{\{\gamma_{a_k,c_j}\leq t_m\}}(t)I_{\{\gamma_{c_j}\leq t_m\}}(t)I_{D_{b_i}}(t)
$}
\\
&
\hspace{2cm}
+\lim_{n_b,n_e\to \infty}\sum_{i=1}^{n_b} \sum_{l=1}^{n_e} b_i I_{D_{b_i}}(t)\int\limits_{t_{m}}^{t}
 e_l I_{D_{e_l}}(s_1)ds_1\\
 &
\hspace{3cm}
 \cdot
\big(   \int\limits_{\alpha}^{s_1}\lim_{n_a\to\infty} \lim_{n_c\to\infty} \sum_{k=1}^{n_a}\sum_{j=1}^{n_c} a_{k}c_j
 I_{D_{a_{k}}}(s_{1})
  I_{D_{c_j}}(s_{2})ds_{2}\big)
  \\
  &
 \hspace{4cm}
  \cdot
  \int\limits_{\alpha}^{s_{2}} \lim_{n_a\to\infty}\lim_{n_c\to\infty} \sum_{k=1}^{n_a}\sum_{j=1}^{n_c} a_{k}c_j I_{D_{a_{k}}}(s_{2})
  I_{D_{c_j}}(s_{3})ds_{3}
  \\
& =
\sum_{k,i,j,l=1}^{\infty} \psi(\gamma_{a_k,c_j,e_l},\gamma_{a_k,c_j},\gamma_{c_j})I_{\{\gamma_{a_k,c_j,e_l}\leq t_m\}}(t)I_{\{\gamma_{a_k,c_j}\leq t_m\}}(t)I_{\{\gamma_{c_j}\leq t_m\}}(t)I_{D_{b_i}}(t)
\\
&
+\lim_{n_b,n_a,n_c,n_e\to \infty} \sum_{k=1}^{n_a}\sum_{j=1}^{n_c}\sum_{i=1}^{n_b}\sum_{l=1}^{n_e}
  b_i c^2_ja^2_k e_l I_{D_{b_i}}(t) \int\limits_{\alpha}^t I_{D_{e_l}}(s_1)I_{D_{a_k}}(s_1)\big(\frac{s^2_1}{2} I_{D_{a_k}\cap D_{c_j}}(s_1)\\
&  -\frac{\gamma^2_{c_j,a_k}}{2}I_{D_{a_k}\cap D_{c_j}}(\gamma_{c_j,a_k})-\gamma_{c_j}s_1 I_{D_{a_k}\cap  D_{c_j}}(s_1)+\gamma_{c_j}\cdot \gamma_{c_j,a_k}I_{D_{a_k}\cap D_{c_j}}(\gamma_{c_j,a_k})\big)ds_1
\\
&
=\sum_{k,i,j,l=1}^{\infty} \psi(\gamma_{a_k,c_j,e_l},\gamma_{a_k,c_j},\gamma_{c_j})I_{\{\gamma_{a_k,c_j,e_l}\leq t_m\}}(t)I_{\{\gamma_{a_k,c_j}\leq t_m\}}(t)I_{\{\gamma_{c_j}\leq t_m\}}(t)I_{D_{b_i}}(t)
\\
& \hspace{0.5cm}+\lim_{n_b,n_a,n_c,n_e\to \infty} \sum_{k=1}^{n_a}\sum_{j=1}^{n_c}\sum_{i=1}^{n_b}\sum_{l=1}^{n_e}
  b_i c^2_ja^2_k e_l I_{D_{b_i}}(t)\big(\frac{t^3}{3!}I_{D_{a_k}\cap D_{e_l}\cap D_{c_j} }(t)\\
  &
  \hspace{1.0cm}
  -\frac{\gamma_{a_k,c_j,e_l}^3}{3!}I_{D_{a_k}\cap D_{e_l}\cap D_{c_j} }(\gamma_{a_k,c_j,e_l})
  -\frac{t^2}{2!}\gamma_{c_j} I_{D_{a_k}\cap D_{e_l} \cap D_{c_j}}(t)+\gamma_{c_j}\frac{\gamma^2_{a_k,c_j,e_l}}{2!}
  \\
   &
   \hspace{1.5cm}
   \cdot I_{D_{a_k}\cap D_{e_l}\cap  D_{c_j}}(\gamma_{a_k,c_j,e_l})+t\gamma_{c_j}\gamma_{c_j,a_k}I_{D_{a_k}\cap D_{c_j}}(\gamma_{c_j,a_k})I_{D_{a_k}\cap D_{e_l}\cap D_{c_j}}(t)\\
  &
 \hspace{2.5cm} -\gamma_{c_j} \gamma_{c_j,a_k}\gamma_{c_j,e_l}I_{D_{a_k}\cap D_{c_j}}(\gamma_{c_j,a_k}I_{D_{c_j}\cap D_{e_l}}(\gamma_{c_j,e_l}) \big),
 \end{align*}
 where $ \psi(\gamma_{a_k,c_j,e_l},\gamma_{a_k,c_j} \gamma_{c_j})$ is a real valued function.
%%%%%%%%%%%%%%%%%%%
For $n\ge 2$, $t\in P_{m+1}$,
\begin{align*}
&%\resizebox{0.99\hsize}{!}{$\displaystyle
(BA^nx)(t)
\\
& =
\sum_{k,i,j,l=1}^{\infty} \psi(\gamma_{a_k,c_j,e_l},\gamma_{a_k,c_j},\gamma_{c_j})I_{\{\gamma_{a_k,c_j,e_l}\leq t_m\}}(t)I_{\{\gamma_{a_k,c_j}\leq t_m\}}(t)I_{\{\gamma_{c_j}\leq t_m\}}(t)
%$}
 I_{D_{b_i}}(t)\\
 &
\hspace{1cm} +\lim_{n_b,n_e\to \infty}\sum_{i=1}^{n_b} \sum_{l=1}^{n_e} b_i I_{D_{b_i}}(t)\hspace{0mm}\int\limits_{t_{m}}^{t}
 e_l I_{D_{e_l}}(s_1)ds_1\\
 &
\hspace{2cm} \cdot \big( \prod\limits_{m=1}^{n-1} \lim_{n_a,n_c\to\infty}\sum_{k=1}^{n_a} \sum_{j=1}^{n_c}  \int\limits_{\alpha}^{s_m}  a_kc_j I_{D_{a_{k}}}(s_{m}) I_{D_{c_j}\cap D_{a_{k}}}(s_{m+1})ds_{m+1}\big)
 \\
 &
\hspace{3cm} \cdot \lim_{n_a, n_c\to\infty}
  \int\limits_{\alpha}^{s_{n}} \sum_{k=1}^{n_a} \sum_{j=1}^{n_c} a_kc_j I_{D_{a_{k}}}(s_{n}) I_{D_{c_j}}(s_{n+1})ds_{n+1}
  \\
  & =
  \sum_{k,i,j,l=1}^{\infty} \psi(\gamma_{a_k,c_j,e_l},\gamma_{a_k,c_j},\gamma_{c_j})I_{\{\gamma_{a_k,c_j,e_l}\leq t_m\}}(t)I_{\{\gamma_{a_k,c_j}\leq t_m\}}(t)I_{\{\gamma_{c_j}\leq t_m\}}(t)I_{D_{b_i}}(t)
  \\
  &
  \hspace{1cm} +
  \sum_{i,j,k,l=1}^{\infty} b_ie_la^n_kc^n_j\big(\frac{t^{n+1}}{(n+1)!}I_{D_{b_i}\cap D_{a_k}\cap D_{c_j}\cap D_{e_l}}(t) + I_{D_{b_i}}(t)\Theta_{i,j,k,l}(t)\big),
\end{align*}
where $\Theta_{i,j,k,l}(t)$ is a polynomial of degree $n$. Therefore,
\begin{align*}
&  \resizebox{0.89\hsize}{!}{$\displaystyle (BF(A)x)(t) =\hspace{0mm}
  \sum_{n=0}^{\deg(F)}\hspace{0mm}\sum_{k,i,j,l=1}^{\infty} \psi(\gamma_{a_k,c_j,e_l},\gamma_{a_k,c_j},\gamma_{c_j},\gamma_{e_l},n)I_{\{\gamma_{a_k,c_j,e_l}\leq t_m\}}(t)
  $}
  \\
  & %\resizebox{0.92\hsize}{!}{$\displaystyle
 \hspace{4cm} \cdot I_{\{\gamma_{a_k,c_j}\leq t_m\}}(t)
   I_{\{\gamma_{c_j}\leq t_m\}}(t)I_{\{\gamma_{e_l}\leq t_m\}}(t)I_{D_{b_i}}(t)
  \\
  &
  +
  \sum_{n=0}^{\deg(F)}\sum_{i,j,k,l=1}^{\infty}\delta_n b_ie_la^n_kc^n_j\big(\frac{t^{n+1}}{(n+1)!}
  I_{D_{b_i}\cap D_{a_k}\cap D_{c_j}\cap D_{e_l}}(t)
   + I_{D_{b_i}}(t)\Theta_{i,j,k,l,n}(t)\big)
   \\
   &=
  \sum_{n=0}^{\deg(F)}\hspace{-1mm}\sum_{k,i,j,l=1}^{\infty} \psi(\gamma_{a_k,c_j,e_l},\gamma_{a_k,c_j},\gamma_{c_j},\gamma_{e_l},n)
  % $}
  \\
  &
\hspace{2cm} \cdot I_{\{\gamma_{a_k,c_j,e_l}\leq t_m\}}(t)I_{\{\gamma_{a_k,c_j}\leq t_m\}}(t)
  I_{\{\gamma_{c_j}\leq t_m\}}(t)I_{\{\gamma_{e_l}\leq t_m\}}(t)I_{D_{b_i}}(t)
  \\
  & \resizebox{0.97\hsize}{!}{$\displaystyle
 + \sum_{n=0}^{\deg(F)}\big(\frac{t^{n+1}}{(n+1)!}\sum_{i,j,k,l=1}^{\infty}\delta_n b_ie_la^n_kc^n_jI_{D_{b_i}\cap D_{a_k}\cap D_{c_j}\cap D_{e_l}}(t)
   + \sum_{i,j,k,l=1}^{\infty}\delta_n b_ie_la^n_kc^n_jI_{D_{b_i}}(t)\Theta_{i,j,k,l,n}(t)\big),
  $}
\end{align*}
where $\Theta_{i,j,k,l,n}(t)$ is a polynomial of degree $n$, $\psi(\gamma_{a_k,c_j,e_l},\gamma_{a_k,c_j},\gamma_{c_j}, \gamma_{e_l},n)$  is a real valued sequence of functions.

Therefore, if $\deg(F)\ge 2$ and $(AB)x(t)=(BF(A))x(t)$ for almost every $t\in P_{m+1}$, then
$\delta_{\nu}\cdot\sum\limits_{i,j,k,l=1}^{\infty} b_ie_la^\nu_kc^\nu_jI_{D_{b_i}\cap D_{a_k}\cap D_{c_j}\cap D_{e_l}}(t)=0$, $t\in P_{m+1}$, where $\nu=\deg(F)$. Since $\delta_{\nu}\not=0$ this implies
$
\sum\limits_{i,j,k,l=1}^{\infty} b_ie_la^\nu_kc^\nu_jI_{D_{b_i}\cap D_{a_k}\cap D_{c_j}\cap D_{e_l}}(t)=0
$, $t\in P_{m+1}$, which is equivalent to
\begin{align*}
&\sum\limits_{i,j,k,l=1}^{\infty} b_ie_la_kc_jI_{D_{b_i}\cap D_{a_k}\cap D_{c_j}\cap D_{e_l}}(t)=
\sum\limits_{k=1}^{\infty}a_k I_{D_{a_k}}(t) \sum\limits_{i=1}^{\infty} b_i I_{D_{b_i}}(t) \sum\limits_{j=1}^{\infty} c_j I_{D_{c_j}}(t)
\\
&
\hspace{2cm}
\cdot \sum\limits_{l=1}^{\infty} e_l I_{D_{e_l}}(t)=a(t)b(t)c(t)e(t)=0,
\end{align*}
 for almost every $t\in P_{m+1}$. Since $m$ is any positive integer, we conclude that  $(AB)x(t)=(BF(A))x(t)$, $t\in[\alpha,\beta]$ implies that the set supp$\{abce\}$ has measure zero.
\qed
\end{proof}

\begin{corollary}
Let $1\leq p\leq\infty$, and the linear operators $A:\,L_p(\mathbb{[\alpha,\beta]})\to L_p(\mathbb{[\alpha,\beta]})$ and  $B:\,L_p(\mathbb{[\alpha,\beta]})\to L_p(\mathbb{[\alpha,\beta]})$ be defined, for almost every $t$, by
\begin{gather*}% \label{LinearVolterraOperatorSplittedKernels}
  (Ax)(t)= \int\limits_{\alpha}^{t} \gamma_A x(s)ds,\quad (Bx)(t)= \int\limits_{\alpha}^{t} \gamma_B x(s)ds,
\end{gather*}
where $\alpha,\gamma_A,\gamma_B$ are  real numbers. Let $F(z)=\sum\limits_{i=0}^{n}\delta_i z^i$, $\delta_i\in\mathbb{R}$, $i=0,\ldots, n$,  $n\ge 2$.

If $AB=BF(A)$, then either $A=0$ or $B=0$. In particular, if $A=0$ and $F(0)\not=0$, then $B=0$. Moreover, if $F(0)=0$, that is, $F(z)=\sum\limits_{i=1}^{n}\delta_i z^i$, then $AB=BF(A)$ if and only if either $A=0$ or $B=0$.
\end{corollary}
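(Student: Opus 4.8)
The plan is to identify $A$ and $B$ with the Volterra integral operators appearing in Proposition~\ref{propOpIntVoltTypeSimple}, taking all four simple functions to be constants. Set $a(t)=\gamma_A$, $c(t)=1$, $b(t)=\gamma_B$, $e(t)=1$ for $t\in[\alpha,\beta]$, each associated with the trivial partition $\{[\alpha,\beta]\}$; these are bounded measurable simple functions, hence $a,b\in L_p[\alpha,\beta]$ and $c,e\in L_q[\alpha,\beta]$ for any conjugate pair $p,q$, and one has $(Ax)(t)=\int_{\alpha}^{t} a(t)c(s)x(s)\,ds$ and $(Bx)(t)=\int_{\alpha}^{t} b(t)e(s)x(s)\,ds$, exactly the required form. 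Since $\deg(F)=n\ge 2$, Proposition~\ref{propOpIntVoltTypeSimple} applies and tells us that $AB=BF(A)$ forces $\operatorname{supp}\{abce\}$ to have measure zero. Because $a(t)b(t)c(t)e(t)=\gamma_A\gamma_B$ is constant on $[\alpha,\beta]$ and $\mu([\alpha,\beta])=\beta-\alpha>0$, this is possible only when $\gamma_A\gamma_B=0$.

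Next I would record the elementary equivalences $A=0\iff\gamma_A=0$ and $B=0\iff\gamma_B=0$: one direction is immediate, and for the other it suffices to evaluate on the constant function $x\equiv 1$, which gives $(Ax)(t)=\gamma_A(t-\alpha)$ and $(Bx)(t)=\gamma_B(t-\alpha)$, nonzero elements of $L_p[\alpha,\beta]$ as soon as the respective constant is nonzero. Combined with $\gamma_A\gamma_B=0$, this yields the first assertion: $AB=BF(A)$ implies $A=0$ or $B=0$.

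For the remaining clauses I would argue directly from the relation. If $A=0$, then $F(A)$ is the scalar operator $\delta_0\,\mathrm{Id}$, so $AB=BF(A)$ reads $0=\delta_0 B$; when $F(0)=\delta_0\neq 0$ this forces $B=0$, which is the ``in particular'' statement. Finally, assume $F(0)=0$, i.e.\ $\delta_0=0$. The implication $AB=BF(A)\Rightarrow A=0$ or $B=0$ is the first assertion; conversely, if $B=0$ then $AB=0=BF(A)$ trivially, and if $A=0$ then $AB=0$ and $BF(A)=\delta_0 B=0$, so $AB=BF(A)$ again holds. This gives the claimed equivalence.

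I do not anticipate a genuine obstacle: the statement is essentially a corollary of Proposition~\ref{propOpIntVoltTypeSimple}, and the only points needing a word of care are verifying that constant functions are admissible simple functions in that proposition (so it is applicable for every $1\le p\le\infty$) and keeping track of the constant term $\delta_0$ of $F$ in the converse direction of the last clause.
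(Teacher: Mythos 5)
Your proposal is correct and follows essentially the same route as the paper: apply Proposition~\ref{propOpIntVoltTypeSimple} with constant simple functions to conclude $\gamma_A\gamma_B=0$, hence $A=0$ or $B=0$, and then handle the $F(0)\neq 0$ case and the converse of the $F(0)=0$ case by direct evaluation of $BF(A)$. Your write-up is in fact slightly more careful than the paper's (explicitly checking the admissibility of the constant kernels and the equivalences $A=0\iff\gamma_A=0$, $B=0\iff\gamma_B=0$), but the argument is the same.
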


\begin{proof}
 If $AB= BF(A)$, $n\ge 2$ then by applying Proposition \ref{propOpIntVoltTypeSimple} we get $\gamma_A\gamma_B=0$. This implies that either $\gamma_A=0$ or $\gamma_B=0$. From which implies $A=0$ or $B=0$. If $A=0$ then
 commutation relation $AB=BF(A)$ implies $BF(0)=0$. This implies that $B=0$ if $F(0)\not=0$.

\noindent In case $F(0)=0$, that is, $F(t)=\sum\limits_{i=1}^{n}\delta_i t^i$ then, if  $AB=BF(A)$ then by applying Proposition \ref{propOpIntVoltTypeSimple} we get $\gamma_A\gamma_B=0$. Which implies that either $A=0$ or $B=0$.
Conversely if either $A=0$ or $B=0$ the commutation relation is satisfied.
  \qed
\end{proof}

\begin{example}
Let $1<p<\infty$ and the linear operators
$A:L_p[0,1]\to L_p[0,1]$ and $B:L_p[0,1]\to L_p[0,1]$,  be defined, for almost every $t$, by
\begin{equation*}
   (Ax)(t)=\int\limits_{0}^{t} a(t)x(s)ds, \quad  (Bx)(t)=\int\limits_{0}^{t} b(t)x(s)ds,
\end{equation*}
where $a(t)=I_{[0,1/2]}(t)$, $b(t)=I_{[1/2,1]}(t)$, $I_\Lambda$ is the indicator function of the set $\Lambda$, $c(\cdot)=e(\cdot)=1$ in the notation of Proposition \ref{propOpIntVoltTypeSimple}.
By computing $AB$ we get
\begin{equation*}
  (AB)x(t)=\int\limits_0^t a(t)\big(\int\limits_{0}^{s}b(s)x(\tau)d\tau\big) ds=a(t)\int\limits_0^t b(s)\big(\int\limits_{0}^{s}x(\tau)d\tau\big) ds=0,
\end{equation*}
for almost every $t$, because if $0\le t<1/2$ then $b(s)=0$ for almost every $s\in [0,1/2]$ and if
$1/2\le t<1$ then $a(t)=0$ for almost every $t\in [1/2,1]$. We compute
$A^2$ and $BA^2$. For almost every $0<t<1/2$,
\begin{equation*}
   (A^2x)(t)  =\int\limits_{0}^{t} a(t)\big(\int\limits_{0}^{s} a(s)x(\tau)d\tau\big) ds=\int\limits_{0}^{t} \big(\int\limits_{0}^{s}x(\tau)d\tau\big) ds
\end{equation*}
and  for almost every $1/2\le t\le 1$,
\begin{align*}
  %\resizebox{0.95\hsize}{!}{$\displaystyle
  & (A^2x)(t)  =\int\limits_{0}^{1/2} a(t)\big(\int\limits_{0}^{s} a(s)x(\tau)d\tau\big) ds+\int\limits_{1/2}^{t} a(t)\big(\int\limits_{0}^{s}a(s)x(\tau)d\tau\big) ds
 \\
 &
 = \int\limits_{0}^{1/2} a(t)\big(\int\limits_{0}^{s} a(s)x(\tau)d\tau\big) ds=0, %$}
\end{align*}
because $a(t)=0$ for $1/2 \leq t \leq 1$. Therefore,  $(BA^2x)(t)=0$.
%for almost every $t\in [0,1]$ we get $(BA^2)x(t)=0$ since $b(t)=0$ almost everywhere in this interval and for almost every $t\in [1/2,1]$ we have
%\begin{eqnarray*}
%  (BA^2x)(t)&=&\int\limits_0^t b(t)\left(\int\limits_{0}^{1/2} \int\limits_{0}^{s_1} a(s_1)x(s_2)ds_2 ds_1\right) d\tau =\int\limits_0^t \left(\int\limits_{0}^{1/2} \int\limits_{0}^{s_1} a(s_1)x(s_2)ds_2 ds_1\right) d\tau\\
%  &=&
%  a(t)\int\limits_{0}^{1/2} \int\limits_{0}^{s_1} a(s_1)x(s_2)ds_2 ds_1 =0,
%\end{eqnarray*}
%because $a$
So, we have  $AB=\delta BA^2=0$, and $a(t)b(t)c(t)e(t)=0$ for almost every $t$.
\end{example}

\begin{remark}
Note that the necessary condition for commutation relation $AB=\delta BA^n$, $n\ge 2$ in Proposition \ref{propOpIntVoltTypeSimple} is not sufficient. In fact, let $1<p<\infty$, and let the linear operators
$A:L_p[0,1]\to L_p[0,1]$ and $B:L_p[0,1]\to L_p[0,1]$ be defined, for almost every $t$, by
$
   (Ax)(t)=\int\limits_{0}^{t} a(t)x(s)ds,\  (Bx)(t)=\int\limits_{0}^{t} b(t)x(s)ds,
$
and where $a(t)=I_{[0,1/4]}(t)-I_{[3/4,1]}(t)$, $b(t)=I_{[1/4,3/4]}(t)$, $I_\Lambda$ is the indicator function of the set $\Lambda$, and $c(\cdot)=e(\cdot)=1$ in the notation of Proposition \ref{propOpIntVoltTypeSimple}.
Let us compute $AB$. If $0\leq t \leq 1/4$, then for almost every $t$,
\begin{equation*}
\resizebox{0.98\hsize}{!}{$\displaystyle  (AB)x(t)=\int\limits_0^t a(t)\big(\int\limits_{0}^{s}b(s)x(\tau)d\tau\big) ds=a(t)\int\limits_0^t b(s)\big(\int\limits_{0}^{s}x(\tau)d\tau\big) ds=0, $}
\end{equation*}
because if $0 \leq t <1/4$, then $b(s)=0$ for almost every $s\in [0,1/4]$. If  $1/4\leq t \leq 3/4$, then
\begin{eqnarray*}
 (AB)x(t)&=&\int\limits_0^{1/4} a(t)\big(\int\limits_{0}^{s}b(s)x(\tau)d\tau\big) ds+
  \int\limits_{1/4}^{t} a(t)\big(\int\limits_{0}^{s}b(s)x(\tau)d\tau\big) ds\\
 &=&  a(t)\int\limits_{1/4}^t b(s)\big(\int\limits_{0}^{s}x(\tau)d\tau\big) ds=0,
\end{eqnarray*}
for almost every $t$. If  $3/4\leq t \leq 1$, then
\begin{eqnarray*}
 && (AB)x(t)=\int\limits_0^{1/4} a(t)\big(\int\limits_{0}^{s}b(s)x(\tau)d\tau\big) ds
 + \int\limits_{1/4}^{3/4} a(t)\big(\int\limits_{0}^{s}b(s)x(\tau)d\tau\big) ds\\
 && + \int\limits_{3/4}^{t} a(t)\big(\int\limits_{0}^{s}b(s)x(\tau)d\tau\big) ds=
  -I_{[3/4,1]}(t)\int\limits_{1/4}^{3/4} \big(\int\limits_{0}^{s}x(\tau)d\tau\big) ds
\end{eqnarray*}
for almost every $t$. We compute
$A^2$ and $BA^2$. For almost every $0<t<1/4$,
\begin{equation*}
   (A^2x)(t)  =\int\limits_{0}^{t} a(t)\big(\int\limits_{0}^{s} a(s)x(\tau)d\tau\big) ds=\int\limits_{0}^{t} \big(\int\limits_{0}^{s}x(\tau)d\tau\big) ds.
\end{equation*}
For almost every $1/4\le t\le 3/4$,
\begin{eqnarray*}
   (A^2x)(t) & =&\int\limits_{0}^{1/4} a(t)\big(\int\limits_{0}^{s} a(s)x(\tau)d\tau\big) ds+\int\limits_{1/4}^{t} a(t)\big(\int\limits_{0}^{s}a(s)x(\tau)d\tau\big) ds\\
 &=&I_{[0,1/4]}(t)\int\limits_{0}^{1/4} \big(\int\limits_{0}^{s} x(\tau)d\tau\big) ds=0.
\end{eqnarray*}
For almost every $3/4\le t\le 1$,
\begin{eqnarray*}
   (A^2x)(t) & =&\int\limits_{0}^{1/4} a(t)\big(\int\limits_{0}^{s} a(s)x(\tau)d\tau\big) ds+\int\limits_{3/4}^{t} a(t)\big(\int\limits_{0}^{s}a(s)x(\tau)d\tau\big) ds\\
  &=&-I_{[3/4;1]}(t)\big(\int\limits_{0}^{1/4} \big(\int\limits_{0}^{s} x(\tau)d\tau\big) ds-\int\limits_{3/4}^{t} \big(\int\limits_{0}^{s} x(\tau)d\tau\big) ds\big).
\end{eqnarray*}
Therefore, for almost every $t\in [0,1/4]$, we get $(BA^2)x(t)=0$, since $b(t)=0$ for almost every $t$ in this interval. For almost every $t\in [1/4;3/4]$,
\begin{align*}
 & \hspace{0mm} (BA^2x)(t)=\int\limits_0^{1/4} b(t)\big(\int\limits_{0}^{\tau} \big(\int\limits_{0}^{s_1} a(s_1)x(s_2)ds_2\big) ds_1\big) d\tau \\
 & %\resizebox{1\hsize}{!}{$\displaystyle
\hspace{4cm}  + \int\limits_{1/4}^{t} b(t)\hspace{0mm}\big(\hspace{0mm}\int\limits_{0}^{\tau} \big(\int\limits_{0}^{s_1} a(s_1)x(s_2)ds_2 \big) ds_1 \hspace{0mm}\big) d\tau\\
 &\resizebox{1\hsize}{!}{$\displaystyle
  =I_{[1/4;3/4]}(t)\int\limits_0^{1/4}\hspace{0mm} \big(\hspace{0mm}\int\limits_{0}^{\tau} \big( \int\limits_{0}^{s_1} a(s_1)x(s_2)ds_2\big) ds_1 \hspace{-1mm} +\int\limits_{1/4}^{t} b(t)\hspace{0mm}\big(\hspace{0mm}\int\limits_{0}^{\tau} \big(\int\limits_{0}^{s_1} a(s_1)x(s_2)ds_2 \big) ds_1 \hspace{0mm}\big) d\tau\big) d\tau. $}
\end{align*}
For almost every $t\in [3/4,1]$, we get $(BA^2)x(t)=0$, since $b(t)=0$ for almost every $t$ in this interval.
So, we have $a(t)b(t)c(t)e(t)=0$ for almost every $t$, but $AB\not=BA^2$.
\end{remark}

\begin{proposition}\label{PropSufficientCondVolteraOpMonomialComutRelation}
Let  $1\leq p\leq\infty$, $1\leq q\leq\infty$ be such that $\frac{1}{p}+\frac{1}{q}=1$, and let the linear operators $A:\,L_p(\mathbb{[\alpha,\beta]})\to L_p(\mathbb{[\alpha,\beta]})$ and  $B:\,L_p(\mathbb{[\alpha,\beta]})\to L_p(\mathbb{[\alpha,\beta]})$ be defined, for almost every $t$, by
\begin{gather*} %\label{LinearVolterraOperatorSplittedKernels}
  (Ax)(t)= \int\limits_{\alpha}^{t} a(t)c(s) x(s)ds,\quad (Bx)(t)= \int\limits_{\alpha}^{t} b(t)e(s) x(s)ds,
\end{gather*}
where $\alpha,\beta $ are real numbers, $\alpha<\beta$, $a,b\in L_p([\alpha,\beta])$ and $c,e\in L_q([\alpha,\beta])$. Let $F$ be a polynomial
$F(z)=\sum\limits_{n=1}^{\deg(F)} \delta_n z^n$, with $\delta_n\in \mathbb{R}$ for $n=1,\ldots,\deg(F)$.
If $ae=0$ almost everywhere and $bc=0$ almost everywhere, then $AB=BF(A)=0$.
\end{proposition}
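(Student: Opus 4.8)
The plan is to show that each of the two hypotheses $ae=0$ and $bc=0$ (a.e.) separately forces one of the composite operators to vanish, and then assemble the conclusion. First I would observe that $bc=0$ a.e. kills $A$ composed with $B$ on the right in the following sense: compute $(ABx)(t)=\int_\alpha^t a(t)c(s)(Bx)(s)\,ds=\int_\alpha^t a(t)c(s)\bigl(\int_\alpha^s b(s)e(\sigma)x(\sigma)\,d\sigma\bigr)ds$. Inside the outer integral the integrand carries the factor $c(s)b(s)$, which is $0$ for a.e.\ $s$; hence the inner Lebesgue integral is $0$ for a.e.\ $s\in[\alpha,t]$, and therefore $(ABx)(t)=0$ for a.e.\ $t$, i.e.\ $AB=0$. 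This step is essentially the same Fubini-type interchange already used in the proof of Proposition~\ref{propOpIntVoltTypeSimple}, so it is routine.

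Next I would show $BF(A)=0$, and the cleanest route is to prove $BA^n=0$ for every $n\ge 1$, from which $BF(A)=\sum_{n=1}^{\deg(F)}\delta_n BA^n=0$ follows immediately (recall $F$ has no constant term). For $n=1$: $(BAx)(t)=\int_\alpha^t b(t)e(s)(Ax)(s)\,ds=\int_\alpha^t b(t)e(s)\bigl(\int_\alpha^s a(s)c(\sigma)x(\sigma)\,d\sigma\bigr)ds$, and now the relevant factor in the outer integrand is $e(s)a(s)$, which is $0$ a.e.\ by the hypothesis $ae=0$; hence $BA=0$. For the inductive step, using the iterated-kernel formula for $A^n$ from the proof of Proposition~\ref{propOpIntVoltTypeSimple} one writes $(BA^nx)(t)=\int_\alpha^t b(t)e(s_1)\bigl(\prod_{m=1}^{n-1}\int_\alpha^{s_m}a(s_m)c(s_{m+1})\,ds_{m+1}\bigr)\int_\alpha^{s_{n-1}}a(s_{n-1})c(s_n)x(s_n)\,ds_n\,ds_1$; the outermost inner integral again contains the factor $e(s_1)a(s_1)=0$ a.e., so the whole expression vanishes. (Alternatively, one may simply note $BA^n=(BA)A^{n-1}=0$.) Combining, $AB=0$ and $BF(A)=0$, so in particular $AB=BF(A)$, which is the claim.

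The only point requiring a little care — and the one I would flag as the main obstacle — is the justification that the vanishing of an a.e.-zero factor inside an iterated integral really does make the outer integral zero: one needs the operators $A,B$ to be well defined as maps $L_p\to L_p$ (guaranteed by $a,b\in L_p$, $c,e\in L_q$ and Theorem~\ref{TheoremHutsonPymOpwellDefinedInterval}, since a Volterra kernel $I_{\{s\le t\}}a(t)c(s)$ is dominated in the relevant norm), and one applies Fubini to legitimately regard $(BA^nx)(t)$ as an iterated integral whose innermost integrand contains the null factor $a\,e$ or $b\,c$. Once that bookkeeping is in place the argument is entirely elementary; no appeal to the delicate partition constructions preceding Proposition~\ref{propOpIntVoltTypeSimple} is needed here, because we are proving the operators are literally zero rather than extracting a necessary condition.
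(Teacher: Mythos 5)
Your proposal is correct and follows essentially the same route as the paper's own proof: show $AB=0$ because the outer integrand of the composition carries the factor $bc=0$ a.e., show $BA^n=0$ for all $n\ge 1$ because the corresponding integrand carries the factor $ae=0$ a.e., and conclude $BF(A)=0$ since $F$ has no constant term. The paper likewise opens by invoking H\"older's inequality for well-definedness and then performs exactly this computation, so no further comparison is needed.
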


\begin{proof}
By applying H\"older inequality we can conclude that operators are well defined and bounded.
Computing $AB$, we get for all $x\in L_p[\alpha,\beta]$, and almost every $t$,
\begin{align*}
(ABx)(t) & =\int\limits_{\alpha}^{\beta} a(t)c(s)\big(\int\limits_\alpha^\beta b(s)e(\tau)x(\tau) d\tau\big) ds\\
& =\int\limits_{\alpha}^{\beta} a(t)c(s)b(s)\big(\int\limits_\alpha^\beta e(\tau)x(\tau) d\tau\big) ds=0,
\end{align*}
 since $b(t)c(t)=0$ for almost every $t$.
Computing $A^n$, we get for all $x\in L_p[\alpha,\beta]$,
\begin{align*}
  (A^2x)(t) & =\int\limits_{\alpha}^{\beta} a(t)c(s)\big(\int\limits_\alpha^\beta a(s)c(\tau)x(\tau) d\tau\big) ds\\
  & =\int\limits_{\alpha}^{\beta} a(t)c(s)a(s)\big(\int\limits_\alpha^\beta c(\tau)x(\tau) d\tau\big) ds
\end{align*}
Therefore, for $n\ge 2$ we have for all $x\in L_p[\alpha,\beta]$,  and almost every $t$,
\begin{equation*}
   (A^n x)(t) = a(t)\int\limits_{\alpha}^{t} c(s_1)\big(\prod_{i=1}^{n-1}\int\limits_{\alpha}^{s_i} a(s_{i})c(s_{i+1})x(s_{i+1})ds_{i+1}\big)ds_1.
\end{equation*}
Hence, for $n\ge 2$ we have for all $x\in L_p[\alpha,\beta]$ and almost every $t$
\begin{equation*}
  (BA^n)x(t)=b(t) \int\limits_{\alpha}^{t} e(\tau)a(\tau) \big(\prod_{i=1}^{n}\int\limits_{\alpha}^{s_i} x(s_{i+1})ds_{i+1}ds_1\big) d\tau=0,
\end{equation*}
since $a(t)e(t)=0$ for almost every $t$. Therefore,
\begin{equation*}
  (BF(A))x(t)=\sum_{n=1}^{\deg(F)} \delta_n b(t) \int\limits_{\alpha}^{t} e(\tau)a(\tau) \big(\prod_{i=1}^{n}\int\limits_{\alpha}^{s_i} x(s_{i+1})ds_{i+1}ds_1\big) d\tau=0,
\end{equation*}
for almost every $t$.
  \qed
\end{proof}

\begin{example}
Let  $1<p<\infty$, and the linear operators $A:L_p[0,1]\to L_p[0,1]$ and $B:L_p[0,1]\to L_p[0,1]$ be defined, for almost every $t$, by
\begin{align*}
   & (Ax)(t)=\int\limits_{0}^{t} a(t)c(s)x(s)ds, \quad  (Bx)(t)=\int\limits_{0}^{t} b(t)e(s)x(s)ds,\\
   & a(t)=I_{[0,1/4]}(t)-I_{[1/2,3/4]}(t),\ b(t)=I_{[1/2,1]}(t),\\
   & c(t)=I_{[0,1/2]}(t),\ e(t)=I_{[1/4,1/2]}(t)+I_{[3/4,1]}(t)
\end{align*}
where $I_\Lambda$ is the indicator function of the set $\Lambda$. We have
$a(\cdot)e(\cdot)=0$ almost everywhere and $b(t)c(t)=0$ for almost every $t$. Therefore, by applying Proposition \ref{PropSufficientCondVolteraOpMonomialComutRelation} we conclude that $AB=BA^n=0$ for all integers $n\geq 2$.
\end{example}

\begin{remark}
In Proposition \ref{propOpIntVoltTypeSimple} if $n=0$, that is, if $AB=\delta_0 B$, $\delta_0\not=0$ then
 $abce=0$ almost everywhere.
 %In fact, by computing $(ABx)(t)$, $(Bx)(t)$ for $x(\cdot)=1$ we have
 % for almost every $t\in [\alpha,\beta]$ and by applying the dominated convergence theorem we have

Note that $ \bigcup\limits_{i,i,k,l=1}^{\infty}\{ D_{a_k} \cap D_{b_i} \cap D_{c_j}\cap D_{e_l} \}=\bigcup\limits_{m=1}^\infty P_m $ is a partition of $[\alpha,\beta]$. Therefore, for $x(\cdot)=1$  and $t\in P_1=D_{a_1} \cap D_{b_1} \cap D_{c_1}\cap D_{e_1}$, by applying the dominated convergence theorem we have
\begin{align*}
& \hspace{0mm}(ABx)(t)=\hspace{0mm}\int\limits_{\alpha}^{t}  a_1c_1 I_{D_{a_1}}(t)
  I_{D_{c_1}}(s_1) %\\
\hspace{0mm}  b_1e_1 (s_1I_{D_{e_1}}(s_1)-\gamma_{e_1})I_{D_{b_1}}(s_1) ds_1\\
&=
a_1c_1b_1e_1
\big(\frac{t^2}{2} I_{D_{b_1}\cap D_{e_1}\cap D_{a_1}\cap D_{c_1} }(t)
-\frac{\gamma_{c_1,b_1,e_1}^2}{2}I_{D_{e_1}\cap D_{c_1}\cap D_{b_1}}(\gamma_{c_1,b_1,e_1})I_{D_{a_1}}(t)
\\
&
- t\cdot \gamma_{e_1}I_{D_{b_1}\cap D_{e_1}\cap D_{a_1}\cap D_{c_1} }(t)
 +\gamma_{e_l}
\gamma_{c_1,b_1,e_1}I_{D_{e_1}\cap D_{c_1}\cap D_{b_1}}(\gamma_{c_1,b_1,e_1})I_{D_{a_1}}(t)\big)
\\
&
= a_1c_1b_1e_1
\big(\frac{t^2}{2} I_{D_{b_1}\cap D_{e_1}\cap D_{a_1}\cap D_{c_1} }(t)
-\frac{\gamma_{c_1,b_1,e_1}^2}{2}I_{D_{e_1}\cap D_{c_1}\cap D_{b_1}}(\gamma_{c_1,b_1,e_1})I_{D_{a_1}}(t)
\\
&
- t\cdot \gamma_{e_1}I_{D_{b_1}\cap D_{e_1}\cap D_{a_1}\cap D_{c_1} }(t)
  +\gamma_{e_1}
\gamma_{c_1,b_1,e_1}I_{D_{e_1}\cap D_{c_1}\cap D_{b_1}}(\gamma_{c_1,b_1,e_1})I_{D_{a_1}}(t)\big),
\end{align*}
where $\gamma_{e_l}=\inf D_{e_l}$, $\,\gamma_{c_j,b_i, e_l}=\left\{\begin{array}{cc}
 \inf \{D_{c_j}\cap D_{b_i}\cap D_{e_l}\},\ & \mbox{ if } \mu(D_{c_j}\cap D_{b_i}\cap D_{e_l})>0\\
 \beta +1,         &   \mbox{otherwise }.
\end{array}  \right.$
%\begin{eqnarray*}
%&& \hspace{-2mm}(ABx)(t)=\hspace{-1mm}\lim_{n_a,n_c\to\infty} \sum_{k=1}^{n_a} \sum_{j=1}^{n_c}\int\limits_{\alpha}^{t}  a_kc_j I_{D_{a_k}}(t)
%  I_{D_{c_j}}(s_1) %\\
%&&
%\hspace{-1mm}\left( \lim_{n_b,n_e\to\infty}\sum_{i=1}^{n_b} \sum_{l=1}^{n_e} b_ie_l (s_1I_{D_{e_l}}(s_1)-\gamma_{e_l})\cdot\right. \\
%&& \left.
%\cdot I_{D_{b_i}}(s_1) ds_1\right)
%=\lim_{n_a,n_b,n_c, n_e\to\infty} \sum_{k=1}^{n_a} \sum_{j=1}^{n_c}\sum_{i=1}^{n_b} \sum_{l=1}^{n_e}
%a_kc_jb_ie_l
%\left[\frac{t^2}{2} I_{D_{b_i}\cap D_{e_l}\cap D_{a_k}\cap D_{c_j} }(t)- \right.
%\\
%&&
%\frac{\gamma_{c_j,b_i,e_l}^2}{2}I_{D_{e_l}\cap D_{c_j}\cap D_{b_i}}(\gamma_{c_j,b_i,e_l})I_{D_{a_k}}(t)
%- t\cdot \gamma_{e_l}I_{D_{b_i}\cap D_{e_l}\cap D_{a_k}\cap D_{c_j} }(t)+\\
%&& \left. +\gamma_{e_l}
%\gamma_{c_j,b_i,e_l}I_{D_{e_l}\cap D_{c_j}\cap D_{b_i}}(\gamma_{c_j,b_i,e_l})I_{D_{a_k}}(t)\right]= \sum_{k,i,j,l=1}^{\infty} a_kc_jb_ie_l
%\left[\frac{t^2}{2}  I_{D_{b_i}\cap D_{e_l}\cap D_{a_k}\cap D_{c_j} }(t) \right.
%\\
%&&
%-\frac{\gamma_{c_j,b_i,e_l}^2}{2}I_{D_{e_l}\cap D_{c_j}\cap D_{b_i}}(\gamma_{c_j,b_i,e_l})I_{D_{a_k}}(t)
%- t\cdot \gamma_{e_l}I_{D_{b_i}\cap D_{e_l}\cap D_{a_k}\cap D_{c_j} }(t)\\
%&& \left. +\gamma_{e_l}
%\gamma_{c_j,b_i,e_l}I_{D_{e_l}\cap D_{c_j}\cap D_{b_i}}(\gamma_{c_j,b_i,e_l})I_{D_{a_k}}(t)\right],
%\end{eqnarray*}
%where $\gamma_{e_l}=\inf D_{e_l}$, $\,\gamma_{c_j,b_i, e_l}=\left\{\begin{array}{cc}
% \inf \{D_{c_j}\cap D_{b_i}\cap D_{e_l}\},\ & \mbox{ if } \mu(D_{c_j}\cap D_{b_i}\cap D_{e_l})>0\\
% \beta +1,         &   \mbox{otherwise }.
%\end{array}  \right.$
For $t\in P_1$, $(Bx)(t)=b_1e_1 tI_{D_{b_1}\cap D_{e_1}}(t)-\gamma_{e_1}e_1b_1 I_{D_{b_1}}(t) $.
% is
%\begin{eqnarray*}
%(Bx)(t)&=&\sum_{i,l=1}^{\infty}
%b_ie_l I_{D_{b_i}}(t)\cdot
%\left[tI_{ D_{e_l} }(t)-\gamma_{e_l}\right],
%\end{eqnarray*}
%almost everywhere.
Therefore, if $(ABx)(t)=\delta_0 (Bx)(t)$, $\delta_0\not=0$, then  $a_1c_1b_1e_1
 I_{D_{b_1}\cap D_{e_1}\cap D_{a_1}\cap D_{c_1} }(t)=0$ for almost every $t$.
By applying the same procedure as in the proof of Proposition \ref{propOpIntVoltTypeSimple} we conclude that
the set supp$\{abce\}$ has measure zero.
 \end{remark}

\begin{theorem}\label{ThmVolterraOpCommutativityCond}
Let $1\leq p\leq \infty$ and the linear operators $A:\,L_p(\mathbb{[\alpha,\beta]})\to L_p (\mathbb{[\alpha,\beta]})$ and  $B:\, L_p(\mathbb{[\alpha,\beta]})\to L_p(\mathbb{[\alpha,\beta]})$ be defined, for almost every $t\in[\alpha,\beta]$, by
\begin{gather*}
  (Ax)(t)= \int\limits_{\gamma_a}^{t} k_A(t,s)x(s)ds,\quad (Bx)(t)= \int\limits_{\gamma_b}^{t} {k}_B(t,s)x(s)ds,
\end{gather*}
where $\alpha,\gamma\in \mathbb{R}$, $\alpha \leq \gamma_a\leq \gamma_b\le \beta$, and $k_A:[\alpha,\beta]^2\to\mathbb{R},\ {k}_B:[\alpha,\beta]^2\to\mathbb{R}$ are measurable functions. % and satisfy  condition
%\begin{gather}\label{IntegrabilityConditionKernelVolterraOp}
%  \int\limits_{\alpha}^\beta\left(\int\limits_{\alpha}^{t}|k_A(t,s)|^q ds\right)^{p/q}dt <\infty, \quad  \int\limits_{\alpha}^\beta\left(\int\limits_{\alpha}^{t}|{k}_B(t,s)|^q ds\right)^{p/q}dt<\infty,
%\end{gather}
%where $1<q<\infty$ such that $ \frac{1}{p}+\frac{1}{q}=1$.
%%%%%%%%%%%%%%%%%%%%%%%%%%%%%%\eqref{IntegrabilityConditionKernelVolterraOp}.
We set
\begin{eqnarray*}%\label{GammaDomainOfIntegration}
  \Gamma &=&\{(t,s,\tau)\in [\gamma_b,\beta]^3:\ \gamma_b\leq t\leq \beta,\ \gamma_b\leq s\leq t,\ \gamma_b\leq \tau \leq t\},\\ %\quad
 % \label{DeltaDomainOfIntegration}
  \Delta &=& [\gamma_b,\beta]\times[\gamma_a,\gamma_b].  %\{(t,\tau)\in \mathbb{R}^2:\ \gamma\le t\le \beta,\ \alpha\le \tau \le \gamma \}.
\end{eqnarray*}
Then $AB= \delta BA$ for given $\delta\in \mathbb{R}\setminus \{0\}$
if and only if
\begin{enumerate}[label=\textup{\arabic*.}, ref=\arabic*]
  \item\label{volterraQuantumPlaneRelationCondition1} for almost every  $(t,s,\tau)\in \Gamma$, we have $ k_A(t,s){k}_B(s,\tau)=\delta {k}_B(t,s)k_A(s,\tau)$;
  \item\label{volterraQuantumPlaneRelationCondition2} for almost every $(t,\tau)\in\Delta$ we have
   \begin{equation*}
    %  \int\limits_{\alpha}^{\tau} k_A(t,s){k}_B(s,\tau)ds=
    %\delta
     \int\limits_{\gamma_b}^{t} {k}_B(t,s)k_A(s,\tau)ds=0.
   \end{equation*}
  % \item\label{volterraQuantumPlaneRelationCondition3}  for almost every $(t,\tau)\in [\alpha,\gamma]^2$ we have
  % \begin{equation*}
  %   \delta \int\limits_{\gamma}^{t} k_B(t,s)k_A(s,\tau)ds=\int\limits_{\alpha}^{\tau} k_A(t,s)k_B(s,\tau)ds.
  % \end{equation*}
\end{enumerate}
\end{theorem}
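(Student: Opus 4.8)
The plan is to compute the kernels of the operators $AB$ and $BA$ explicitly using the Fubini theorem, then translate the operator identity $AB=\delta BA$ into an equality of integral operators on $L_p[\alpha,\beta]$, and finally apply the characterization of when two such integral operators coincide (Lemma~\ref{lemEqIntForAllLpFunctFinitMeasureA}) to split the condition into the two stated pieces according to the geometry of the triangular domains of integration. First I would observe that, since both operators are of Volterra type with lower limits $\gamma_a\le\gamma_b$, for $x\in L_p[\alpha,\beta]$ the composition $ABx$ is obtained by writing $(ABx)(t)=\int_{\gamma_a}^{t}k_A(t,s)\big(\int_{\gamma_b}^{s}k_B(s,\tau)x(\tau)d\tau\big)ds$; since the inner integral vanishes for $s<\gamma_b$, the outer integration effectively runs over $[\gamma_b,t]$, so after exchanging the order of integration (justified by measurability and the $L_q$/$L_p$ pairing, exactly as in the proof of Theorem~\ref{thmBothIntOPKernelsGen}) one gets $(ABx)(t)=\int_{\gamma_b}^{t}\big(\int_{\tau}^{t}k_A(t,s)k_B(s,\tau)ds\big)x(\tau)d\tau$. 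Analogously $(\delta BAx)(t)=\delta\int_{\gamma_a}^{t}\big(\int_{\max(\tau,\gamma_b)}^{t}k_B(t,s)k_A(s,\tau)ds\big)x(\tau)d\tau$, where now the outer variable $\tau$ ranges over $[\gamma_a,t]$ because $A$ has the lower limit $\gamma_a$.

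The key point is the mismatch of the domains: the kernel of $AB$, as a function of $(t,\tau)$, is supported in $\{\gamma_b\le\tau\le t\le\beta\}$, whereas the kernel of $BA$ is supported in $\{\gamma_a\le\tau\le t\le\beta\}$. For a fixed $t$, I would apply Lemma~\ref{lemEqIntForAllLpFunctFinitMeasureA} on the measure space $[\alpha,\beta]$ with $G_1=[\gamma_b,t]$ (the $\tau$-support of the $AB$-kernel), $G_2=[\gamma_a,t]$ (the $\tau$-support of the $BA$-kernel), $G=G_1\cap G_2=[\gamma_b,t]$, $f(\tau)=\int_{\tau}^{t}k_A(t,s)k_B(s,\tau)ds$ and $g(\tau)=\delta\int_{\max(\tau,\gamma_b)}^{t}k_B(t,s)k_A(s,\tau)ds$. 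On $G_1\setminus G$ the $AB$-kernel is identically zero, so condition $2.$ of the lemma there is automatic; on $G_2\setminus G=[\gamma_a,\gamma_b]$ the lemma forces the $BA$-kernel to vanish, which (noting that for $\tau\in[\gamma_a,\gamma_b]$ one has $\max(\tau,\gamma_b)=\gamma_b$) is precisely condition~\ref{volterraQuantumPlaneRelationCondition2}; and on $G=[\gamma_b,t]$ the lemma forces $f=g$ for almost every $\tau$, i.e. $\int_{\tau}^{t}k_A(t,s)k_B(s,\tau)ds=\delta\int_{\tau}^{t}k_B(t,s)k_A(s,\tau)ds$ for almost every $(t,\tau)$ with $\gamma_b\le\tau\le t$. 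To pass from this integrated identity to the pointwise identity~\ref{volterraQuantumPlaneRelationCondition1} on the triangular set $\Gamma$, I would run the differentiation/localization argument used at the end of the proof of Lemma~\ref{lemEqIntForAllLpFunctFinitMeasureA}: testing the equality against indicator functions of arbitrary measurable subsets of the inner interval $[\tau,t]$ and invoking that a function whose integral over every subset vanishes is zero a.e., one obtains $k_A(t,s)k_B(s,\tau)=\delta k_B(t,s)k_A(s,\tau)$ for almost every $(t,s,\tau)$ with $\gamma_b\le\tau\le s\le t\le\beta$, which is the description of $\Gamma$. The converse direction is immediate: assuming~\ref{volterraQuantumPlaneRelationCondition1} and~\ref{volterraQuantumPlaneRelationCondition2}, substituting back into the two kernel formulas shows the kernels of $AB$ and $\delta BA$ agree a.e., hence $ABx=\delta BAx$ for every $x\in L_p[\alpha,\beta]$.

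The main obstacle I anticipate is bookkeeping the domains of integration correctly when $\gamma_a<\gamma_b$ strictly: one must be careful that in $(ABx)(t)$ the outer integral silently shrinks from $[\gamma_a,t]$ to $[\gamma_b,t]$ because $B$ kills everything below $\gamma_b$, while in $(\delta BAx)(t)$ it does not, and that after Fubini the inner $s$-integral in the $AB$-term runs over $[\tau,t]\subset[\gamma_b,t]$ whereas in the $BA$-term it runs over $[\max(\tau,\gamma_b),t]$; getting these limits right is exactly what produces the split into the set $\Gamma$ and the rectangle $\Delta$. A secondary technical point is the justification of Fubini in the $L_\infty$ endpoint case and the legitimacy of testing against indicator functions there, but this is handled exactly as in the already-proved Theorem~\ref{thmBothIntOPKernelsGen} and Lemma~\ref{lemEqIntForAllLpFunctFinitMeasureA}, so I would simply cite those.
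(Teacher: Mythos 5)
Your overall route is the same as the paper's: compute the kernels of $AB$ and $BA$ by Fubini, observe that the $AB$-kernel is supported in $\{\gamma_b\le\tau\le t\}$ while the $BA$-kernel lives on $\{\gamma_a\le\tau\le t\}$, apply Lemma \ref{lemEqIntForAllLpFunctFinitMeasureA} in the $\tau$-variable for fixed $t$, and read off condition \ref{volterraQuantumPlaneRelationCondition2} on the rectangle $\Delta$ together with the integrated identity $\int_{\tau}^{t}k_A(t,s)k_B(s,\tau)\,ds=\delta\int_{\tau}^{t}k_B(t,s)k_A(s,\tau)\,ds$ on the triangle $\{\gamma_b\le\tau\le t\}$. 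Up to that point, and for the converse implication, your argument coincides with the paper's.

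The gap is in the final step, the passage from this integrated identity to the pointwise identity of condition \ref{volterraQuantumPlaneRelationCondition1}. The localization device you invoke is not available here: in Lemma \ref{lemEqIntForAllLpFunctFinitMeasureA} one may test against indicators of arbitrary sets because the identity holds for every $x\in L_p$, but in the present situation the test function $x$ enters only through the outer variable $\tau$ (it appears as $x(\tau)$, never as a function of $s$), and once the lemma has been applied it is used up. What remains is, for each fixed $(t,\tau)$, the single scalar equation $\int_{\tau}^{t}\bigl(k_A(t,s)k_B(s,\tau)-\delta k_B(t,s)k_A(s,\tau)\bigr)ds=0$; there is no remaining freedom to restrict the $s$-integration to measurable subsets of $[\tau,t]$. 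Moreover, the implication you are trying to prove is false in general: take $\gamma_a=\gamma_b=\alpha$, $\delta=1$, $k_A\equiv 1$ and $k_B(t,s)=t-s$, so that $B=A^2$ and $AB=BA=A^3$, yet $k_A(t,s)k_B(s,\tau)=s-\tau$ differs from $k_B(t,s)k_A(s,\tau)=t-s$ on a set of positive measure in $\Gamma$. The paper's own proof simply asserts this equivalence with a ``$\Leftrightarrow$'' and no justification, so you have put your finger on exactly the weak point of the published argument; but the repair you propose cannot close it, and only the direction ``conditions \ref{volterraQuantumPlaneRelationCondition1} and \ref{volterraQuantumPlaneRelationCondition2} imply $AB=\delta BA$'' is actually established by either argument.
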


\begin{proof}
%The operators $A$ and $B$ are well defined and so their compositions $AB$, $BA$ \cite{HutsonPym}.
We compute $AB$ and $BA$ by applying Fubini Theorem (see \cite{AdamsG}) and considering that $\gamma_a\le \gamma_b$, we have
  \begin{align*}
    &(ABx)(t)=\int\limits_{\gamma_a}^{t} k_A(t,s)\big(\int\limits_{\gamma_b}^{s}{k}_B(s,\tau)x(\tau)d\tau\big)ds
    \\
    &=\int\limits_{\gamma_a}^{\gamma_b} x(\tau) \big(\int\limits_{\gamma_a}^{\tau}k_A(t,s)I_{[\gamma_b,\beta]}(\tau){k}_B(s,\tau)ds\big)d\tau+
    \int\limits_{\gamma_b}^{t} x(\tau) \big(\int\limits_{\tau}^{t}k_A(t,s){k}_B(s,\tau)ds\big)d\tau\\
    &=\int\limits_{\gamma_a}^{\gamma_b} k_{1,AB}(t,\tau) x(\tau)d\tau+\int\limits_{\gamma_b}^{t} k_{2,AB}(t,\tau) x(\tau)d\tau,
  \end{align*}
  where
  \begin{eqnarray}\label{IntervalTauKAB1volterraQuantumPlaneRelation}
    k_{1,AB}(t,\tau)&=&\int\limits_{\gamma_a}^{\tau}k_A(t,s)I_{[\gamma_b,\beta]}(\tau){k}_B(s,\tau)ds=0,\quad \alpha\leq \tau  \leq \gamma_b \\
    \label{IntervalTauKAB2volterraQuantumPlaneRelation}
    k_{2,AB}(t,\tau)&=&\int\limits_{\tau}^{t}k_A(t,s){k}_B(s,\tau)ds,\quad \gamma_b \leq \tau \leq t .
  \end{eqnarray}
   We use the same procedure to compute $BA$. We have
   \begin{eqnarray*}
    (BAx)(t)&=&\int\limits_{\gamma_b}^{t} {k}_B(t,s)\big(\int\limits_{\gamma_a}^{s}{k}_A(s,\tau)x(\tau)d\tau\big)ds
    \\
    &=&\int\limits_{\gamma_a}^{\gamma_b} x(\tau) \big(\int\limits_{\gamma_b}^{t}{k}_B(t,s){k}_A(s,\tau)ds\big)d\tau+
    \int\limits_{\gamma_b}^{t} x(\tau) \big(\int\limits_{\tau}^{t}{k}_B(t,s){k}_A(s,\tau)ds\big)d\tau\\
    &=&\int\limits_{\gamma_a}^{\gamma_b} k_{1,BA}(t,\tau) x(\tau)d\tau+\int\limits_{\gamma_b}^{t} k_{2,BA}(t,\tau) x(\tau)d\tau,
  \end{eqnarray*}
  where
   \begin{eqnarray}\label{IntervalTauK1BAvolterraQuantumPlaneRelation}
    k_{1,BA}(t,\tau)&=&\int\limits_{\gamma_b}^{t} {k}_B(t,s){k}_A(s,\tau)ds, \ \gamma_a \leq \tau \leq \gamma_b
    \\ \label{IntervalTauK2BAvolterraQuantumPlaneRelation}
    k_{2,BA}(t,\tau)&=&\int\limits_{\tau}^{t} {k}_B(t,s){k}_A(s,\tau)ds, \ \gamma_b \leq \tau \leq t.
  \end{eqnarray}
   Thus, for all $x\in L_p[\alpha,\beta]$ and given $\delta\in\mathbb{R}\setminus\{0\}$ we have $ABx=\delta BAx$  if and only if
   \begin{align}\nonumber
    & \resizebox{1.0\hsize}{!}{$\displaystyle \int\limits_{\gamma_a}^{\gamma_b} k_{1,AB}(t,\tau) x(\tau)d\tau  + \int\limits_{\gamma_b}^{t} k_{2,AB}(t,\tau) x(\tau)d\tau=
    \delta \int\limits_{\gamma_a}^{\gamma_b} k_{1,BA}(t,\tau) x(\tau)d\tau %\\ \nonumber
      +  \delta\int\limits_{\gamma_b}^{t} k_{2,BA}(t,\tau) x(\tau)d\tau $}\\
     &  \Leftrightarrow  \label{VolterraCommutationCond}
      \resizebox{0.82\hsize}{!}{$\displaystyle\int\limits_{\gamma_a}^{\gamma_b} %[
      \delta k_{1,BA}(t,\tau) %k_{1,AB}(t,\tau)]
      x(\tau)d\tau =\int\limits_{\gamma_b}^{t} [-\delta k_{2,BA}(t,\tau)+k_{2,AB}(t,\tau)] x(\tau)d\tau. $}
   \end{align}
   We consider two cases.

\noindent {\it Case 1}: If $\gamma_b \leq t \leq \beta$ and $\gamma_a\le \gamma_b$, the set $[\gamma,t]\cap [\gamma_a,\gamma_b]=\{\gamma_b\}$ has measure zero. By applying Lemma \ref{lemEqIntForAllLpFunctFinitMeasureA}, we conclude that \eqref{VolterraCommutationCond} is equivalent to the following: for almost every $t>\gamma_b$
   \begin{eqnarray} \label{EqualityK1ABK1BAvolterraQuantumPlaneRelation}
    \delta k_{1,BA}(t,\tau)&=& 0,\ \mbox{ for almost every }\ \tau\in [\gamma_a,\gamma_b],  \\
    \label{EqualityK2ABK2BAvolterraQuantumPlaneRelation}
    \delta k_{2,BA}(t,\tau)&=&k_{2,AB}(t,\tau), \ \mbox{ for almost every }\ \tau\in [\gamma_b,t].
   \end{eqnarray}
   Using \eqref{IntervalTauKAB1volterraQuantumPlaneRelation}, \eqref{IntervalTauKAB2volterraQuantumPlaneRelation}, \eqref{IntervalTauK1BAvolterraQuantumPlaneRelation},   \eqref{IntervalTauK2BAvolterraQuantumPlaneRelation}, yields that
\eqref{EqualityK1ABK1BAvolterraQuantumPlaneRelation} and \eqref{EqualityK2ABK2BAvolterraQuantumPlaneRelation} are equivalent to
   \begin{gather*}
   \delta \int\limits_{\gamma_b}^{t} {k}_B(t,s)k_A(s,\tau)ds=0, %\int\limits_{\alpha}^{\tau} k_A(t,s){k}_B(s,\tau)ds
   \ \mbox{for almost every }\ (t,\tau)\in\Delta,   \\ \vspace{0.2cm}
   \begin{array}{l} \displaystyle{\int\limits_{\tau}^{t}} k_A(t,s){k}_B(s,\tau)ds=\delta \displaystyle{\int\limits_{\tau}^{t}} {k}_B(t,s)k_A(s,\tau)ds\\
   \mbox{for almost every}\ (t,\tau)\in[\gamma_b,\beta]\times[\gamma_b,t]
   \end{array} \Leftrightarrow
   \begin{array}{l}
   k_A(t,s){k}_B(s,\tau)\\
   \hspace{1cm}=\delta {k}_B(t,s)k_A(s,\tau) \\
   \mbox{ for almost every } (t,s,\tau)\in \Gamma.
   \end{array}
   \end{gather*}

\noindent {\it Case 2}: If $\gamma_a \leq t \leq \gamma_b$, then $(Bx)(t)=0$ for all $x\in L_p[\alpha,\beta]$. Therefore, the commutation relation is satisfied.
   %  by applying Lemma \ref{lemEqIntForAllLpFunctFinitMeasureA}, we conclude that \eqref{VolterraCommutationCond} is equivalent to the following: for almost every $\alpha \leq t \leq \gamma$
   %\begin{eqnarray} \label{EqualityK1ABK1BAvolterraQuantumPlaneRelationSquare}
   % \delta k_{1,BA}(t,\tau)&=&k_{1,AB}(t,\tau),\ \mbox{ for almost every }\ \tau\in [\alpha,\gamma],  \\
   % \label{EqualityK2ABK2BAvolterraQuantumPlaneRelationSquare}
   % \delta k_{2,BA}(t,\tau)&=&k_{2,AB}(t,\tau), \ \mbox{ for almost every }\ \tau\in [\gamma,t].
   %\end{eqnarray}
   % we have from \eqref{IntervalTauKAB2volterraQuantumPlaneRelation}
   % $k_{2,{AB}}(t,\tau)=0$ for almost every $\gamma\le \tau \leq t$ and from \eqref{IntervalTauK2BAvolterraQuantumPlaneRelation} $k_{2,{BA}}(t,\tau)=0$ for almost every $\gamma\le \tau \leq t$. Therefore, Equation \eqref{EqualityK2ABK2BAvolterraQuantumPlaneRelationSquare} is satisfied.   By replacing Formulas \eqref{IntervalTauKAB1volterraQuantumPlaneRelation} and \eqref{IntervalTauK1BAvolterraQuantumPlaneRelation}, Equation \eqref{EqualityK1ABK1BAvolterraQuantumPlaneRelationSquare} is equivalent to
   % \begin{equation*}
   %  \delta \int\limits_{\gamma}^{t} k_B(t,s)k_A(s,\tau)ds=\int\limits_{\alpha}^{\tau} k_A(t,s)k_B(s,\tau)ds
   % \end{equation*}
   % for almost every $(t,\tau)\in [\alpha,\gamma]^2$.
   \qed
\end{proof}

\begin{remark}
One can give a different proof for Theorem \ref{ThmVolterraOpCommutativityCond} by noticing that $\alpha\leq\gamma_a\leq \gamma_b\leq \beta$, and that for almost every $t$,
\begin{align*}
 & (Ax)(t)=\int\limits_{\gamma_a}^{t} k_A(t,s)x(s)ds=\int\limits_{\gamma_a}^{\beta} I_{[\gamma_a,t]}(s)k_A(t,s)x(s)ds,\\
 & (Bx)(t)=\int\limits_{\gamma_b}^{t} k_B(t,s)x(s)ds=\int\limits_{\gamma_b}^{\beta} I_{[\gamma_b,t]}(s)k_B(t,s)x(s)ds,
\end{align*}
and then applying Theorem \ref{thmBothIntOPKernelsGen}. Since
$[\gamma_a,\beta]\cap[\gamma_b,\beta]=[\gamma_b,\beta]$, the relation $AB=\delta BA$ holds for some $\delta\in\mathbb{R}\setminus\{0\}$ if and only if
\begin{enumerate}[label=\textup{\arabic*.}, ref=\arabic*, leftmargin=*]
  \item for almost every $(t,\tau)\in [\alpha,\beta]\times [\gamma_b,\beta]$,
   \begin{align*}
    &\resizebox{0.95\hsize}{!}{$\displaystyle \int\limits_{\gamma_a}^{\beta} I_{[\gamma_a,t]}(s)k_A(t,s)I_{[\gamma_b,s]}(\tau)k_B(s,\tau)ds=
           \delta\int\limits_{\gamma_b}^{\beta} I_{[\gamma_b,t]}(s)k_B(t,s)I_{[\gamma_a,s]}(\tau)k_A(s,\tau)ds
           $}
       \\
       & \Leftrightarrow  \int\limits_{\tau}^{t} k_A(t,s)k_B(s,\tau)ds=\delta\int\limits_{\tau}^{t} k_B(t,s)k_A(s,\tau)ds.
        \end{align*}
  \item for almost every $(t,\tau)\in [\alpha,\beta]\times [\gamma_a,\gamma_b]$,
        \begin{align*}
           & \delta\int\limits_{\gamma_b}^{\beta} I_{[\gamma_b,t]}(s)k_B(t,s)I_{[\gamma_a,s]}(\tau)k_A(s,\tau)ds=\delta\int\limits_{\gamma_b}^{t} k_B(t,s)k_A(s,\tau)ds=0
           \\
          & \Leftrightarrow \int\limits_{\gamma_b}^{t} k_B(t,s)k_A(s,\tau)ds=0.
        \end{align*}
\end{enumerate}
In the case of commutation relations $AB=\delta BA^n$, $n \geq 2$ applying this method might be tricky because of the products of indicator functions in $\mathbb{R}^{n+1}$ with $n+1$ variables.
\end{remark}

\begin{corollary}\label{CorCondCommutativityVolterraOperator}
Let $A:L_p(\mathbb{[\alpha,\beta]})\to L_p (\mathbb{[\alpha,\beta]})$, $B:L_p(\mathbb{[\alpha,\beta]})\to L_p(\mathbb{[\alpha,\beta]})$  be linear operators on
$L_p(\mathbb{[\alpha,\beta]})$, $1<p<\infty$, defined for almost every $t\in[\alpha,\beta]$ by
$$
  (Ax)(t)= \int\limits_{\alpha}^{t} k_A(t,s)x(s)ds,\quad (Bx)(t)= \int\limits_{\alpha}^{t} {k}_B(t,s)x(s)ds,
$$
where $\alpha,\gamma\in \mathbb{R}$, $\alpha\le t\le \beta$, and $k_A:[\alpha,\beta]^2\to\mathbb{R},\ {k}_B:[\alpha,\beta]^2\to\mathbb{R}$ are measurable functions. %and satisfy  condition \eqref{IntegrabilityConditionKernelVolterraOp}.
Let
$
  \Omega_{k_B} =\{(t,s,\tau)\in[\alpha,\beta]^3:\ {k}_B(t,s)\not=0 \ \mbox{and}\ {k}_B(s,\tau)\not=0\}.
$
Then, $AB=\delta BA$ for given $\delta\in\mathbb{R}\setminus \{0\}$ if the following conditions are fulfilled:
\begin{enumerate}[label=\textup{\arabic*.}, ref=\arabic*]
  \item  for almost every $(t,s,\tau)\in \Omega_{k_B}$ we have $k_A(t,s)=k_A(s,\tau)=0$  or $\delta=1$. And,
   if $\delta=1$ then for almost every $(t,s,\tau)\in \Omega_{k_B}$ we have $k_A(t,s)=\lambda k_B(t,s)$, $k_A(s,\tau)=\lambda k_{B}(s,\tau)$ for some real constant $\lambda$.
  \item for almost every $(t,s,\tau)\in [\alpha,\beta]^3\setminus\Omega_{k_B}$, the sets $\Omega_{g_A,g_B}$ and $\Omega_{h_A,h_B}$ defined by
  \begin{gather*}
   \Omega_{g_A,g_B}\stackrel{\rm def}{=} {\rm supp}\, g_A \cap {\rm supp }\, g_B  \cap   ([\alpha,\beta]^3\setminus \Omega_{k_B}),\\
   \Omega_{h_A,h_B}\stackrel{\rm def}{=} {\rm supp}\, h_A \cap {\rm supp }\, h_B  \cap   ([\alpha,\beta]^3\setminus \Omega_{k_B}),\\
  g_A:[\alpha,\beta]^3\to \mathbb{R},\ g_B:[\alpha,\beta]^3\to \mathbb{R},\ h_B:[\alpha,\beta]^3\to \mathbb{R},\ h_B:[\alpha,\beta]^3\to \mathbb{R},\\
      g_A(t,s,\tau)= k_A(t,s),\quad  g_B(t,s,\tau)= k_B(s,\tau),\\
      h_A(t,s,\tau)= k_A(s,\tau),\quad  h_B(t,s,\tau)= k_B(t,s).
    \end{gather*}
have measure zero in $\mathbb{R}^3$.
  \end{enumerate}
\end{corollary}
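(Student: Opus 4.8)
The plan is to derive the result from Theorem~\ref{ThmVolterraOpCommutativityCond}, applied with $\gamma_a=\gamma_b=\alpha$, since both operators here are Volterra operators with the same lower integration limit $\alpha$. With this choice the set $\Delta=[\gamma_b,\beta]\times[\gamma_a,\gamma_b]=[\alpha,\beta]\times\{\alpha\}$ is a Lebesgue null set in $\mathbb{R}^2$, so condition~\ref{volterraQuantumPlaneRelationCondition2} of that theorem holds vacuously. It therefore suffices to establish condition~\ref{volterraQuantumPlaneRelationCondition1}, namely that
\[
  k_A(t,s)\,{k}_B(s,\tau)=\delta\,{k}_B(t,s)\,k_A(s,\tau)
\]
for almost every $(t,s,\tau)$ in $\Gamma=\{(t,s,\tau)\in[\alpha,\beta]^3:\ \alpha\le s\le t,\ \alpha\le\tau\le t\}$, and for this I would split $\Gamma$ along $\Omega_{k_B}$.

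On $\Gamma\cap\Omega_{k_B}$ I would use hypothesis~1. Since $\delta$ is a single fixed number, the disjunction there is a genuine dichotomy. If $\delta\neq 1$, then $k_A(t,s)=k_A(s,\tau)=0$ for almost every $(t,s,\tau)\in\Omega_{k_B}$, so both sides of the identity are $0$. If $\delta=1$, then there is a constant $\lambda$ with $k_A(t,s)=\lambda\,{k}_B(t,s)$ and $k_A(s,\tau)=\lambda\,{k}_B(s,\tau)$ for almost every $(t,s,\tau)\in\Omega_{k_B}$, hence
\[
  k_A(t,s)\,{k}_B(s,\tau)=\lambda\,{k}_B(t,s)\,{k}_B(s,\tau)={k}_B(t,s)\,k_A(s,\tau)=\delta\,{k}_B(t,s)\,k_A(s,\tau).
\]
In both cases the identity holds almost everywhere on $\Gamma\cap\Omega_{k_B}$.

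On $\Gamma\setminus\Omega_{k_B}$, which is contained in $[\alpha,\beta]^3\setminus\Omega_{k_B}$, I would use hypothesis~2. Since $g_A g_B=k_A(t,s){k}_B(s,\tau)$ and $\Omega_{g_A,g_B}={\rm supp}\,g_A\cap{\rm supp}\,g_B\cap([\alpha,\beta]^3\setminus\Omega_{k_B})$ has measure zero, for almost every $(t,s,\tau)\in[\alpha,\beta]^3\setminus\Omega_{k_B}$ one has $k_A(t,s)=0$ or ${k}_B(s,\tau)=0$, so $k_A(t,s){k}_B(s,\tau)=0$; similarly, since $h_A h_B=k_A(s,\tau){k}_B(t,s)$ and $\Omega_{h_A,h_B}$ has measure zero, $k_A(s,\tau){k}_B(t,s)=0$ almost everywhere there. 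Thus both sides of the identity vanish almost everywhere on $\Gamma\setminus\Omega_{k_B}$. Putting the two pieces together, condition~\ref{volterraQuantumPlaneRelationCondition1} of Theorem~\ref{ThmVolterraOpCommutativityCond} is satisfied, so $AB=\delta BA$.

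This is essentially bookkeeping once Theorem~\ref{ThmVolterraOpCommutativityCond} is available. The only points needing care are that the disjunction in hypothesis~1 must be read as a global alternative because $\delta$ does not vary, and that the null-set conclusions of hypothesis~2, stated on $[\alpha,\beta]^3\setminus\Omega_{k_B}$, restrict to the subregion $\Gamma\setminus\Omega_{k_B}$ actually occurring in the theorem; I do not anticipate a genuine obstacle.
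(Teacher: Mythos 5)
Your proposal is correct and follows essentially the same route as the paper: apply Theorem \ref{ThmVolterraOpCommutativityCond} with $\gamma_a=\gamma_b=\alpha$, observe that $\Delta$ is then a null set so condition \ref{volterraQuantumPlaneRelationCondition2} is vacuous, and verify condition \ref{volterraQuantumPlaneRelationCondition1} by splitting $\Gamma$ along $\Omega_{k_B}$. In fact your case analysis (the $\delta\neq 1$ versus $\delta=1$ dichotomy on $\Omega_{k_B}$, and the vanishing of both products off $\Omega_{k_B}$ via the null sets $\Omega_{g_A,g_B}$ and $\Omega_{h_A,h_B}$) spells out the ``direct computation'' that the paper's proof only asserts.
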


\begin{proof}
    By Theorem  \ref{ThmVolterraOpCommutativityCond}, when $\alpha=\gamma_a=\gamma_b$, we only remain with the condition \ref{volterraQuantumPlaneRelationCondition1} since the set $\Delta$ %and $[\gamma_a,\gamma_b]^2$
    has measure zero in $\mathbb{R}^2$ when $\gamma_a=\gamma_b$, so condition \ref{volterraQuantumPlaneRelationCondition2}
   can be removed. So, by direct computation we get a particular case of condition \ref{volterraQuantumPlaneRelationCondition1}, and thus $AB=\delta BA$. \qed
\end{proof}

  \begin{remark}
  It would be interesting to investigate necessary conditions for commutativity of Volterra operators. This involves studying properties of the set $\Omega_{k_B}$ and  the kernels $k_A(\cdot,\cdot)$, $k_B(\cdot,\cdot)$.
  \end{remark}

\begin{proposition}\label{propNecessaryCondCommuativityOpIntVoltTypeSimple}
Let $1< p<\infty$, $1<q<\infty$ be such that $\frac{1}{p}+\frac{1}{q}=1$, and the operators $A:\,L_p(\mathbb{[\alpha,\beta]})\to L_p(\mathbb{[\alpha,\beta]})$ and $B:\,L_p(\mathbb{[\alpha,\beta]})\to L_p(\mathbb{[\alpha,\beta]})$ be defined, for almost every $t$, by
\begin{gather*} % \label{LinearVolterraOperatorSplittedKernels}
  (Ax)(t)= \int\limits_{\alpha}^{t} a(t)c(s)x(s)ds,\quad (Bx)(t)= \int\limits_{\alpha}^{t} b(t)e(s)x(s)ds,
\end{gather*}
where $\alpha\in\mathbb{R}$, $a, b, c, e$ are measurable simple functions,  $a, b \in L_p(\mathbb{[\alpha,\beta]})$ and $c, e\in L_q(\mathbb{[\alpha,\beta]})$.
Then, if $AB=\delta BA\not=0$, for given $\delta\in\mathbb{R}\setminus \{0\}$, then
 ${\rm supp}\, \{(\delta-1)abce\}$ is a set of measure zero.
\end{proposition}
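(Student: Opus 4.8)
The plan is to reuse, almost verbatim, the iterated-integral computation from the proof of Proposition \ref{propOpIntVoltTypeSimple}, now specialized to the degree-one monomial $F(z)=\delta z$. First observe that if $\delta=1$ there is nothing to prove, since then $(\delta-1)abce\equiv 0$ and its support is empty. Hence I assume $\delta\neq 1$; it then suffices to prove that $abce=0$ almost everywhere, because ${\rm supp}\,\{(\delta-1)abce\}\subseteq{\rm supp}\,\{abce\}$, so the former has measure zero once the latter does.

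Test the operator identity $AB=\delta BA$ on the constant function $x\equiv 1$, which lies in $L_p[\alpha,\beta]$ since the interval is bounded, with $Ax, Bx, ABx, BAx\in L_p[\alpha,\beta]$ because $a,b\in L_p[\alpha,\beta]$ and $c,e\in L_q[\alpha,\beta]\subseteq L_1[\alpha,\beta]$. Let $\bigcup_r P_r$ be the common refinement partition of $[\alpha,\beta]$ introduced before Proposition \ref{propOpIntVoltTypeSimple}, fix an index $r$ with $\mu(P_r)>0$, and put $t_{r-1}:=\sup P_{r-1}$, with the convention $t_0:=\alpha$. On $P_r$ the four simple functions are constant, say $a\equiv a_{k_0}$, $b\equiv b_{i_0}$, $c\equiv c_{j_0}$, $e\equiv e_{l_0}$. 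Computing $(ABx)(t)$ and $(BAx)(t)$ by Fubini's theorem exactly as in the proof of Proposition \ref{propOpIntVoltTypeSimple}, and splitting every outer integral as $\int_\alpha^t=\int_\alpha^{t_{r-1}}+\int_{t_{r-1}}^t$ for $t\in P_r$, the part over $[\alpha,t_{r-1}]$ factors through the value $a(t)=a_{k_0}$ (respectively $b(t)=b_{i_0}$), hence contributes only a constant in $t$ on $P_r$, whereas the part over $[t_{r-1},t]$ is an explicit polynomial in $t$ of degree two. The point distinguishing this degree-one case from the case $\deg F\ge 2$ handled in Proposition \ref{propOpIntVoltTypeSimple} is that $BA=BA^1$ yields on $P_r$ a polynomial of the same degree two as $AB$; inspecting the two formulas, the coefficient of $t^2$ equals $\frac{1}{2}a_{k_0}b_{i_0}c_{j_0}e_{l_0}$ in both $(ABx)(t)$ and $(BAx)(t)$.

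Consequently $(ABx)(t)-\delta(BAx)(t)$, restricted to $P_r$, is a polynomial in $t$ of degree at most two whose coefficient of $t^2$ equals $\frac{1}{2}(1-\delta)a_{k_0}b_{i_0}c_{j_0}e_{l_0}$. Since $AB=\delta BA$ forces this polynomial to vanish for almost every $t\in P_r$ and $\mu(P_r)>0$, all of its coefficients vanish; in particular $(1-\delta)a_{k_0}b_{i_0}c_{j_0}e_{l_0}=0$, and as $\delta\neq 1$ we obtain $a(t)b(t)c(t)e(t)=0$ for almost every $t\in P_r$. Since $r$ was arbitrary and $[\alpha,\beta]=\bigcup_r P_r$ up to a null set, $abce=0$ almost everywhere, as required. (Alternatively one runs the induction on $r$ from the proof of Proposition \ref{propOpIntVoltTypeSimple}, absorbing the $\int_\alpha^{t_{r-1}}$ contributions into the history functions $\varphi$, $\psi$ there.) I expect the only genuine work to be the bookkeeping in the iterated integrals, needed to verify that the $[\alpha,t_{r-1}]$-part is constant in $t$ on $P_r$ and that the $t^2$-coefficients of $(ABx)(t)$ and of $(BAx)(t)$ both equal $\frac{1}{2}\,a(t)b(t)c(t)e(t)$ on $P_r$; these formulas are already displayed, with $n=1$, in the proof of Proposition \ref{propOpIntVoltTypeSimple}.
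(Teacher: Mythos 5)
Your proposal is correct and follows essentially the same route as the paper: test $AB=\delta BA$ on $x\equiv 1$, restrict to a cell $P_r$ of the common refinement where all four simple functions are constant, and compare the coefficients of $t^2$ in the resulting quadratic polynomials to extract $(\delta-1)a b c e=0$ on $P_r$. Your packaging is slightly cleaner — dispatching $\delta=1$ up front and replacing the paper's induction over the cells (with its history functions $\varphi$, $\psi$) by the observation that the $\int_\alpha^{t_{r-1}}$ contribution is constant in $t$ on $P_r$ and that a polynomial vanishing a.e.\ on a set of positive measure has all coefficients zero — but the substance is identical.
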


\begin{proof}
 %First note that operators $A$ and $B$ are well defined and so are their compositions $AB$, $BA$, $A^n$ and $BA^n$, for any nonnegative integer $n$ \cite{HutsonPym}.
 Let $n\ge 1$ and we compute $AB$, $A^n$ and $BA^n$. Let $a$, $b$, $c$ and $e$ be measurable simple functions
 defined as follows
\begin{align*}
& a(t)=\sum_{k=1}^{\infty} a_k I_{D_{a_k}}(t), \quad b(t)=\sum_{i=1}^{\infty} b_i I_{D_{a_i}}(t),\\ & c(t)=\sum_{j=1}^{\infty} c_j I_{D_{c_j}}(t), \quad e(t)=\sum_{l=1}^{\infty} e_l I_{D_{e_l}}(t),
\end{align*}
where $\{D_{a_k}\}$, $\{D_{b_i}\}$, $\{ D_{c_j}\}$, $\{D_{e_l}\}$ are measurable partitions of $[\alpha,\beta]$,
$a_k$, $b_i$, $c_j$, $e_l$ are constants, $I_E$ is the indicator (characteristic) function of the set $E$.
%If $ABx=\delta BAx$ for all $x\in L_p[\alpha,\beta]$, $1<p<\infty$ then
%  by computing $(ABx)(t)$, $(BAx)(t)$ for $x(\cdot)=1$ we have
%  for almost every $t\in [\alpha,\beta]$ and by applying the dominated convergence theorem

Note that $ \bigcup\limits_{i,i,k,l=1}^{\infty}\{ D_{a_k} \cap D_{b_i} \cap D_{c_j}\cap D_{e_l} \}=\bigcup\limits_{m=1}^\infty P_m $ is a partition of $[\alpha,\beta]$. Therefore, for $x(\cdot)=1$  and $t\in P_1=D_{a_1} \cap D_{b_1} \cap D_{c_1}\cap D_{e_1}$ and by applying the dominated convergence theorem we have
\begin{align*}
& \hspace{0mm}(ABx)(t)=\hspace{0mm}\int\limits_{\alpha}^{t}  a_1c_1 I_{D_{a_1}}(t)
  I_{D_{c_1}}(s_1) %\\
\hspace{0mm}  b_1e_1 (s_1I_{D_{e_1}}(s_1)-\gamma_{e_1})I_{D_{b_1}}(s_1) ds_1\\
&=
a_1c_1b_1e_1
\big(\frac{t^2}{2} I_{D_{b_1}\cap D_{e_1}\cap D_{a_1}\cap D_{c_1} }(t)
-\frac{\gamma_{c_1,b_1,e_1}^2}{2}I_{D_{e_1}\cap D_{c_1}\cap D_{b_1}}(\gamma_{c_1,b_1,e_1})I_{D_{a_1}}(t)
\\
&
- t\cdot \gamma_{e_1}I_{D_{b_1}\cap D_{e_1}\cap D_{a_1}\cap D_{c_1} }(t)
 +\gamma_{e_l}
\gamma_{c_1,b_1,e_1}I_{D_{e_1}\cap D_{c_1}\cap D_{b_1}}(\gamma_{c_1,b_1,e_1})I_{D_{a_1}}(t)\big)
\\
&
= a_1c_1b_1e_1
\big(\frac{t^2}{2} I_{D_{b_1}\cap D_{e_1}\cap D_{a_1}\cap D_{c_1} }(t)
-\frac{\gamma_{c_1,b_1,e_1}^2}{2}I_{D_{e_1}\cap D_{c_1}\cap D_{b_1}}(\gamma_{c_1,b_1,e_1})I_{D_{a_1}}(t)
\\
&
- t\cdot \gamma_{e_1}I_{D_{b_1}\cap D_{e_1}\cap D_{a_1}\cap D_{c_1} }(t)
  +\gamma_{e_1}
\gamma_{c_1,b_1,e_1}I_{D_{e_1}\cap D_{c_1}\cap D_{b_1}}(\gamma_{c_1,b_1,e_1})I_{D_{a_1}}(t)\big),
\end{align*}
where $\gamma_{e_l}=\inf D_{e_l}$, $\,\gamma_{c_j,b_i, e_l}=\left\{\begin{array}{cc}
 \inf \{D_{c_j}\cap D_{b_i}\cap D_{e_l}\},\ & \mbox{ if } \mu(D_{c_j}\cap D_{b_i}\cap D_{e_l})>0\\
 \beta +1,         &   \mbox{otherwise }.
\end{array}  \right.$

Similarly,
\begin{align*}
& \hspace{0mm}(BAx)(t)=\hspace{0mm}\int\limits_{\alpha}^{t}  b_1e_1 I_{D_{b_1}}(t)
  I_{D_{e_1}}(s_1) %\\
\hspace{0mm}  a_1c_1 (s_1I_{D_{a_1}}(s_1)-\gamma_{c_1})I_{D_{a_1}}(s_1) ds_1\\
&=
a_1c_1b_1e_1
\big(\frac{t^2}{2} I_{D_{b_1}\cap D_{e_1}\cap D_{a_1}\cap D_{c_1} }(t)
-\frac{\gamma_{c_1,a_1,e_1}^2}{2}I_{D_{e_1}\cap D_{c_1}\cap D_{a_1}}(\gamma_{c_1,a_1,e_1})I_{D_{b_1}}(t)
\\
&
- t\cdot \gamma_{c_1}I_{D_{b_1}\cap D_{e_1}\cap D_{a_1}\cap D_{c_1} }(t)
 +\gamma_{c_1}
\gamma_{c_1,a_1,e_1}I_{D_{e_1}\cap D_{c_1}\cap D_{a_1}}(\gamma_{c_1,a_1,e_1})I_{D_{b_1}}(t)\big)
\\
&
= a_1c_1b_1e_1
\big(\frac{t^2}{2} I_{D_{b_1}\cap D_{e_1}\cap D_{a_1}\cap D_{c_1} }(t)
-\frac{\gamma_{c_1,a_1,e_1}^2}{2}I_{D_{e_1}\cap D_{c_1}\cap D_{a_1}}(\gamma_{c_1,a_1,e_1})I_{D_{b_1}}(t)
\\
&
- t\cdot \gamma_{c_1}I_{D_{b_1}\cap D_{e_1}\cap D_{a_1}\cap D_{c_1} }(t)
 +\gamma_{c_1}
\gamma_{c_1,a_1,e_1}I_{D_{e_1}\cap D_{c_1}\cap D_{a_1}}(\gamma_{c_1,a_1,e_1})I_{D_{b_1}}(t)\big),
\end{align*}
where $\gamma_{c_j}=\inf D_{c_j}$, $\,\gamma_{c_j,a_i, e_l}=\left\{\begin{array}{cc}
 \inf \{D_{c_j}\cap D_{a_k}\cap D_{e_l}\},\ & \mbox{ if } \mu(D_{c_j}\cap D_{a_k}\cap D_{e_l})>0\\
 \beta +1,         &   \mbox{otherwise }.
\end{array}  \right.$
Therefore,
\begin{equation*}
 a_1c_1b_1e_1I_{D_{b_1}\cap D_{e_1}\cap D_{a_1}\cap D_{c_1} }(t)=\delta  a_1c_1b_1e_1I_{D_{b_1}\cap D_{e_1}\cap D_{a_1}\cap D_{c_1} }(t),
\end{equation*}
for almost every $t$, which implies $(\delta-1) a_1c_1b_1e_1 I_{D_{b_1}\cap D_{e_1}\cap D_{a_1}\cap D_{c_1} }(t)=0$ for almost every $t$.
By applying the same procedure as in the proof of Proposition \ref{propOpIntVoltTypeSimple}, we get
\begin{equation*}
\sum_{k,i,j,l=1}^{\infty} a_kc_jb_ie_lI_{D_{b_i}\cap D_{e_l}\cap D_{a_k}\cap D_{c_j} }(t)=\delta \sum_{k,i,j,l=1}^{\infty} a_kc_jb_ie_lI_{D_{b_i}\cap D_{e_l}\cap D_{a_k}\cap D_{c_j} }(t),
\end{equation*}
for almost every $t$, which implies $(\delta-1)\sum\limits_{k,i,j,l=1}^{\infty} a_kc_jb_ie_l I_{D_{b_i}\cap D_{e_l}\cap D_{a_k}\cap D_{c_j} }(t)=0$ for almost every $t$. This is equivalent to the set supp$\{(\delta-1)abce\}$ to have measure zero.
\qed
\end{proof}

\begin{proposition}\label{PropCommutativityVolterraOpZerobothSides}
Let $1<p<\infty$, and the linear operators $A:L_p(\mathbb{[\alpha,\beta]})\to L_p (\mathbb{[\alpha,\beta]})$ and  $B:L_p(\mathbb{[\alpha,\beta]})\to L_p(\mathbb{[\alpha,\beta]})$ be defined, for almost every $t\in[\alpha,\beta]$, by
\begin{gather*}
  (Ax)(t)= \int\limits_{\alpha}^{t} k_A(t,s)x(s)ds,\quad (Bx)(t)= \int\limits_{\alpha}^{t} {k}_B(t,s)x(s)ds,
\end{gather*}
where $\alpha,\gamma\in \mathbb{R}$, $\alpha\le t\le \beta$, and $k_A:[\alpha,\beta]^2\to \mathbb{R}$ and ${k}_B:[\alpha,\beta]^2\to \mathbb{R}$ are measurable functions.
Then $AB= BA=0$ if and only if the sets
  \begin{align}
    & {\rm supp}\, g_A \cap {\rm supp }\, g_B, \label{CondiZeroDiversorsVolterraOperatorgAgB}\\
    &{\rm supp}\, h_A \cap {\rm supp }\, h_B \label{CondiZeroDiversorsVolterraOperatorhAhB}\\
    & g_A:[\alpha,\beta]^3\to \mathbb{R},\ g_B:[\alpha,\beta]^3\to \mathbb{R},  \nonumber\\
    & h_A:[\alpha,\beta]^3\to \mathbb{R},\ h_B:[\alpha,\beta]^3\to \mathbb{R} \nonumber \\
    & g_A(t,s,\tau)=k_A(t,s)\ g_B(t,s,\tau)=k_B(s,\tau), \nonumber \\
    & h_A(t,s,\tau)=k_A(s,\tau),\ g_B(t,s,\tau)=k_B(t,s) \nonumber
  \end{align}
  have measure zero in $\mathbb{R}^3$.
\end{proposition}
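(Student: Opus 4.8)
The plan is to reduce the statement to a pointwise condition on the kernels and then treat the two directions separately. First I would compute the kernels of the composed Volterra operators exactly as in the proof of Theorem~\ref{ThmVolterraOpCommutativityCond}. Interchanging the order of integration over the simplex $\{\alpha\le\tau\le s\le t\}$ by Fubini's theorem (\cite{AdamsG}) gives, for almost every $t$,
\[
(ABx)(t)=\int\limits_{\alpha}^{t}k_{AB}(t,\tau)x(\tau)\,d\tau,\qquad
(BAx)(t)=\int\limits_{\alpha}^{t}k_{BA}(t,\tau)x(\tau)\,d\tau,
\]
where $k_{AB}(t,\tau)=\int_{\tau}^{t}k_A(t,s){k}_B(s,\tau)\,ds$ and $k_{BA}(t,\tau)=\int_{\tau}^{t}{k}_B(t,s)k_A(s,\tau)\,ds$ for $\alpha\le\tau\le t\le\beta$ (and $\equiv 0$ for $\tau>t$). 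Applying Lemma~\ref{lemEqIntForAllLpFunctFinitMeasureA} with the second function taken identically zero, $AB=0$ is equivalent to $k_{AB}=0$ almost everywhere on the triangle $\{\alpha\le\tau\le t\le\beta\}$, and $BA=0$ is equivalent to $k_{BA}=0$ there. Thus the proposition splits into two equivalences of identical shape, one for the pair $(g_A,g_B)$ and one for $(h_A,h_B)$, and it suffices to prove ``$AB=0 \Longleftrightarrow {\rm supp}\,g_A\cap{\rm supp}\,g_B$ is null in $\mathbb{R}^3$''.

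The ``if'' direction is routine. Since $g_A$ depends only on $(t,s)$ and $g_B$ only on $(s,\tau)$, we have ${\rm supp}\,g_A\cap{\rm supp}\,g_B=\{(t,s,\tau)\in[\alpha,\beta]^3:k_A(t,s)\ne0,\ {k}_B(s,\tau)\ne0\}$. If this set is null, then by Fubini, slicing in the middle variable $s$, for almost every $(t,\tau)$ the set $\{s:k_A(t,s)\ne0\ \text{and}\ {k}_B(s,\tau)\ne0\}$ is a null subset of $[\alpha,\beta]$; hence $s\mapsto k_A(t,s){k}_B(s,\tau)$ vanishes almost everywhere, so $k_{AB}(t,\tau)=0$ and $AB=0$. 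The identical computation applied to $h_A,h_B$ yields $BA=0$.

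The ``only if'' direction is where the real work lies: one must deduce from $k_{AB}\equiv 0$ a.e.\ that $\{(t,s,\tau):k_A(t,s)\ne0,\ {k}_B(s,\tau)\ne0\}$ is null. The natural strategy imitates the proof of Proposition~\ref{propOpIntVoltTypeSimple}: approximate $k_A,{k}_B$ by simple kernels, fix an ordered measurable partition $\{P_m\}$ of $[\alpha,\beta]$, and process the cells $P_1,P_2,\dots$ in increasing order, at each step differentiating the Volterra integral $k_{AB}(t,\tau)=\int_{\tau}^{t}k_A(t,s){k}_B(s,\tau)\,ds$ at the left endpoint of the next cell in order to isolate the contribution of $k_A{k}_B$ on that cell and conclude it is null there.

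The step I expect to be hardest is controlling cancellation. Because $k_{AB}(t,\tau)$ is an integral in the middle variable $s$, a sign change of $k_A$ (or of ${k}_B$) in $s$ can make this integral vanish identically in $(t,\tau)$ while the integrand $k_A(t,s){k}_B(s,\tau)$ is far from being a.e.\ zero, so $k_{AB}\equiv0$ does not by itself force the product of supports to be null. Excluding this for arbitrary signed measurable kernels seems to require either an additional structural hypothesis on $k_A,{k}_B$ (for instance a fixed sign, or the simple-function form used in Proposition~\ref{propOpIntVoltTypeSimple}) or a reformulation of the support sets relative to the triangular Volterra region; identifying precisely which extra assumption makes the implication go through is, in my view, the crux of the argument and the point I would check most carefully.
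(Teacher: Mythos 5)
Your reduction coincides exactly with the paper's: Fubini gives $(ABx)(t)=\int_\alpha^t\big(\int_\tau^t k_A(t,s)k_B(s,\tau)\,ds\big)x(\tau)\,d\tau$, and Lemma~\ref{lemEqIntForAllLpFunctFinitMeasureA} converts $AB=0$ into the vanishing of $k_{AB}(t,\tau)=\int_\tau^t k_A(t,s)k_B(s,\tau)\,ds$ for almost every $(t,\tau)$ with $\tau\le t$, and likewise for $BA$. Your ``if'' direction (null support set $\Rightarrow$ the integrand vanishes a.e.\ on almost every $s$-slice $\Rightarrow$ $k_{AB}\equiv 0$) is correct and is all that sufficiency requires. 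At the point where you stopped, the paper does no additional work: its entire treatment of the converse is the one sentence asserting that ``$k_{AB}(t,\tau)=0$ for a.e.\ $(t,\tau)$'' is \emph{equivalent} to the set \eqref{CondiZeroDiversorsVolterraOperatorgAgB} being null in $\mathbb{R}^3$, with no argument for the nontrivial implication.

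Your worry about cancellation is justified: that implication is false for general signed measurable kernels, so the step you left open cannot be closed without an extra hypothesis. On $[\alpha,\beta]=[0,1]$ take $k_A(t,s)=I_{[1/2,1]}(t)\,I_{[1/4,1/2]}(s)$ and $k_B(s,\tau)=I_{[1/4,1/2]}(s)\sin\big(8\pi(s-\tfrac14)\big)\,I_{[0,1/8]}(\tau)$. Then $(Bx)(s)=I_{[1/4,1/2]}(s)\sin\big(8\pi(s-\tfrac14)\big)\int_0^{1/8}x(\tau)\,d\tau$, so $(ABx)(t)=I_{[1/2,1]}(t)\big(\int_{1/4}^{1/2}\sin\big(8\pi(s-\tfrac14)\big)ds\big)\int_0^{1/8}x(\tau)\,d\tau=0$; moreover $(Ax)(s)=0$ for $s<1/2$ while $B$ integrates only over $[0,1/8]$, so $BA=0$ as well, with $A\neq 0$ and $B\neq 0$. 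Yet $\{(t,s,\tau):k_A(t,s)\neq0,\ k_B(s,\tau)\neq0\}$ contains $[1/2,1]\times\big([1/4,1/2]\setminus\{\sin=0\}\big)\times[0,1/8]$, a set of measure $1/64$ lying inside the simplex $\tau\le s\le t$, so \eqref{CondiZeroDiversorsVolterraOperatorgAgB} is not null and the stated necessity fails. (A secondary defect you implicitly touch on: the support sets are taken in all of $[\alpha,\beta]^3$, so values of $k_A,k_B$ on $\{s>t\}$, which the operators never use, also enter the condition.) The equivalence does hold under additional structure --- for instance when $k_A(t,s)k_B(s,\tau)$ has constant sign in $s$, so that a vanishing integral forces a vanishing integrand, or in the separable simple-function setting of Proposition~\ref{propOpIntVoltTypeSimple} --- and identifying such a hypothesis is, as you say, the real content; as written, both your proposal and the paper establish only the sufficiency half.
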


\begin{proof}
By Fubini Theorem, changing the order of integration, we get
  \begin{gather*}
     (ABx)(t)=\int\limits_{\alpha}^{t} k_A(t,s)\big(\int\limits_{\alpha}^{s}{k}_B(s,\tau)x(\tau)d\tau\big)ds
    =\int\limits_{\alpha}^{t}x(\tau) \big(\int\limits_{\tau}^{t}k_A(t,s){k}_B(s,\tau)ds\big)d\tau
  \end{gather*}
  for almost every $t\in [\alpha,\beta]$. Then, $ABx=0$ for all $x\in L_p[\alpha,\beta]$ if and only if
  \begin{gather*}
    \int\limits_{\tau}^{t}k_A(t,s){k}_B(s,\tau)ds=0
  \end{gather*}
  for almost every $(t,\tau)\in[\alpha,\beta]^2$. This is equivalent to  the set in \eqref{CondiZeroDiversorsVolterraOperatorgAgB} having
  measure zero in $\mathbb{R}^3$.
 Applying the same idea to the operator $BA$, we get that $BA=0$ if and only if the set in
  \eqref{CondiZeroDiversorsVolterraOperatorhAhB}
  has measure zero in $\mathbb{R}^3$.
\end{proof}

\begin{example}
Let $1<p<\infty$, and the linear operators $A:L_p[0,1]\to L_p[0,1]$ and $B:L_p[0,1]\to L_p[0,1]$  be defined, for almost for all $t$, by
\begin{align*}
& (Ax)(t)=\int\limits_{0}^{t} a(t)c(s)x(s)ds, \quad  (Bx)(t)=\int\limits_{0}^{t} b(t)e(s)x(s)ds,\\
& a(t)=I_{[0,1/4]}(t)(t^4+1)-I_{[1/2,3/4]}(t), \quad b(t)=I_{[1/2,1]}(t), \\
& c(t)=I_{[0,1/2]}(t), \quad e(t)=I_{[1/4,1/2]}(t)(t^2+1)+I_{[3/4,1]}(t),
   \end{align*}
 where $I_\Lambda$ denotes the indicator function of the set $\Lambda$. Since
$ \mbox{\rm supp\,} \{ a(t)b(s)c(s)e(\tau) \}$ has measure zero in $\mathbb{R}^3$ and the set
$\mbox{\rm supp\,} \{b(t)e(s)a(s)c(\tau)\} $ has measure zero in $\mathbb{R}^3$,
Proposition \ref{PropCommutativityVolterraOpZerobothSides} yields
 $AB=BA=0$.
\end{example}

\section*{Acknowledgments}
This work was supported by the Swedish International Development Cooperation Agency (Sida), bilateral capacity development program in Mathematics with Mozambique. Domingos Djinja is grateful to the Mathematics and Applied Mathematics research environment MAM, Division of Mathematics and Physics, School of Education, Culture and Communication, M\"alardalen University for excellent environment for research in Mathematics.

%\printindex
%%\addcontentsline{toc}{chapter}{Index}
%\printindex[aut]
%\addcontentsline{toc}{chapter}{Author Index}

\end{document}